\newtheorem{theorem}{Theorem}[section]
\newtheorem{lemma}[theorem]{Lemma}
\newtheorem{question}{Question}
\newtheorem{proposition}[theorem]{Proposition}
\newtheorem{claim}[theorem]{Claim}
\theoremstyle{definition}
\newtheorem{remark}[theorem]{Remark}
\def\done{{1\hskip-2.5pt{\rm l}}}
\def\La{{\Lambda}}
\def\la{{\lambda}}
\newcommand{\bR}{\mathbb R}
\newcommand{\bC}{\mathbb C}
\newcommand{\bZ}{\mathbb Z}
\newcommand{\bD}{\mathbb D}
\newcommand{\bT}{\mathbb T}
\newcommand{\bP}{\mathbb P}
\newcommand{\bE}{\mathbb E}
\newcommand{\aV}{\mathcal{V}}
\newcommand{\say}[1]{``#1''}
\renewcommand{\limsup}{\mathop{\overline{\lim}}}
\renewcommand{\Re}{\mathrm{Re}\,}
\numberwithin{equation}{section}
\begin{document}

\title{ \vspace{-20pt} The random Weierstrass zeta function I. \\
Existence, uniqueness, fluctuations}
\author{Mikhail Sodin \and
Aron Wennman \and Oren Yakir}

\maketitle

\begin{abstract}
We describe a construction of random meromorphic functions
with prescribed simple poles with unit residues at a given stationary point process.
We characterize those stationary processes with finite second moment for which,
after subtracting the mean, the random function becomes stationary.
These random meromorphic functions can be viewed as random analogues
of the Weierstrass zeta function from the theory of elliptic functions, or
equivalently as electric fields generated by an
infinite random distribution of point charges.
\end{abstract}

\section{Introduction and overview}
\label{s:intro}
Let $\La$ be a stationary random point process in $\bR^d$, $d\ge 2$, and let
$n_\Lambda = \sum_{\la\in\La} \delta_\lambda$ be its counting measure. 
We take the probability space of $\La$ to be
$(\Omega,\mathcal{F},\bP)$, where $\Omega$ is the space of
locally finite configurations in $\bR^d$ and $\mathcal{F}$ is the $\sigma$-algebra
generated by the events
\[
\big\{\La\in\Omega:n_\La(B)=k\big\}, \quad k\in\bZ_{\ge 0},\quad B \text{ Borel subset of } \bR^d.
\] 
Stationarity of $\La$ amounts to invariance of $\bP$ under translations, i.e.\ 
under the maps $T_x:\Omega\to\Omega$, where $T_x\La=\La-x$.
Denote by $c_\Lambda$ the (first) intensity of $\Lambda$, i.e., assume that
$\bE[n_\Lambda]=c_\Lambda m$, where $m$ is the Lebesgue measure, and consider
the following question:

\begin{question}\label{quest1}
For which stationary point processes $\La$ does there exist a stationary random vector field
$V_\La$ with $\operatorname{div} V_\Lambda = n_\La - c_\La m$ in the sense of distributions?
\end{question}

Probably, the first instance of such a field is due to Chandrasekhar, 
who noted in~\cite[Ch.~IV]{Chandra} that, for the Poisson point process $\La$ in $\bR^3$,
the stationary vector field $V_\La$ can be defined by the regularized series
\[
V_\La (x) = \lim_{R\to\infty}\, \sum_{|\la|\le R} \frac{x-\la}{|x-\la|^3} - \kappa c_\La x,
\]
where the summation is over $\la\in\La$ (here and elsewhere,
we skip $\La$ under the summation sign), and $\kappa = 4\pi /3$ is the
volume of the unit ball in $\bR^3$. Chatterjee-Peled-Peres-Romik~\cite[Proposition~1]{CPPR}
gave the rigorous proof of this for the Poisson point process in $\bR^d$ with
$d \ge 3$.

On the other hand, such a stationary field (with a very mild regularity) does not exist
for the planar Poisson process. This follows from Theorem~\ref{thm2} below
but, probably, is not news for experts. Well-studied examples of stationary planar point
processes possessing stationary vector fields are the limiting Ginibre ensemble
and the zero set of the Gaussian Entire Function~\cite{HKPV, NS-WhatIs}.
For the limiting Ginibre ensemble this also follows from Theorem~\ref{thm2} and, likely,
this is known to experts. For the zero set of the Gaussian Entire Function $F(z)$,
the field $V_\Lambda$ written in complex coordinates is nothing but $(F'/F)(z) - \bar z$ 
which is the complex gradient of the stationary potential 
$\log |F(z)|^2 - |z|^2$.
Plausibly, the stationary field exists for two- and three-dimensional Coulomb-type charged 
systems studied by physicists and mathematicians; see the survey 
papers~\cite{Martin,  SerfatySurv1, SerfatySurv2, Lewin} and the references therein.

Since the higher dimensional version of the question does not bring any
essentially new difficulties comparing with the planar 
case\footnote{In $\bC$ we deal with series of simple fractions
$\frac{1}{z-\la}$, $\la\in\La$. Note that, for a two-dimensional vector field $V(z)$
written in complex notation, the divergence is nothing but $\frac1\pi \partial_{\bar z} V(z)$ and that
$\partial_{\bar z}\frac{1}{z-\la}=\pi\delta_\la$. Similarly, in $\bR^d$ with $d\ge 3$, one needs
to deal with series of translates of simple vector fields $\nabla\frac{1}{|x|^{d-2}}=-(d-2)\frac{x}{|x|^{d}}$.},
we will concentrate on the latter. In this case, the question becomes equivalent to the following one:

\begin{question}\label{quest2}
For which stationary planar point processes $\Lambda$ does there exist
a random meromorphic function $f_\Lambda$
with poles exactly at $\Lambda$, all simple with unit residue,
such that the random function $f_\La (z) - \pi c_\La \bar z$
is stationary?
\end{question}

In this paper we will provide an answer to Question~\ref{quest2}
for point processes with a finite second moment, i.e., $\bE[n_\La(B)^2]<\infty$ for
any bounded Borel set $B\subset \bC$. Such stationary point processes admit a 
\emph{spectral measure} $\rho_\La$; see Section~\ref{s:prel} below for the details and examples.
In Section~\ref{s:random-zeta}, we will construct a random analogue $\zeta_\La$
of the Weierstrass zeta function from the theory of elliptic functions.
The function $\zeta_\La$ is meromorphic with poles exactly at $\La$, 
all simple with unit residue, but in general it is not stationary.
One of our findings is Theorem~\ref{thm2} below. In the simplifying case when
the point process $\La$ is ergodic, it states
that the following three conditions are equivalent:
\begin{enumerate}
\item[(i)] The spectral condition
$\displaystyle{\int_{|\xi|\le 1}\frac{{\rm d}\rho_\La(\xi)}{|\xi|^2}<\infty}$
holds.
\item[(ii)] The sum $\displaystyle{\sum_{\la\in\La,\,1\le|\la|\le R}\la^{-1}}$ converges
in $L^2(\Omega,\bP)$ as $R\to\infty$.
\item[(iii)]
For some random constant
$\Psi\in L^2(\Omega,\bP)$ the field
$\zeta_\La(z)-\Psi-\pi c_\La\bar{z}$ is stationary.
\end{enumerate}
Moreover, any solution $f_\La$ to the problem in Question~\ref{quest2} 
with some very mild regularity (e.g., $\bE[|f_\La(0)|]<\infty$) coincides with $\zeta_\La-\Psi$ 
up to a (deterministic) constant, 
so the field in {\rm (iii)} is essentially unique; see Theorem~\ref{thm3} 
and Remark~\ref{remark:ergodic_implies_independence}.
We also note that correcting by the random
constant in condition (iii) is in fact necessary, 
and the natural choice of $\Psi$ is given in Lemma~\ref{lem3}.

The spectral condition {\rm (i)} can be thought of as a \emph{sum rule} for the
two-point measure of $\Lambda$, cf.\ Remark~\ref{rem-cond-kappa} below.

Curiously, if we do not impose any regularity on $f_\La$,
the answer to Question~\ref{quest2} is always positive. To show this one can
use Weiss' construction \cite{Weiss} or a modification of the Krylov-Bogoliubov averaging 
construction for invariant measures \cite{BGLS}. However, when the spectral
condition {\rm (i)} fails, $f_\La$ is necessarily quite \say{exotic} with wild growth at infinity
and very heavy tails, cf.\ \cite{BGLS}.

As an application of the ideas developed here, we study in
\cite{SWY-II} the variance of line integrals of $f_\La$ along dilated
rectifiable curves $R\,\Gamma$ in the large-$R$ limit. When $\Gamma$ encloses a Jordan
domain $\Omega$, this coincides with the \say{charge fluctuation} in $R\,\Omega$,
which is a classical quantity in mathematical physics.
See Remark~\ref{rem-argument} for a further discussion.

Before we proceed, a few words about definitions are in order. 
By a stationary random vector field we mean a measurable map 
$\La\mapsto V_\La$ of $\Omega$ into the space
of (Borel) measurable functions on $\bC$ taking values in the extended complex 
plane $\widehat{\bC}$, such that for all $z\in\bC$,
\[
V_{T_z\La}(\cdot)=V_\La(\cdot + z).
\] 
Equivalently, $V_\La$ is a stationary random vector field if it takes 
the form $V_\Lambda(z)=F(T_z \Lambda)$ 
for some measurable function $F:\Omega\to\widehat{\bC}$.

\paragraph{Related work.}
There is a certain resemblance between the questions studied here
and several well-studied problems.
Among these is the classical question about the growth of the variance of sums and integrals
of stationary processes which boil down to the existence (better to say, non-existence) of
stationary primitives of stationary processes. This was studied by Robinson~\cite{Robinson},
Leonov~\cite{Leonov}, and Ibragimov-Linnik~\cite[Ch XVIII, \S 2, 3]{IbrLinnik} for
stationary processes on $\bZ$ and on $\bR$, and by Davidovich~\cite{Dav}
for stationary processes on $\bZ^d$ and $\bR^d$, $d\ge 2$.
Aizenmann-Goldstein-Lebowitz~\cite[Theorem~3.1]{AGL} gave a criterion for the
existence of a stationary primitive for a stationary point process on $\bR$.
The relevant ergodic theoretic result is Schmidt's
coboundary theorem~\cite[Theorem~11.8]{Schmidt}.

In physics papers, Lebowitz-Martin~\cite{Leb-Martin}
and Alastuey-Jancovici~\cite{Al-Janc}
among other things computed the spectral measure
and the reduced covariance measure for the field and
potential of two- and three-dimensional Coulomb-type systems.

Questions pertaining to existence and uniqueness of
stationary solutions to stochastic PDE (see, for instance,
Vergara-Allard-Desassis~\cite{Vergara})
also belong to this circle of problems.

\paragraph{Wide-sense stationary point processes.}
The main tool in the proofs of the most of our results
will be the spectral theory of generalized point processes,
developed in the 1950-ies by It\^{o}, Gelfand, and Yaglom.
The proofs will not use the translation-invariance of the distribution
of the point process in its full strength, but rely
only on the translation-invariance of the mean and of
the correlations of the point process.
For this reason, with some obvious modifications,
the corresponding results remain valid
for wide-sense stationary point processes $\La$.

\paragraph{Organization.}
The article is organized as follows. In Section~\ref{s:prel}, we discuss the
notion of the spectral measure and some surrounding preliminaries.
Here we mention in some detail the main examples we kept in mind during this work.
In Section~\ref{s:lemmas} we analyze the convergence
properties of reciprocal sums over $\Lambda$, e.g.\
$\sum_{1\le |\lambda-z|\le R}\frac{1}{\lambda^j}$
for $j=1,2$,
as well as their behavior under translations of the center $z$ of summation.
These sums play a central role
in the construction of the random Weierstrass function, which
is carried out in Section~\ref{s:stat-increments}.
This overall scheme works for
any point process, but in general the field obtained only has stationary increments.
In Section~\ref{s:stat} we discuss the existence, uniqueness and covariance structure
of the invariant field $V_\Lambda$ under the above-mentioned spectral condition.
In Section~\ref{s:potential} we conclude with a discussion of
the existence and covariance structure of
random potentials, that is, solutions to the equation
$\Delta \Pi_\La = 2\pi (n_\La - c_\La m)$.

\paragraph{Notation.}
We use the following notation frequently.
\begin{itemize}
\setlength\itemsep{-.3em}
\item $\bC$, $\bR$; the complex plane and the real line
\item $\bD$; the unit disk $\{|z|\le 1\}$
\item $\partial=\partial_z$ and $\bar\partial
=\partial_{\bar z}$; the Wirtinger derivatives
\[
\partial=\frac12\left(\frac{\partial}{\partial x}-{\rm i}\frac{\partial}{\partial y}\right),\qquad
\bar\partial=\frac12\left(\frac{\partial}{\partial x}+{\rm i}\frac{\partial}{\partial y}\right)
\]
\item $m$; the Lebesgue measure on $\bC$
\item $\widehat{f}$; the Fourier transform, with the normalization
\[
\widehat{f}(\xi)=\int_{\bC}\,e^{-2\pi {\rm i}x\cdot \xi}f(x)\,{\rm d}m(x)
\]
\item $\mathfrak D$, $\mathcal S$; the class of compactly supported $C^\infty$-smooth functions
and the class of Schwartz functions, respectively
\item $\bE$, ${\sf Cov}$, ${\sf Var}$; the expectation, covariance and variance (with respect to
an underlying probability space $(\Omega,\mathcal{F},\bP)$)
\item $\mathcal{F}_{\sf{inv}}$; the sigma-algebra of translation-invariant events
\item $T_a$; translation by $a\in\bC$, acting on functions by $T_af(z)=f(z+a)$
and on sets by $T_aS=\{s-a:s\in S\}$
\item $\delta_z$; unit point mass at $z\in\bC$
\item $\rho_\La$; the spectral measure of the point process $\La$
\item $\kappa_\La$, $\tau_\La$; the truncated and reduced
truncated covariance measures for $\La$, respectively.
\end{itemize}
Oftentimes, we will treat sums and series where the summation variable
ranges over a point process $\La$. When this is clear from the context,
we will abuse notation slightly and simply write
\[
\sum_{|\la|\le R}h(\la)= \sum_{\lambda\in \La\cap R\,\bD}h(\la).
\]

We use the standard Landau $O$-notation and the symbol $\lesssim$ interchangeably.
For limiting procedures involving an auxiliary parameter $a$, we use the notation
$f_a(x)=O_a(g(x))$ to indicate that the implicit constant may depend on $a$.

\section{The second-order structure of stationary point processes}
\label{s:prel}

\subsection{The spectral measure}
\label{ss:spectral}
Let $\La$ be a stationary point process in $\bC$ with a finite second moment, that is, we assume that
$\bE[n_\La(B)^2]<\infty$
for any bounded Borel set $B$.
{\em The spectral measure} of $\La$ is a non-negative locally finite measure $\rho_\La$ on
$\bC$ such that the ``Parseval formula'' holds:
\begin{equation}\label{eq21}
{\sf Cov}\bigl[n_\La (\varphi), n_\La(\psi) \bigr]
= \int_{\bC} \widehat \varphi(\xi) \overline{\widehat{\psi}(\xi)}\, {\rm d}\rho_{\La}(\xi)
= \langle \widehat{\varphi}, \widehat{\psi} \rangle_{L^2(\rho_\La)}\,,
\end{equation}
where $\varphi, \psi\in \mathfrak{D}$, $n_\La(\varphi)$ denotes the linear statistic
\[
n_\La (\varphi) = \sum_{\la\in\La} \varphi(\la)\,,
\]
and $\widehat{\varphi}$, $\widehat{\psi}$ are the Fourier transforms, i.e.,
\[
\widehat{\varphi}(\xi) = \int_{\bC} e^{-2\pi {\rm i} \xi \cdot x}\varphi (x)\,
{\rm d}m(x)\, .
\]
Existence of the spectral measure follows from a version of
the Bochner theorem, see Gelfand-Vilenkin~\cite[Ch~III, \S3]{GV}.
In the physics literature the spectral measure is commonly assumed to have a density,
known as the {\em the structure function}.

Similarly, one defines the spectral measure for stationary random measures,
as well as for generalized stationary
random processes (stationary random distributions).

\medskip
It is also worth mentioning that the spectral measures of stationary point processes
(as well as of stationary random measures) are
always {\em translation-bound\-ed}~\cite[Ch.~8]{DV-J}, that is,
for every $r>0$,
$ \sup_{z\in\bC} \rho_\La \bigl( \{\xi\colon |\xi-z|\le r \} \bigr) < \infty $.
Hence, for every $a>2$,
\begin{equation}\label{eq25}
\int_{\bC} \frac{{\rm d}\rho_\La (\xi)}{1+|\xi|^a} < \infty\,.
\end{equation}

\begin{remark}\label{rem:L2-extension}
While the formula \eqref{eq21} initially holds for $\varphi,\psi\in\mathfrak D$,
it readily extends to more general test
functions (and even some tempered distributions) by taking
the closure in $L^2(\rho_\La)$. In fact, the Fourier image of $\mathfrak{D}$
is dense in $L^2(\rho_\La)$.
This follows from the fact that $\mathfrak{D}$ is dense in the space $\mathcal{S}$
of Schwartz functions and that
the Fourier transform is
a topological isomorphism of $\mathcal{S}$. But in $\mathcal{S}$ any
convergent sequence is bounded by $C(1+|\xi|^2)^{-2}$, so applying
the bounded convergence theorem, we find that the relation
\eqref{eq21} holds for any pair of test functions $\varphi,\psi \in \mathcal{S}$.
To see that the Fourier image is dense in the full $L^2$-space,
it is sufficient to show that the closure of $\mathcal{S}$ in $L^2(\rho_\La)$
contains any bounded continuous
function $f\in L^2(\rho_\La)$ with compact support.
But this is again a direct consequence of the translation-boundedness of
$\rho_\La$ and the bounded convergence theorem applied to $f_j=f*h_j$, where 
$h_j(\xi)=j^{2} h(j\xi)$, $h\in\mathcal{S}$, $\int_{\bC} h{\rm d}m=1$, as $j\to\infty$.
\end{remark}

\subsection{The reduced covariance measure}
The spectral measure of a point process is the Fourier transform of ``the reduced
covariance measure'' $\kappa_\La$, which is a signed measure on $\bC$ such that
\begin{align*}
{\sf Cov}\bigl[ n_\La (\varphi), n_\La(\psi) \bigr] &=
\iint_{\bC\times\bC} \varphi(z) \overline{\psi (z')}\, {\rm d}\kappa_\La (z-z')\, {\rm d}m(z) \\
&=\int_\bC \Bigl[ \int_\bC \varphi (z) \overline{\psi (z-w)}\,
{\rm d}m(z) \Bigr]\, {\rm d}\kappa_\La (w)\,,
\end{align*}
see Daley and Vere--Jones~\cite[Ch.~8]{DV-J} (their notation is slightly
different from the one we use here). Recalling that
\[
\bE\bigl[n_\La (\varphi) n_\La(\overline\psi) \bigr] =
\iint_{\bC\times\bC} \varphi(z)\overline{\psi(z')}\, {\rm d}\nu_\La (z-z')\, {\rm d}m(z)
+ c_\La\, \int_\bC \varphi(z)\overline{\psi(z)}\, {\rm d}m(z)\,,
\]
where $c_\Lambda$ is the first intensity of the
point process $\Lambda$ (i.e., the mean number of points
of $\Lambda$ per unit area) and $\nu_\La$ is
the reduced two-point measure of $\La$, we get
that $\kappa_\La = \tau_\La  + c_\La\delta_0$, where
$\tau_\La = \nu_\La  - c_\La^2\, m $ is the
(reduced) truncated two-point measure of $\La$.
Note that in the physics literature it is tacitly assumed that
the measures $\nu_\La$ and $\tau_\La$ have
densities, called the two-point function and truncated
two-point function, respectively.

Similarly, the reduced covariance measure is defined for random
stationary processes and random stationary measures in $\bC$.

\medskip
The total variation of any reduced covariance measure $\kappa_\La$
(and therefore of the reduced truncated measure $\tau_\La$) is
also translation-bounded~\cite[Ch.~8]{DV-J}.

\medskip
It is worth mentioning
that for many point processes the tails
of the measure $\tau_\La$ decay very fast,
which means that on high frequencies the spectral
measure is close to the Lebesgue measure $c_\La m$.
On low frequencies the behavior of the spectral measure
is governed by the Stillinger-Lovett sum rules, which control
the zeroth and the second moments of $\kappa_\La$~\cite{Martin}.

\subsection{The conditional intensity of \texorpdfstring{$\La$}{Lambda}}
We denote by $\mathcal F_{\sf inv} \subset \mathcal F$
the sigma-algebra of translation-invariant events in $\mathcal F$. The random
variable
\begin{equation}
\label{eq:frak-La}
\mathfrak c_\La 
=\pi^{-1}\bE\bigl[ n_\La (\bD) \big| \mathcal F_{\sf inv} \bigr]\,,
\end{equation}
is known as the \emph{conditional intensity} of $\La$. 
Clearly, $\bE[\mathfrak c_\La] = c_\La$, the first intensity
of the point process.
Furthermore, by the ergodic theorem, one can show that
\begin{equation}
\label{eq:ergodic-c-La}
\lim_{R\to\infty}\frac{n_\La(R\,\bD)}{\pi R^2}=\mathfrak{c}_\La
\end{equation}
both almost surely and in $L^2(\Omega,\bP)$ (see \cite[Theorem~12.2.{\rm IV}]{DV-J2}).
It is also not difficult to show (see~\cite[Exercise~12.2.9]{DV-J2}) that 
${\sf Var}[\mathfrak c_\La] = \rho_{\La} (\{0\})$. Putting these pieces together, we arrive at
\begin{equation}
\label{eq:claim1}
{\sf Var}[\mathfrak c_\La] = \rho_{\La} (\{0\})
= \lim_{R\to\infty} {\sf Var}\Bigl[ \frac{n_\La (R\,\bD)}{\pi R^2} \Bigr]\,.
\end{equation}
Hence, the random variable
$\mathfrak c_\La$ does not degenerate to the deterministic intensity $c_\La$
if and only if $n_\La (R\,\bD)$ asymptotically has the variance
of the maximal possible order $R^4$, i.e., ``hyper-fluctuates'', and this in turn
is equivalent to $\rho_\La(\{0\})>0$.
Note that if the point process $\La$ is ergodic,
then the sigma-algebra $\mathcal F_{\sf inv}$ contains
only events of probability $0$ or $1$,
and therefore,  $\mathfrak c_\La $ is constant.

The simplest example of a point process with
a spectral measure having an atom at the origin
is a random mixture of two independent
Poisson processes having different intensities
(such processes are called Cox point processes).
For a more general construction, take $\La$ to be any ergodic point process with spectral
measure $\rho_\La$, and denote by $L$
a positive non-degenerate random variable with ${\sf Var}[L]<\infty$. Then
$\La'=L^{-\frac12}\La$ is a point process
with finite second moment. 
In view of \eqref{eq:ergodic-c-La}, we get that
\[
\mathfrak{c}_{\La'}
=\lim_{R\to\infty}\frac{n_{\La'}(R\,\bD)}{\pi R^2}=
\lim_{R\to\infty}\frac{n_{\La}(L^{\frac12}R\,\bD)}{\pi R^2}
=c_\La L,
\]
and hence \eqref{eq:claim1} gives that $\rho_{\La'}(\{0\})=c_\La^2\,{\sf Var}[L]$.

\subsection{Examples}
\label{s:examples}
Here, we will make a short stop to present several examples of
stationary two-dimensional point processes, which
we kept in mind starting this work. For all these examples,
the spectral measure can be computed without much effort.

\paragraph{The Poisson point process.}
In this case, the two-point function identically equals $c_\La^2$ ($c_\La$ is
the intensity of the Poisson process), the truncated two-point function vanishes,
$\rho_\La = c_\La^2 m$, and~\eqref{eq21} is nothing but
the classical Parseval-Plancherel formula.

\paragraph{The limiting Ginibre ensemble.} This is the large $N$ limit of
the eigenvalues of the Ginibre ensemble of $N\times N$ random matrices with
independent standard complex Gaussian entries. One important feature of
the limiting ensemble is its determinantality, which means that its
$k$-point functions can be expressed in terms of the determinants
\[
r(z_1, \ldots , z_k) = \pi^{-k} e^{-\sum_{i=1}^k |z_i|^2}
\det (e^{z_i\bar z_j} )_{1\le i, j \le k}\,,
\]
see, for instance,~\cite[Section~4.3.7]{HKPV}.
This immediately yields the simple expression $-\pi^{-2}e^{-\pi |z|^2}$
for the truncated two-point function and that $c_\La=\pi^{-1}$, which, in turn, yields
that the spectral measure is absolutely continuous with the density
$\pi^{-1}(1-e^{-\pi |\xi|^2})$.

\paragraph{Zeroes of the Gaussian Entire Function.}
The Gaussian Entire Function (GEF, for short) is defined by the random Taylor series
\[
F(z) = \sum_{n\ge 0} \zeta_n \frac{z^n}{\sqrt{n!}}
\]
with independent standard complex Gaussian
coefficients $\zeta_n$. The most basic facts about
GEFs and their zeroes can be found in~\cite{HKPV, NS-WhatIs}.
The two-point function and the spectral measure of the zero point process
were explicitly computed by Forrester-Honner~\cite{FH}
and Nazarov-Sodin~\cite{NS-IMRN}. The intensity is given by $c_\La=\pi^{-1}$, 
the truncated two-point function
equals $h(|z|)$, where
\[
h(r)=\frac12\, \frac{{\rm d}^2}{{\rm d}r^2}\, r^2 (\coth r - 1)\,,
\]
while the
spectral measure is absolutely continuous with the density
\[
\pi^3 |\xi|^4\, \sum_{\ell\ge 1} \frac1{\ell^3}\, e^{-\pi^2|\xi|^2/\ell}\,.
\]

\paragraph{``Stationarized'' random Gaussian perturbation of the lattice.}
This is a stationary point process defined as
$ \La^a = \bigl\{\nu + \zeta^a_\nu +  U\bigr\}_{\nu\in\bZ^2} $,
where $\zeta^a_\nu$ are independent complex-valued
Gaussian random variables with the variance $a>0$,
and $U$ is uniformly distributed on $[0, 1]^2$ and is independent of all $\zeta^a_\nu$s.
In this case, the spectral measure is also not difficult to compute
(see, for instance, Yakir~\cite[\S 3]{Yakir}). It is a sum of an absolutely
continuous measure, which is similar to the one of the limiting Ginibre ensemble,
and a discrete measure with atoms at $\bZ^2\setminus\{0\}$,
\[
\rho_{\La^a}=(1-e^{-2a\pi^2|\xi|^2})m
+\sum_{\nu\in\bZ^2\setminus\{0\}} e^{-2a\pi^2|\nu|^2} \delta_\nu\,.
\]
Moreover, the reduced covariance measure $\kappa_{\La^a}$ is given by
\[
\kappa_{\La^a}=\delta_0-m+\sum_{\nu\ne 0}(2a\pi)^{-1}e^{-|z-\nu|^2/(2a)}m.
\]
This can be obtained by a direct computation on the spatial side,
or by taking the inverse Fourier transform of $\rho_{\La^a}$.
In the limit as $a\to 0$, we obtain the randomly shifted lattice
$\La = \bigl\{\nu + U\bigr\}_{\nu\in\bZ^2}$ with the purely atomic spectral measure
$\rho_\La = \sum_{\nu\in\bZ^2\setminus\{0\}} \delta_\nu $.

\section{Reciprocal sums over stationary point processes}
\label{s:lemmas}
\subsection{Convergence of three series}
Recall the notation $(\Omega, \mathcal F, \bP)$ for the probability
space on which the point process $\La$ is defined.
To define the random Weierstrass zeta function,
we need three lemmas.

\begin{lemma}\label{lem1}
Let $\La$ be a stationary point process in $\bC$ having a 
finite second moment. Then almost surely and in
$L^2(\Omega, \bP)$,
\[
\sum_{|\la|\ge 1} \frac1{|\la|^3} < \infty\,.
\]
\end{lemma}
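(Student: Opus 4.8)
The plan is to prove the stronger statement that the non-negative random variable $S:=\sum_{|\la|\ge1}|\la|^{-3}$ lies in $L^2(\Omega,\bP)$; both the almost sure finiteness and the $L^2$-convergence of the partial sums then follow quickly. First I would settle the almost sure claim by a first-moment estimate. Since $S\ge0$, Tonelli's theorem together with the Campbell formula $\bE[n_\La]=c_\La m$ gives
\[
\bE[S]=\bE\Big[\int_{|z|\ge1}|z|^{-3}\,{\rm d}n_\La(z)\Big]
=c_\La\int_{|z|\ge1}|z|^{-3}\,{\rm d}m(z)=2\pi c_\La<\infty,
\]
using $\int_{|z|\ge1}|z|^{-3}\,{\rm d}m(z)=2\pi\int_1^\infty r^{-2}\,{\rm d}r=2\pi$. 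A non-negative random variable with finite expectation is finite almost surely, which disposes of the first assertion.

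For the $L^2$-bound I would decompose into dyadic annuli $A_k=\{2^k\le|z|<2^{k+1}\}$, $k\ge0$, and write $S=\sum_{k\ge0}S^{(k)}$ with $S^{(k)}=\sum_{\la\in A_k}|\la|^{-3}\le 2^{-3k}\,n_\La(A_k)$, since every $\la\in A_k$ satisfies $|\la|\ge 2^k$. By the triangle inequality in $L^2(\Omega,\bP)$,
\[
\|S\|_{L^2}\le\sum_{k\ge0}\|S^{(k)}\|_{L^2}\le\sum_{k\ge0}2^{-3k}\,\|n_\La(A_k)\|_{L^2}.
\]
As $A_k\subset 2^{k+2}\bD$, it is enough to control $\|n_\La(R\,\bD)\|_{L^2}$ at $R=2^{k+2}$. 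The mean is exactly $\bE[n_\La(R\,\bD)]=\pi c_\La R^2$, while the second-order structure enters through the variance: by \eqref{eq:claim1} the normalized variance ${\sf Var}[n_\La(R\,\bD)/(\pi R^2)]$ converges to the finite number $\rho_\La(\{0\})$ and is therefore bounded, so ${\sf Var}[n_\La(R\,\bD)]=O(R^4)$. (Alternatively this follows directly from the translation-boundedness of $\kappa_\La$, writing ${\sf Var}[n_\La(R\,\bD)]=\int h_R\,{\rm d}\kappa_\La$ with $h_R(w)=m\bigl(R\,\bD\cap(R\,\bD+w)\bigr)\le\pi R^2$ supported in $|w|\le 2R$, and estimating $|\kappa_\La|(\{|w|\le 2R\})=O(R^2)$.) Combining mean and variance yields $\bE[n_\La(R\,\bD)^2]=O(R^4)$, i.e.\ $\|n_\La(2^{k+2}\bD)\|_{L^2}=O(2^{2k})$. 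Substituting back,
\[
\|S\|_{L^2}\lesssim\sum_{k\ge0}2^{-3k}\,2^{2k}=\sum_{k\ge0}2^{-k}<\infty,
\]
so $S\in L^2(\Omega,\bP)$.

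Finally, the $L^2$-convergence of the partial sums $S_R=\sum_{1\le|\la|\le R}|\la|^{-3}$ is immediate from $S\in L^2$: one has $0\le S-S_R\le S$ with $S-S_R\to0$ almost surely, so dominated convergence (dominating $(S-S_R)^2$ by $S^2\in L^1$) gives $\bE[(S-S_R)^2]\to0$. The only place where the hypotheses are genuinely used is the estimate $\|n_\La(A_k)\|_{L^2}=O(2^{2k})$, and this is the crux of the argument. The exponent is sharp in the sense that the counts in a disk of radius $R$ may fluctuate with variance as large as $R^4$ in the ``hyper-fluctuating'' regime, so no better bound is available for a general finite-second-moment process; the point is simply that the growth $O(R^4)$ is exactly what the dyadic series $\sum_k 2^{-3k}2^{2k}$ can still absorb, so translation-boundedness of the covariance structure is precisely the input that makes the sum converge in $L^2$.
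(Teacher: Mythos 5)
Your proof is correct, and the almost-sure part (the first-moment Campbell computation) is identical to the paper's. For the $L^2$ part, however, you take a genuinely different and more elementary route. The paper moves to the spectral side via the Parseval formula \eqref{eq21}, writes the variance of the partial sums as an integral of $\bigl|2\pi\int_1^R J_0(2\pi|\xi|t)t^{-2}\,{\rm d}t\bigr|^2$ against $\rho_\La$, and uses the Bessel asymptotics \eqref{eq:bessel_function_at_infinity} to produce the uniform dominating function $\min\{1,|\xi|^{-3}\}\in L^1(\rho_\La)$, after which dominated convergence gives the $L^2$ limit; this sets up the spectral technique reused in Lemmas~\ref{lem2} and~\ref{lem3}. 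You instead stay entirely on the spatial side: a dyadic decomposition into annuli, the crude count bound $S^{(k)}\le 2^{-3k}n_\La(A_k)$, and the worst-case estimate $\|n_\La(R\,\bD)\|_{L^2}=O(R^2)$ coming from \eqref{eq:claim1} (or directly from translation-boundedness of $\kappa_\La$), which the weight $2^{-3k}$ absorbs with a geometric margin. Your argument avoids Bessel functions and stationary phase altogether and makes transparent that only the trivial $O(R^4)$ variance bound for counts is needed — even in the hyper-fluctuating regime — whereas the paper's spectral computation, while heavier here, is the template for the sharper Lemmas~\ref{lem2} and~\ref{lem3} where such crude bounds would not suffice. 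The only point worth making explicit is that applying the covariance formula to the indicator $\done_{R\bD}$ (rather than to $\varphi\in\mathfrak D$) requires the routine $L^2(\rho_\La)$-closure argument of Remark~\ref{rem:L2-extension}, which the paper itself invokes freely; with that noted, your proof is complete.
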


For $R>1$, we set
\begin{equation}
\label{eq:def-Psi-ell}
\Psi_\ell (R) = \sum_{1\le |\la|\le R} \frac1{\la^\ell}\,, \qquad \ell = 1, 2\,.
\end{equation}
The behavior of these two sums as $R\to\infty$ will be important for us.

\begin{lemma}\label{lem2}
Let $\La$ be a stationary point process in $\bC$ having a finite second moment. Then
there exists a random variable $\Psi_2(\infty)\in L^2(\Omega, \bP)$ such that
\[
\lim_{R\to\infty} \bE\bigl[ |\Psi_2(R)- \Psi_2(\infty) |^2 \bigr] = 0\,.
\]
\end{lemma}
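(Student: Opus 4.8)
The plan is to realize $\Psi_2(R)$ as a linear statistic and move the whole problem to the spectral side, where convergence becomes an application of dominated convergence in $L^2(\rho_\La)$. Writing $\varphi_R(z)=z^{-2}\mathbf 1_{\{1\le|z|\le R\}}$, we have $\Psi_2(R)=n_\La(\varphi_R)$, a finite sum almost surely. Note first that this is genuinely a cancellation phenomenon, unlike Lemma~\ref{lem1}: the absolute sum has mean $c_\La\int_{1\le|z|\le R}|z|^{-2}\,{\rm d}m=2\pi c_\La\log R\to\infty$, so convergence must come from the phases in $\la^{-2}$. Accordingly I would record that $\bE[\Psi_2(R)]=c_\La\int_\bC\varphi_R\,{\rm d}m=0$ for every $R$, since the annulus is rotation invariant while $z^{-2}$ integrates to zero over each circle; thus $\Psi_2(R)$ is automatically centered and only its variance must be controlled. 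By the Parseval identity \eqref{eq21}, extended to $\varphi_R$ as in Remark~\ref{rem:L2-extension} (after a routine smoothing of the two bounding circles), the map $\varphi\mapsto n_\La(\varphi)-\bE[n_\La(\varphi)]$ is an isometry from test functions, normed by $\|\widehat{\,\cdot\,}\|_{L^2(\rho_\La)}$, into $L^2(\Omega,\bP)$. Hence it suffices to show that $\widehat{\varphi_R}$ converges in $L^2(\rho_\La)$ as $R\to\infty$; the limit $\Psi_2(\infty)$ is then the corresponding element of $L^2(\Omega,\bP)$.

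Second I would compute $\widehat{\varphi_R}$. Since $\varphi_R(e^{{\rm i}\beta}z)=e^{-2{\rm i}\beta}\varphi_R(z)$, the transform inherits the same covariance, $\widehat{\varphi_R}(\xi)=e^{-2{\rm i}\arg\xi}\,g_R(|\xi|)$ for a radial profile $g_R$, and in particular $\widehat{\varphi_R}(0)=0$. Expanding the angular integral via the Jacobi--Anger formula turns the radial part into a Bessel integral, and a change of variables gives
\[
\widehat{\varphi_R}(\xi)=-2\pi\, e^{-2{\rm i}\arg\xi}\int_{2\pi|\xi|}^{2\pi|\xi|R}\frac{J_2(u)}{u}\,{\rm d}u\,.
\]
The virtue of this explicit form is that $u^{-1}J_2(u)$ is integrable at both ends: near $0$ one has $J_2(u)\sim u^2/8$, so $u^{-1}J_2(u)\sim u/8$, while at infinity $J_2(u)=\sqrt{2/(\pi u)}\cos(u-\tfrac{5\pi}4)+O(u^{-3/2})$ makes $\int^\infty u^{-1}J_2(u)\,{\rm d}u$ a convergent oscillatory integral. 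Consequently $\int_0^\infty u^{-1}J_2(u)\,{\rm d}u$ converges, and $\widehat{\varphi_R}(\xi)\to\widehat{\varphi_\infty}(\xi):=-2\pi e^{-2{\rm i}\arg\xi}\int_{2\pi|\xi|}^{\infty}u^{-1}J_2(u)\,{\rm d}u$ pointwise for every $\xi\ne0$ (and trivially at $\xi=0$).

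Third comes the domination. Setting the tail function $T(a)=\sup_{c\ge a}\bigl|\int_c^\infty u^{-1}J_2(u)\,{\rm d}u\bigr|$, convergence of the improper integral gives $T(a)\to0$, and integration by parts against the oscillatory asymptotics yields the sharp rate $T(a)=O(a^{-3/2})$. Splitting $\int_{2\pi|\xi|}^{2\pi|\xi|R}=\int_{2\pi|\xi|}^{\infty}-\int_{2\pi|\xi|R}^{\infty}$ and using $R\ge1$, this produces a bound uniform in $R$,
\[
|\widehat{\varphi_R}(\xi)|\;\lesssim\;\min\bigl(1,\,|\xi|^{-3/2}\bigr)\,,
\]
so that $|\widehat{\varphi_R}-\widehat{\varphi_\infty}|^2\le G(\xi)$ with $G(\xi)=C\mathbf 1_{\{|\xi|\le1\}}+C|\xi|^{-3}\mathbf 1_{\{|\xi|>1\}}$. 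Finally $G\in L^1(\rho_\La)$: the first piece is finite because $\rho_\La$ is locally finite, and the second because $\int_{|\xi|>1}|\xi|^{-3}\,{\rm d}\rho_\La\lesssim\int(1+|\xi|^3)^{-1}\,{\rm d}\rho_\La<\infty$ by \eqref{eq25} with $a=3$. Dominated convergence then gives $\|\widehat{\varphi_R}-\widehat{\varphi_\infty}\|_{L^2(\rho_\La)}\to0$, and the isometry delivers the desired $L^2(\Omega,\bP)$ limit $\Psi_2(\infty)$.

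The main obstacle is precisely the matching of exponents. A jump discontinuity of $\varphi_R$ across the bounding circles forces only the classical $|\xi|^{-3/2}$ decay of $\widehat{\varphi_R}$, hence $|\xi|^{-3}$ for its square; against a merely translation-bounded spectral measure, whose mass on $\{|\xi|\approx n\}$ can grow like $n$, this is integrable at infinity by the thinnest of margins (the resulting series $\sum n\cdot n^{-3}$ converges, whereas $|\xi|^{-2}$ decay would fail). Establishing the uniform-in-$R$ Bessel tail bound $T(a)=O(a^{-3/2})$ and checking that it lands on the good side of the threshold $a>2$ in \eqref{eq25} is therefore the crux; everything else is bookkeeping together with the standard smoothing argument needed to license \eqref{eq21} for the non-smooth $\varphi_R$.
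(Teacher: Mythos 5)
Your proposal is correct and follows essentially the same route as the paper: center $\Psi_2(R)$ (zero mean by rotational symmetry), pass to the spectral side via the Parseval identity \eqref{eq21}, reduce the Fourier transform of the annular cut-off of $z^{-2}$ to a $J_2$-Bessel integral, and conclude from the Bessel asymptotics together with translation-boundedness of $\rho_\La$. The only (immaterial) difference is that the paper evaluates $\int_R^{R'}r^{-1}J_2(2\pi|\xi|r)\,{\rm d}r$ in closed form via $(J_1(x)/x)'=-J_2(x)/x$ and verifies the Cauchy criterion directly, whereas you keep the oscillatory integral, prove an $R$-uniform tail bound $O(|\xi|^{-3/2})$, and invoke dominated convergence — same estimates, same threshold against \eqref{eq25}.
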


The convergence of $\Psi_1(R)$ in $L^2(\Omega, \bP)$ requires an additional property
of the spectral measure $\rho_\La$ of $\La$.

\begin{lemma}\label{lem3}
Let $\La$ be a stationary point process in $\bC$ with spectral measure $\rho_\La$,
and assume that
\[
\int_{0<|\xi|\le 1}\frac{{\rm d}\rho_\La(\xi)}{|\xi|^2}<\infty.
\]
Then there exists a random variable $\Psi_1(\infty)\in L^2(\Omega, \bP)$ such that
\[
\lim_{R\to\infty} \bE\bigl[ |\Psi_1(R)- \Psi_1(\infty) |^2 \bigr] = 0\,.
\]
\end{lemma}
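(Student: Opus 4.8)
The plan is to realize $\Psi_1(R)$ as a centered linear statistic and to compute the $L^2(\Omega,\bP)$-norm of its increments spectrally via \eqref{eq21}. Write $\Psi_1(R)=n_\La(\varphi_R)$ with the bounded, compactly supported test function $\varphi_R(z)=z^{-1}\mathbf{1}_{\{1\le|z|\le R\}}$. Rotational symmetry of the annulus gives $\bE[\Psi_1(R)]=c_\La\int_{1\le|z|\le R}z^{-1}\,{\rm d}m(z)=0$, so $\Psi_1(R)$ is already centered. Since $L^2(\Omega,\bP)$ is complete, it suffices to prove that $\{\Psi_1(R)\}_{R>1}$ is Cauchy in $L^2$, i.e.\ that $\bE|\Psi_1(R')-\Psi_1(R)|^2\to 0$ as $R,R'\to\infty$. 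By \eqref{eq21}, extended to $\varphi_R$ through the $L^2(\rho_\La)$-closure of Remark~\ref{rem:L2-extension} (approximating $\varphi_R$ by functions in $\mathfrak D$), this variance equals $\|\widehat{\varphi_{R'}}-\widehat{\varphi_R}\|_{L^2(\rho_\La)}^2$, so everything reduces to the behaviour of $\widehat{\varphi_R}$ in $L^2(\rho_\La)$.

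The next step is to compute $\widehat{\varphi_R}$ explicitly. In polar coordinates $z=re^{i\theta}$, $\xi=\rho e^{i\phi}$, the angular integral is a Bessel integral, $\int_0^{2\pi}e^{-2\pi i r\rho\cos(\theta-\phi)}e^{-i\theta}\,{\rm d}\theta=-2\pi i\,e^{-i\phi}J_1(2\pi r\rho)$, while the radial integral uses $\frac{{\rm d}}{{\rm d}r}J_0(2\pi\rho r)=-2\pi\rho J_1(2\pi\rho r)$. Together with $e^{-i\phi}/\rho=1/\xi$ this yields, for $\xi\ne 0$,
\[
\widehat{\varphi_R}(\xi)=\frac{-i}{\xi}\bigl(J_0(2\pi|\xi|)-J_0(2\pi R|\xi|)\bigr),
\]
with $\widehat{\varphi_R}(0)=0$, consistent with the vanishing mean. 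In particular a possible atom $\rho_\La(\{0\})>0$ contributes nothing, and for $\xi\ne 0$ one has $\widehat{\varphi_{R'}}(\xi)-\widehat{\varphi_R}(\xi)=\frac{-i}{\xi}\bigl(J_0(2\pi R|\xi|)-J_0(2\pi R'|\xi|)\bigr)$. Via $|a-b|^2\le 2|a|^2+2|b|^2$, the Cauchy estimate reduces to showing that
\[
I(R):=\int_{\xi\ne 0}\frac{J_0(2\pi R|\xi|)^2}{|\xi|^2}\,{\rm d}\rho_\La(\xi)\longrightarrow 0\qquad(R\to\infty).
\]

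To prove $I(R)\to 0$ I would split the integral at $|\xi|=1$. On the low frequencies $0<|\xi|\le 1$, the bound $|J_0|\le 1$ dominates the integrand by $|\xi|^{-2}$, which is $\rho_\La$-integrable there \emph{precisely} by the hypothesis of the lemma; since $J_0(2\pi R|\xi|)\to 0$ pointwise for each fixed $\xi\ne 0$, dominated convergence kills this part. On the high frequencies $|\xi|>1$, the standard asymptotics $J_0(x)^2\lesssim x^{-1}$ give $|\xi|^{-2}J_0(2\pi R|\xi|)^2\lesssim R^{-1}|\xi|^{-3}$, and $\int_{|\xi|>1}|\xi|^{-3}\,{\rm d}\rho_\La<\infty$ by the translation-boundedness bound \eqref{eq25} (with $a=3$), so this part is $O(R^{-1})$. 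Hence $I(R)\to 0$, the family $\{\Psi_1(R)\}$ is Cauchy, and it converges in $L^2(\Omega,\bP)$ to the desired $\Psi_1(\infty)$.

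The genuinely delicate point is the rigorous passage from \eqref{eq21} (stated for $\varphi,\psi\in\mathfrak D$) to the discontinuous, complex, non-decaying symbol attached to $\varphi_R$: one must verify that $\widehat{\varphi_R}\in L^2(\rho_\La)$ and that the isometry $\varphi\mapsto n_\La(\varphi)-\bE[n_\La(\varphi)]$ really extends to carry $\widehat{\varphi_R}$ to the honest finite sum $\Psi_1(R)$, which requires choosing mollifiers $\varphi^{(n)}\in\mathfrak D$ with $\widehat{\varphi^{(n)}}\to\widehat{\varphi_R}$ in $L^2(\rho_\La)$ and $n_\La(\varphi^{(n)})\to\Psi_1(R)$ in $L^2(\Omega)$. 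The membership $\widehat{\varphi_R}\in L^2(\rho_\La)$ is itself where the spectral hypothesis enters (near the origin $|\widehat{\varphi_R}(\xi)|^2\sim|\xi|^{-2}$), the far field being absorbed by \eqref{eq25}; the Bessel computation and the two-regime estimate of $I(R)$ are then routine once the spectral identity is in force.
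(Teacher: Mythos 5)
Your proof is correct and follows essentially the same route as the paper: reduce to the Cauchy criterion in $L^2(\Omega,\bP)$ via the Parseval identity \eqref{eq21}, compute $\widehat{\varphi_R}(\xi)=-\mathrm{i}\,\xi^{-1}\bigl(J_0(2\pi|\xi|)-J_0(2\pi R|\xi|)\bigr)$ (the paper obtains the same expression via the Cauchy--Green formula rather than your polar-coordinate Bessel integrals), and split into low frequencies, where the spectral hypothesis enters, and high frequencies, handled by the Bessel asymptotics \eqref{eq:bessel_function_at_infinity} and translation-boundedness \eqref{eq25}. The only differences are cosmetic: the paper splits at $|\xi|=R^{-1/4}$ whereas you split at $|\xi|=1$ and invoke dominated convergence, and note that for each fixed $R$ the function $\widehat{\varphi_R}$ is bounded near the origin (it vanishes there), so the hypothesis is really needed for the uniform-in-$R$ control of $I(R)$ rather than for the membership $\widehat{\varphi_R}\in L^2(\rho_\La)$.
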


It is worth mentioning that the conditional convergence of the
series $\sum_\La \la^{-\ell}$ with $\ell = 1, 2$, when $\La$ is the
limiting Ginibre process, or the zero process of GEF appear as auxiliary
results in Ghosh-Peres~\cite[Sections~8 and 10]{GP}.

\subsection{Proof of the three lemmas}
The \emph{Bessel function of order }$\nu\in \bZ_+$ is given by
\begin{equation*}
J_\nu(x) = \frac{1}{2\pi} \int_{-\pi}^{\pi}
e^{{\rm i} (x\sin \theta -\nu \theta)} {\rm d} \theta.
\end{equation*}
We will frequently use two basic properties
of the Bessel function, namely the asymptotic
formulas
\begin{equation}\label{eq:bessel_function_near_zero}
J_\nu(x) = \frac{1}{\nu!} \left(\frac{x}{2}\right)^\nu
+ o(x^\nu),\qquad \text{as }\, x\to 0,
\end{equation}
and
\begin{equation}\label{eq:bessel_function_at_infinity}
\sup_{|x|\ge 1} |x|^{3/2}\left|J_\nu(x) - \sqrt{\frac{2}{\pi x}}
\cos\left(x - \frac{\nu\pi}{2} - \frac{\pi}{4}\right) \right| < \infty.
\end{equation}
For the proof of both facts see, for instance, \cite[Ch.7]{watson}.
\subsubsection{Proof of Lemma \ref{lem1}}
Denote by $\widetilde{\Psi}_3(R) = \sum_{1\leq |\la| \leq R} |\la|^{-3}$.
We will show that the limit $\widetilde{\Psi}_3(\infty)$ exists both
almost surely and in $L^2(\Omega,\bP)$. We have
\begin{align*}
\bE[\widetilde{\Psi}_3(R)] &= \int_{\{1\leq |x|\le R\}}
\frac{\bE[{\rm d}n_\La(x)]}{|x|^3} \\ &=
\bE[\mathfrak{c}_\La] \int_{\{1\leq |x|\le R\}}
\frac{{\rm d}m(x)}{|x|^3} \lesssim \int_{1}^{\infty}
\frac{{\rm d}s}{s^2} < \infty\, ,
\end{align*}
which implies that $\widetilde{\Psi}_3(R)$ converges almost surely,
as the sum consists of positive terms.

To show that $\widetilde{\Psi}_3(\infty) \in L^2(\Omega,\bP)$,
we use the Parseval identity
\eqref{eq21} to move to the spectral side, which gives
\begin{align*}
{\sf Var}\left[\widetilde{\Psi}_3(R)\right]
&= \int_{\bC} \left|\int_{\{1\leq |x|\leq R\}}e^{-2\pi {\rm i} \xi \cdot x}
\frac{{\rm d} m(x)}{|x|^3}\right|^2 {\rm d} \rho_{\La}(\xi) \\
&= \int_{\bC} \left|\int_{1}^{R}\left(\int_{-\pi}^{\pi}e^{-2\pi {\rm i}
|\xi|t \cos \theta} {\rm d}\theta \right) \frac{{\rm d}t}{t^2}\right|^2 
{\rm d} \rho_{\La}(\xi) \\
&= \int_{\bC}\left|2\pi\int_{1}^{R}
\frac{J_0(2\pi |\xi|t)}{t^2} {\rm d}t\right|^2 {\rm d} \rho_{\La}(\xi).
\end{align*}
The integrand above is bounded uniformly in $R$. Thus, we need to check
how fast it decays as $|\xi|$ becomes large. For this, we use the
asymptotic formula \eqref{eq:bessel_function_at_infinity} for the Bessel function
and see that, for $|\xi|\ge 1$,
\begin{align*}
\left|\int_{1}^{R} \frac{J_0(2\pi |\xi|t)}{t^2} {\rm d}t\right|
& \lesssim \frac{1}{|\xi|^{3/2}} \int_{1}^{R} \frac{{\rm d}t}{t^{7/2}}
+ \frac{1}{|\xi|^{1/2}}\left|\int_{1}^{R}\frac{\cos\left(2\pi|\xi|t
-\frac{\pi}{4}\right)}{t^{5/2}} \, {\rm d}t\right| \\ 
& \lesssim \frac{1}{|\xi|^{3/2}}
+ \frac{1}{|\xi|^{3/2}}\left[1+ \frac{1}{R^{5/2}}
+ \int_{1}^{R}\frac{{\rm d}t}{t^{7/2}} \right].
\end{align*}
Therefore,
\[
\sup_{R\ge 1} \left|\int_{1}^{R} \frac{J_0(2\pi |\xi|t)}{t^2}
{\rm d}t\right|^2 \lesssim \min\{1, |\xi|^{-3}\}
\]
and the function on the RHS is ${\rm d}\rho_{\La}$-integrable.
Hence, we can apply the dominated convergence theorem and deduce that
$\widetilde{\Psi}_3(R)$ converge in $L^2(\Omega,\bP)$ as well.
\qed
\subsubsection{Proof of Lemma \ref{lem2}}
Lemma~\ref{lem2} states that the random variables
$\Psi_2(R) = \sum_{1\leq |\la|\leq R} \la^{-2}$
converge in $L^2(\Omega,\bP)$ as $R\to \infty$ to a limiting random
variable $\Psi_2(\infty) \in L^2(\Omega,\bP)$. This will follow once we
show that $\{\Psi_2(R)\}_{R\ge1}$ satisfies
the Cauchy criterion in $L^2(\Omega,\bP)$,
which we do by a computation. For any $R\ge 1$,
\[
\bE\left[\Psi_2(R)\right] = c_\La
\int_{\{1\leq |\la|\leq R\}} \frac{{\rm d}m(\la)}{\la^2} = 0\, ,
\]
and by the Parseval formula \eqref{eq21}, applied with
$\varphi(x)= \displaystyle{x^{-2}}\done_{\{R\leq |x|\leq R^\prime\}}$
for any $R^\prime> R$, we have
\begin{align*}
\bE\left[\left|\Psi_2(R^\prime) - \Psi_2(R)\right|^2\right]
&= \int_{\bC} \left|\int_{\{R \leq |x|\leq R^\prime\}}e^{-2\pi {\rm i} \xi \cdot x}
\frac{{\rm d} m(x)}{x^2} \right|^2 {\rm d} \rho_{\La}(\xi) \\
&= \int_{\bC\setminus\{0\}} \left|\int_{\{R \leq |x|\leq R^\prime\}}e^{-2\pi {\rm i}
\xi \cdot x} \frac{{\rm d} m(x)}{x^2} \right|^2 {\rm d} \rho_{\La}(\xi).
\end{align*}
We may rewrite the inner integral on the RHS as
\begin{align*}
\int_{\{R \leq |x|\leq R^\prime\}}e^{-2\pi {\rm i} \xi \cdot x}
\frac{{\rm d} m(x)}{x^2} &= e^{-2{\rm i}\chi}\int_{R}^{R^\prime}
\left(\int_{-\pi}^{\pi}e^{-2\pi {\rm i} |\xi|r \cos\theta }
e^{-2{\rm i}\theta}{\rm d}\theta\right) \frac{{\rm d} r}{r} \\
& \qquad = -2\pi e^{-2{\rm i}\chi} \int_{R}^{R^\prime}
\frac{J_2(2\pi |\xi| r)}{r} \, {\rm d}r \\
& \qquad \qquad= e^{-2{\rm i}\chi}
\left(\frac{J_1(2\pi R^\prime|\xi|)}{R^\prime|\xi|}
- \frac{J_1(2\pi R|\xi|)}{R|\xi|}\right)\, ,
\end{align*}
where $\chi=\arg \xi$, and where in the last equality we used
that $(J_1(x)/x)^\prime = -J_2(x)/x$. Thus,
\begin{align*}
\bE\left[\left|\Psi_2(R^\prime) - \Psi_2(R)\right|^2\right]
&= \int_{\bC\setminus\{0\}} \left|\frac{J_1(2\pi R^\prime|\xi|)}{R^\prime|\xi|}
- \frac{J_1(2\pi R|\xi|)}{R|\xi|}\right|^2 {\rm d} \rho_{\La}(\xi).
\end{align*}
By the near-origin asymptotics
\eqref{eq:bessel_function_near_zero} of the Bessel functions,
$J_1(x)/x$ is
bounded near the origin, and, together with the asymptotic formula
\eqref{eq:bessel_function_at_infinity} and
the translation-boundedness of $\rho_\La$, we get
\begin{align*}
\lim_{R\to\infty}\sup_{R^{\prime}\ge R} 
& \bE\left[\left|\Psi_2(R^\prime)
- \Psi_2(R)\right|^2\right] \\ &\lesssim \limsup_{R\to\infty}
\left[\rho_{\La}(\{0<|\xi|\leq R^{-1/2}\}) + \int_{\{|\xi|\ge R^{-1/2}\}}
\frac{{\rm d} \rho_{\La}(\xi)}{(1+R|\xi|)^3}\right] = 0.
\end{align*}
That is, $\Psi_2(R)$ is a Cauchy sequence in $L^2(\Omega,\bP)$.
\qed

\subsubsection{Proof of Lemma~\ref{lem3}}
We start by computing $\bE[|\Psi_1(R^\prime) - \Psi_1(R)|^2]$
for $R^\prime>R$. For any $R\ge 1$ we have
\[
\bE\left[\Psi_1(R)\right]=c_\La \int_{\{1\leq |\la|\leq R\}} 
\frac{{\rm d} m(\la)}{\la} = 0
\]
and so, for any $R^\prime>R$, the Parseval formula \eqref{eq21} gives that
\begin{align*}
\bE\left[\left|\Psi_1(R^\prime) - \Psi_1(R)\right|^2\right]
&= \int_{\bC}\left|\int_{\{R \leq |x|\leq R^\prime\}}e^{-2\pi {\rm i} \xi \cdot x}
\frac{{\rm d} m(x)}{x}\right|^2 {\rm d} \rho_\La(\xi) \\
&= \int_{\bC \setminus\{0\}}
\left|\int_{\{R \leq |x|\leq R^\prime\}}e^{-2\pi {\rm i} \xi \cdot x}
\frac{{\rm d} m(x)}{x}\right|^2 {\rm d} \rho_\La(\xi).
\end{align*}
Since
\[
e^{-2\pi {\rm i} \xi \cdot x}
=e^{-\pi {\rm i} (\xi\bar{x} + \bar{\xi} x)} = \bar\partial_{x}
\left(\frac{e^{-2\pi {\rm i} \xi \cdot x}}{-\pi {\rm i} \xi}\right),
\]
we can use the Cauchy-Green formula to obtain
\begin{align*}
\int_{\{R \leq |x|\leq R^\prime\}}e^{-2\pi {\rm i} \xi \cdot x} \frac{{\rm d} m(x)}{x}
&= \frac{1}{2\pi \xi} \left(\int_{\{|x|=R^\prime\}}
e^{-2\pi {\rm i} \xi \cdot x} \frac{{\rm d}x}{x}
-\int_{\{|x|=R\}}e^{-2\pi {\rm i} \xi \cdot x} \frac{{\rm d}x}{x}\right)
\\ & = \frac{\bar{\xi}}{|\xi|} \frac{J_0(2\pi R^\prime|\xi|)-J_0(2\pi R|\xi|)}{\xi},
\end{align*}
and plugging this into the above formula for
$\bE\left[\left|\Psi_1(R^\prime) - \Psi_1(R)\right|^2\right]$ gives
\begin{equation}{\label{eq:formula_variance_psi_1}}
\bE\left[\left|\Psi_1(R^\prime) - \Psi_1(R)\right|^2\right]
= \int_{\bC \setminus\{0\}} \left(J_0(2\pi R^\prime|\xi|)
-J_0(2\pi R|\xi|)\right)^2 \frac{{\rm d}\rho_{\La}(\xi)}{|\xi|^2}.
\end{equation}
Since $J_0$ is bounded, we can use \eqref{eq:formula_variance_psi_1}
and the asymptotic formula \eqref{eq:bessel_function_at_infinity} for $J_0$ to get
\begin{align*}
\lim_{R\to\infty}\sup_{R^{\prime}\ge R}
&\bE\left[\left|\Psi_1(R^\prime) - \Psi_1(R)\right|^2\right]
\\ & \lesssim \limsup_{R\to \infty}
\left[\int_{\{0<|\xi|\leq R^{-1/4}\}} \frac{{\rm d}\rho_{\La}(\xi)}{|\xi|^2}
+ \frac{1}{R} \int_{\{|\xi|\ge R^{-1/4}\}} \frac{{\rm d}\rho_{\La}(\xi)}{|\xi|^3}\right] \\
& \qquad \lesssim \limsup_{R\to\infty}\left[R^{-3/4}\int_{\{0<|\xi|\leq 1\}}
\frac{{\rm d}\rho_{\La}(\xi)}{|\xi|^2} + \frac{1}{R}\int_{\{|\xi|\ge 1\}}
\frac{{\rm d}\rho_{\La}(\xi)}{|\xi|^3}\right] = 0.
\end{align*}
That is, $\{\Psi_1(R)\}_{R\geq 1}$ is Cauchy in
$L^2(\Omega,\bP)$. This completes the proof.
\qed

\subsection{Translation properties of reciprocal sums}
The random variables $\Psi_\ell(R)$ are 
defined by summation over annuli centered
at the origin. It will be essential to understand
the effect of translating the center of summation
in these sums. This amounts to understanding
the summation over the \emph{lunar domains}
formed as the symmetric difference of two large disks with different centers.
\begin{lemma}\label{lemma:lunar_domains}
Let $\La$ be a stationary point process in $\bC$ with finite second moment,
with conditional intensity
$\mathfrak{c}_\La = \pi^{-1}\bE[n_\La({\bD}) \mid \mathcal{F}_{{\sf inv}}]$.
Then, for any $u,v,z\in \bC$, we have
\[
\lim_{R\to \infty} {\sf Var}\left[\sum_{|\la-u|\leq R} \frac{1}{z-\la}
- \sum_{|\la-v|\leq R} \frac{1}{z-\la}
+ \pi \mathfrak{c}_\La (\overline{u-v})\right] = 0.
\]
\end{lemma}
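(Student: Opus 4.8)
The plan is to rewrite the bracketed quantity as a single compensated linear statistic and then pass to the spectral side, where the additive term $\pi\mathfrak c_\La\overline{(u-v)}$ will exactly remove the contribution of the atom of $\rho_\La$ at the origin. Fix a compact $K\subset\bC$ with $u,v,z\in K$ and take $R$ so large that $z$ lies inside both disks. With
\[
\varphi_R(x)=\frac{1}{z-x}\bigl(\done_{\{|x-u|\le R\}}-\done_{\{|x-v|\le R\}}\bigr),
\]
the difference of the two sums is $n_\La(\varphi_R)$, and $\varphi_R$ is bounded and compactly supported (it vanishes near its only potential pole $z$), so \eqref{eq21} applies in the extended sense of Remark~\ref{rem:L2-extension}. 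The one value I read off directly is
\[
\widehat{\varphi_R}(0)=\int_{\bC}\varphi_R\,{\rm d}m=\pi\overline{(z-u)}-\pi\overline{(z-v)}=-\pi\overline{(u-v)},
\]
using $\int_{\{|x-c|\le R\}}(z-x)^{-1}\,{\rm d}m(x)=\pi\overline{(z-c)}$ for $z$ inside the disk (a one-line consequence of $\bar\partial\,w^{-1}=\pi\delta_0$); note this is independent of $R$.

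I would then reduce the variance to an integral over $\bC\setminus\{0\}$. By stationarity the map $\varphi\mapsto\bE[n_\La(\varphi)\mid\mathcal F_{\sf inv}]$ is a translation-invariant (random) measure, hence a random multiple of $m$; testing against $\done_\bD$ identifies the multiple as $\mathfrak c_\La$, so $\bE[n_\La(\varphi)\mid\mathcal F_{\sf inv}]=\mathfrak c_\La\widehat\varphi(0)$. In particular $\bE[n_\La(\varphi_R)\mid\mathcal F_{\sf inv}]=-\pi\mathfrak c_\La\overline{(u-v)}$, so the quantity inside the variance is precisely the conditionally centred statistic $Y_R:=n_\La(\varphi_R)-\bE[n_\La(\varphi_R)\mid\mathcal F_{\sf inv}]$. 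The law of total variance together with \eqref{eq21} and \eqref{eq:claim1} then gives
\begin{align*}
{\sf Var}[Y_R]&=\bE\bigl[{\sf Var}(n_\La(\varphi_R)\mid\mathcal F_{\sf inv})\bigr]\\
&={\sf Var}[n_\La(\varphi_R)]-|\widehat{\varphi_R}(0)|^2\rho_\La(\{0\})=\int_{\bC\setminus\{0\}}|\widehat{\varphi_R}(\xi)|^2\,{\rm d}\rho_\La(\xi),
\end{align*}
and everything reduces to showing that this spectral integral tends to $0$ as $R\to\infty$, uniformly for $u,v,z\in K$.

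To estimate $\widehat{\varphi_R}(\xi)$ for $\xi\ne0$ I would mimic the proof of Lemma~\ref{lem3}: writing $e^{-2\pi i\xi\cdot x}=\bar\partial_x\bigl(e^{-2\pi i\xi\cdot x}/(-\pi i\xi)\bigr)$ and using that $(z-x)^{-1}$ is holomorphic off $z$, the Cauchy--Green formula reduces $\widehat{\varphi_R}$ to boundary integrals over the two circles (the pole at $z$ contributes equally to both disks and cancels in the difference):
\[
\widehat{\varphi_R}(\xi)=\frac{1}{2\pi\xi}\Bigl(\oint_{|x-u|=R}-\oint_{|x-v|=R}\Bigr)\frac{e^{-2\pi i\xi\cdot x}}{z-x}\,{\rm d}x .
\]
Parametrising $x=c+Re^{i\theta}$, writing $\chi=\arg\xi$, and using $\tfrac{iRe^{i\theta}}{(z-c)-Re^{i\theta}}=-i+\tfrac{i(z-c)}{(z-c)-Re^{i\theta}}$, the circle over $c$ splits into a main part, evaluated by the integral representation of $J_0$, and a remainder with smooth amplitude $g_c(\theta)=\tfrac{i(z-c)}{(z-c)-Re^{i\theta}}$. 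This gives $\widehat{\varphi_R}=M_R+N_R$ with
\[
M_R(\xi)=\frac{-iJ_0(2\pi R|\xi|)}{\xi}\bigl(e^{-2\pi i\xi\cdot u}-e^{-2\pi i\xi\cdot v}\bigr),
\]
and $N_R$ the corresponding difference of the remainders. Since $|e^{-2\pi i\xi\cdot u}-e^{-2\pi i\xi\cdot v}|=2|\sin(\pi\xi\cdot(u-v))|\le2\min(1,\pi|\xi||u-v|)$, the bound $|J_0|\le1$ controls $M_R$ near the origin, while \eqref{eq:bessel_function_at_infinity} yields $|M_R(\xi)|\lesssim_K R^{-1/2}|\xi|^{-3/2}$ for $|\xi|\ge1$. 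For $N_R$ the key is that $\|g_c\|_{C^1}\lesssim_K R^{-1}$ and, crucially, $\int_{-\pi}^\pi g_c\,{\rm d}\theta=0$: the mean-zero property lets me subtract $1$ from the oscillatory exponential and bound $|N_R(\xi)|\lesssim_K1$ for $|\xi|\le1$, while a standard stationary-phase (van der Corput) estimate for $\int g_c(\theta)e^{-2\pi iR|\xi|\cos(\theta-\chi)}\,{\rm d}\theta$ gives $|N_R(\xi)|\lesssim_K R^{-3/2}|\xi|^{-3/2}$ for $|\xi|\ge1$. In particular $|\widehat{\varphi_R}|\lesssim_K1$ on $\{0<|\xi|\le1\}$, $|\widehat{\varphi_R}|^2\lesssim_K R^{-1}|\xi|^{-3}$ on $\{|\xi|\ge1\}$, and $\widehat{\varphi_R}(\xi)\to0$ for each fixed $\xi\ne0$.

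With these bounds the conclusion is routine. On $\{|\xi|\ge1\}$ the integral is $\lesssim_K R^{-1}\int_{|\xi|\ge1}|\xi|^{-3}\,{\rm d}\rho_\La=O_K(R^{-1})$ by \eqref{eq25}; on $\{0<|\xi|\le1\}$ the integrand is bounded by a $K$-dependent constant and tends to $0$ pointwise, while $\rho_\La(\{0<|\xi|\le1\})<\infty$ by local finiteness, so dominated convergence applies. Both estimates are uniform for $u,v,z\in K$, giving the stated local uniform convergence. I expect the remainder $N_R$ to be the crux: near the origin one must use the cancellation $\int g_c=0$ to overcome the $1/\xi$ prefactor, and at infinity one genuinely needs the oscillatory decay rather than the crude $O(R^{-1}|\xi|^{-1})$ bound, since $\rho_\La$ is only translation-bounded and $\int_{|\xi|\ge1}|\xi|^{-2}\,{\rm d}\rho_\La$ may diverge.
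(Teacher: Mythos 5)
Your proposal is correct and follows essentially the same route as the paper: reduce the variance to $\int_{\bC\setminus\{0\}}|\widehat{\varphi_R}|^2\,{\rm d}\rho_\La$ by conditioning on $\mathcal F_{\sf inv}$ (the paper does this via Pythagoras with $Y=\pi(\mathfrak c_\La-c_\La)$, you via the law of total variance), then convert $\widehat{\varphi_R}$ to circle integrals by Cauchy--Green and control them with Bessel asymptotics and the stationary-phase bound of Proposition~\ref{prop:stat-phase}. The only cosmetic differences are that you keep general $z$ instead of reducing to $z=0$, split off the $J_0$ main term from the mean-zero remainder rather than bounding the full boundary integral $g_u^R$ at once, and handle the low-frequency region by dominated convergence instead of the paper's shrinking annulus $\{0<|\xi|\le R^{-1/4}\}$.
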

Although we will need this in the paper, 
we remark that the convergence is locally uniform in $u,v,z$.

In the case when $\La$ is the Poisson point process, Lemma~\ref{lemma:lunar_domains}
was proved by Chatterjee, Peled, Peres and Romik in \cite[Lemma~8]{CPPR}
where they obtain the analogous result
for $d\ge3$, but the same proof works also for $d=2$.

\begin{proof}
By stationarity of $\La$, it suffices to prove the lemma for $z=0$.
We will assume that $R$ is large enough so that
both $u$ and $v$ are contained inside $R\bD$.
By the Cauchy-Green formula it holds that
$\int_{\{|x-u|\le R\}} \frac{{\rm d}m(x)}{x} = \pi \bar u$, and thus
\[
\bE \Big[\sum_{|\la-u|\leq R} \frac{1}{\la}\Big]
= \bE[\mathfrak{c}_\La] \left(\int_{\{|x-u|\leq R\}} \frac{{\rm d}m(x)}{x}\right)
= \pi c_\La \, \bar u.
\]
Introduce the notation
\begin{align*}
X &= \sum_{|\la-u|\leq R} \frac{1}{\la} - \sum_{|\la-v|\leq R} \frac{1}{\la}
- \pi c_\La (\overline{u-v})\, , \quad \text{and} \ \
Y= \pi \big(\mathfrak{c}_{\La} -  c_\La\big).
\end{align*}
Clearly, $\bE[X] = 0$ and $\bE[X\,|\, Y] = Y(\overline{u-v})$, since the
law of the point process $\La$, conditioned
on $Y$, has the intensity $\mathfrak{c}_\La$. Thus,
\begin{align*}
{\sf Var}\left[X - Y(\overline{u-v})\right]
&= \bE\left[\left|X-Y(\overline{u-v})\right|^2\right] \\
&= \bE\left[\left|X-\bE[X\mid Y]\right|^2\right] \\
&= \bE\left|X\right|^2 - \bE\left|Y(\overline{u-v})\right|^2
\qquad \text{(Pythagoras' theorem)} \\
&= {\sf Var}[X] - \pi^2|u-v|^2\, {\sf Var}(\mathfrak{c}_{\La}) \\
&= {\sf Var}[X] - \pi^2|u-v|^2\rho_{\La}(\{0\}),
\qquad \text{(by \eqref{eq:claim1})}.
\end{align*}
Plugging in the definition of $X$ and $Y$ yields that
\begin{align}\label{eq:lunar_domain_variance_split}
\begin{split}
{\sf Var}&\left[\sum_{|\la-u|\leq R} \frac{1}{\la}
- \sum_{|\la-v|\leq R} \frac{1}{\la} -
\pi \mathfrak{c}_\La (\overline{u-v})\right] \\
&= \int_{\bC}\left|\left(\int_{\{|x-u|\leq R\}}
- \int_{\{|x-v|\leq R\}}\right)e^{-2\pi {\rm i} \xi \cdot x}
\frac{{\rm d} m(x)}{x} \right|^2 {\rm d} \rho_{\La}(\xi)
- \pi^2 |u-v|^2 \rho_{\La}(\{0\}) \\
&= \int_{\bC\setminus\{0\}}\left|\left(\int_{\{|x-u|\leq R\}}
- \int_{\{|x-v|\leq R\}}\right)e^{-2\pi {\rm i} \xi \cdot x}
\frac{{\rm d} m(x)}{x} \right|^2 {\rm d} \rho_{\La}(\xi) = I_1 + I_2 \, ,
\end{split}
\end{align}
where,
\begin{align*}
I_1 &\stackrel{{\rm def}}{=} \int_{\{0<|\xi|\leq R^{-1/4}\}}
\left|\left(\int_{\{|x-u|\leq R\}}
- \int_{\{|x-v|\leq R\}}\right)e^{-2\pi {\rm i} \xi \cdot x}
\frac{{\rm d} m(x)}{x} \right|^2 {\rm d}\rho_{\La}(\xi)\, ,
\\ I_2 &\stackrel{{\rm def}}{=} \int_{\{|\xi|\geq R^{-1/4}\}}
\left|\left(\int_{\{|x-u|\leq R\}}- \int_{\{|x-v|\leq R\}}\right)
e^{-2\pi {\rm i} \xi \cdot x} \frac{{\rm d} m(x)}{x} \right|^2 {\rm d}\rho_{\La}(\xi)\, .
\end{align*}
Since the integrand is bounded by some constant
$C=C(u,v)>0$ (independent of $R$), we can bound $I_1$ as
\[
I_1 \lesssim_{u,v} \rho_{\La}\left(\{0<|\xi|\leq R^{-1/4}\}\right)
\xrightarrow{R\to \infty} 0.
\]
To bound the second integral $I_2$, we use the Cauchy-Green
formula to compute the inner integral:
\begin{align*}
\int_{\{|x-u|\leq R\}} e^{-2\pi {\rm i} \xi \cdot x} \frac{{\rm d} m(x)}{x}
&= \frac{1}{-{\rm i}\pi \xi} \int_{\{|x-u|\leq R\}}
\frac{\bar\partial_{x}\left(e^{-2\pi {\rm i} \xi \cdot x}\right)}{x} {\rm d} m(x)
\\ &= \frac{1}{-{\rm i}\xi} \left(\frac{1}{2\pi {\rm i}}
\int_{\{|z-u|= R\}} e^{-2\pi {\rm i} \xi \cdot z}\frac{{\rm d} z}{z} - 1\right)
\\ &= \frac{1}{2\pi \xi} \int_{\{|z-u|= R\}} 
e^{-2\pi {\rm i} \xi \cdot z}\frac{{\rm d} z}{z}
- \frac{1}{{\rm i}\xi}.
\end{align*}
Plugging back the above in the definition of $I_2$ gives us
\[
I_2 = \int_{\{|\xi|\geq R^{-1/4}\}} \left|\frac{g_u^R(\xi)
- g_v^R(\xi)}{2\pi \xi}\right|^2 {\rm d}\rho_{\La}(\xi),
\]
where
\[
g_u^{R}(\xi) \stackrel{{\rm def}}{=} 
\int_{\{|z-u| = R\}}e^{-2\pi {\rm i} \xi \cdot z}
\frac{{\rm d} z}{z} = e^{-2\pi {\rm i} \xi \cdot u}
\int_{0}^{2\pi} e^{-2\pi {\rm i} |\xi|R \cos \theta}
\frac{Re^{{\rm i}\theta}}{Re^{{\rm i}\theta} + u} {\rm d} \theta.
\]
By the standard stationary phase bound
(see Proposition~\ref{prop:stat-phase} in the Appendix),
for any $u\in \bC$ we have 
$\left|g_u^R(\xi)\right| \lesssim_u (1+R|\xi|)^{-1/2}$.
Hence, we see that
\begin{align*}
I_2 \lesssim_{u,v} \int_{\{|\xi|\geq R^{-1/4}\}}
&\frac{{\rm d}\rho_{\La}(\xi)}{|\xi|^2(1+R|\xi|)}
\\ & \lesssim \frac{1}{\sqrt{R}} \, \rho_{\La}(\bD) 
+ \frac{1}{R}\int_{\{|\xi| \ge 1\}}
\frac{{\rm d}\rho_{\La}(\xi)}{|\xi|^3} \xrightarrow{R\to \infty} 0.
\end{align*}
Plugging back the bounds on $I_1$ and $I_2$ into
\eqref{eq:lunar_domain_variance_split}, we get that
\[
\lim_{R\to \infty}{\sf Var}\left[\sum_{|\la-u|\leq R} \frac{1}{\la}
- \sum_{|\la-v|\leq R} \frac{1}{\la} 
- \pi \mathfrak{c}_\La (\overline{u-v})\right]
\leq \limsup_{R\to \infty} \left(I_1 + I_2\right) = 0
\]
which gives the lemma.
\end{proof}

\section{Fields and potentials with stationary increments}
\label{s:stat-increments}

\subsection{The Weierstrass zeta function}
There is an evident analogy with
the classical Weierstrass zeta function 
from the theory of elliptic functions.
Suppose for a moment that $\La$ is
a non-degenerate lattice in $\bC$, then
\[
\zeta_\La (z) = \frac1{z} + \sum_{\la\in\La\setminus\{0\}}
\Bigl( \frac1{z-\la} + \frac1{\la} + \frac{z}{\la^2} \Bigr)\,.
\]
In our context, it is more convenient to use a
different normalization, which goes back to Eisenstein.
Letting
\begin{align*}
\Psi_2 &= \lim_{R\to\infty}\, \sum_{0<|\la|\le R} \frac1{\la^2}\,, \\
\zeta_\La (z) &= \frac1{z} + \sum_{\la\in\La\setminus\{0\}}
\Bigl( \frac1{z-\la} + \frac1{\la} + \frac{z}{\la^2} \Bigr) - \Psi_2 z\,,
\end{align*}
and noting that, for each $R$,
\[
\sum_{0<|\la|\le R} \frac1{\la} = 0,
\]
we find that
\[
\zeta_\La (z)= \lim_{R\to\infty} \sum_{|\la|\le R} \frac1{z-\la}\,.
\]

Let $c_\La$ be the inverse area of the fundamental domain of $\La$. Then,
it is not difficult to show (see Taylor~\cite[Appendix]{Taylor}) that 
{\em the function $\zeta_\La(z) -\pi c_\La\bar z$ is $\La$-invariant}. 
Note that in this case we have the limiting relation
\[
c_\La = \lim_{R\to\infty} \frac{n_\La (R\,\bD)}{\pi R^2}.
\]

It is worth to mention that in~\cite{Taylor} Taylor computed
the Fourier expansions of the functions $\zeta_\La (z+a)-\zeta_\La (z)$ and
$\zeta_\La(z) -\pi c_\La\bar z$.

\subsection{The random Weierstrass zeta function}
\label{s:random-zeta}
We return to the probabilistic setting, and let $\La$ be a stationary
point process with finite second moment, and recall the quantities 
$\Psi_1(R)$ and $\Psi_2(R)$ 
defined in \eqref{eq:def-Psi-ell}. 
Lemmas \ref{lem1} and \ref{lem2} in Section~\ref{s:lemmas}
allow us to define the random meromorphic function
\begin{equation}\label{eq20}
\zeta_\Lambda (z) \stackrel{\rm def}= \sum_{|\la|<1} \frac1{z-\la}
+ \sum_{|\la|\ge 1} \Bigl( \frac1{z-\la}
+ \frac1\la + \frac{z}{\la^2} \Bigr) - \Psi_2(\infty) z\,,
\end{equation}
where $\Psi_2(\infty)=\lim_{R\to\infty}\Psi_2(R)$.
Note that, for any $R>1$,
\begin{equation}\label{eq27}
\zeta_\La (z) = \sum_{|\la|\le R} \frac1{z-\la}
+ \Psi_1(R) + \bigl( \Psi_2(R) - \Psi_2(\infty) \bigr) z
+ \sum_{|\la|> R} \frac{z^2}{\la^2(z-\la)}\,.
\end{equation}
Again by Lemmas~\ref{lem1} and \ref{lem2}, the last two terms on
the right-hand side of \eqref{eq27} tend to zero as $R\to\infty$,
where the convergence is in $L^2(\Omega, \bP)$ and is locally uniform in $z$.
This hints that the function $\zeta_\La$ is not too far from being a stationary one.

\begin{theorem}\label{thm1}
Let $\La$ be a stationary point process in $\bC$ having a finite second moment.
Then the random meromorphic function $\zeta_\La$, as defined in~\eqref{eq20},
has stationary increments. That is, for any $a\in\bC$,
the distribution of the random meromorphic function
${\sf\Delta}_a \zeta_\La (z) = \zeta_\La(z+a)-\zeta_\La(z)$ is stationary.
\end{theorem}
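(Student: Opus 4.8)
The plan is to reduce the statement to the stationarity of the underlying point process by showing that, although the centering constants $\Psi_1(R)$ and $\Psi_2(\infty)$ destroy the stationarity of $\zeta_\La$ itself, they cancel once we pass to increments. Concretely, I would first extract a clean representation of ${\sf\Delta}_a\zeta_\La$ as a radially truncated sum, then show that this representation is insensitive to the center of truncation, and finally transfer stationarity from $\La$ through a change of variables.

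First I would apply ${\sf\Delta}_a$ to the identity \eqref{eq27}. Since $\Psi_1(R)$ does not depend on $z$, it cancels in the difference $\zeta_\La(z+a)-\zeta_\La(z)$; the linear term contributes $(\Psi_2(R)-\Psi_2(\infty))a$, and the tail $\sum_{|\la|>R}$ contributes a difference of two remainders. By Lemma~\ref{lem2} the term $(\Psi_2(R)-\Psi_2(\infty))a\to0$ in $L^2(\Omega,\bP)$, and by Lemmas~\ref{lem1} and \ref{lem2} the tail difference tends to $0$ in $L^2$, locally uniformly in $z$, exactly as in the discussion following \eqref{eq27}. This yields the working representation
\[
{\sf\Delta}_a\zeta_\La(z)=\lim_{R\to\infty}\sum_{|\la|\le R}\Bigl(\frac1{z+a-\la}-\frac1{z-\la}\Bigr),
\]
the limit being taken in $L^2(\Omega,\bP)$ and locally uniform in $z$.

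The heart of the argument, and the step I expect to be the main obstacle, is to show that this limit is independent of the center of truncation: for every $w\in\bC$,
\[
\lim_{R\to\infty}\sum_{|\la-w|\le R}\Bigl(\frac1{z+a-\la}-\frac1{z-\la}\Bigr)={\sf\Delta}_a\zeta_\La(z)\quad\text{in }L^2(\Omega,\bP).
\]
To see this I would write the difference of the two truncated sums as $A_R-B_R$, where $A_R$ compares the two centers at the evaluation point $z+a$ and $B_R$ at the evaluation point $z$. By Lemma~\ref{lemma:lunar_domains} (applied with $u=w$, $v=0$), together with the elementary mean computation $\bE[A_R]=\bE[B_R]=-\pi c_\La\overline w$ coming from $\int_{\{|x-u|\le R\}}\frac{{\rm d}m(x)}{x}=\pi\bar u$, both $A_R$ and $B_R$ converge in $L^2$ to the same limit $-\pi\mathfrak c_\La\overline w$, independently of the evaluation point. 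Hence $A_R-B_R\to0$. This cancellation is precisely the mechanism that separates \emph{stationary increments} from genuine stationarity: the non-vanishing correction $-\pi\mathfrak c_\La\overline w$ is exactly the obstruction to $\zeta_\La$ itself being stationary, and it drops out of the increment only because it does not depend on $z$.

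With center-independence in hand, stationarity follows by transport. Fixing $w$ and truncating at center $0$, I would recenter the truncation at $w$ and substitute $\mu=\la-w$, noting that $\la$ ranges over $\La$ with $|\la-w|\le R$ precisely when $\mu$ ranges over $T_w\La$ with $|\mu|\le R$; since $(z+w)+a-\la=z+a-\mu$ and $(z+w)-\la=z-\mu$, this gives the pathwise ($L^2$) identity
\[
{\sf\Delta}_a\zeta_\La(z+w)={\sf\Delta}_a\zeta_{T_w\La}(z).
\]
Finally, $\La\stackrel{\rm d}{=}T_w\La$ by stationarity of the point process, and $\La\mapsto{\sf\Delta}_a\zeta_\La$ is a fixed measurable functional of the configuration (an $L^2$-limit of measurable functionals). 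Therefore ${\sf\Delta}_a\zeta_{T_w\La}\stackrel{\rm d}{=}{\sf\Delta}_a\zeta_\La$, and combining with the previous identity gives ${\sf\Delta}_a\zeta_\La(\cdot+w)\stackrel{\rm d}{=}{\sf\Delta}_a\zeta_\La(\cdot)$ for every $w$. Matching these equalities across finite collections of evaluation points (avoiding the random poles, which is an almost sure condition) identifies the finite-dimensional distributions, and hence the law, of the random meromorphic increment field, completing the proof.
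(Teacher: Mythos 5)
Your proposal is correct and follows essentially the same route as the paper: the decomposition \eqref{eq27} together with Lemmas~\ref{lem1} and \ref{lem2} to realize ${\sf\Delta}_a\zeta_\La$ as an $L^2(\Omega,\bP)$-limit of truncated sums, and the lunar lemma (Lemma~\ref{lemma:lunar_domains}) to control the effect of moving the center of truncation, with the $\pi\mathfrak c_\La$-corrections cancelling in the increment. The only difference is in the packaging of the final step: the paper recenters the truncation at the evaluation points $z$ and $z+a$, so that each finite-$R$ approximant is manifestly stationary, and then invokes Claim~\ref{claim:l_2_convergence_implies_convergence_in_dsitribution}, whereas you recenter at a fixed $w$ and transfer stationarity through the equivariance identity ${\sf\Delta}_a\zeta_\La(z+w)={\sf\Delta}_a\zeta_{T_w\La}(z)$ --- the two arguments are equivalent.
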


\begin{proof}
We have
\begin{equation*}
\zeta_\La (z)  \\
= \sum_{|\la|\le R} \frac1{z-\la} + \Psi_1(R) +
(\Psi_2(R)-\Psi_2(\infty))z + \chi_R(\La,z)z^2\,,
\end{equation*}
where
\[
\chi_R(\La,z) = \sum_{|\la|> R} \frac{1}{\la^2(z-\la)}.
\]
Hence, by Lemmas~\ref{lem1} and \ref{lem2},
\begin{equation*}
\lim_{R\to \infty} \bE \, \bigg|\zeta_\La (z)
- \sum_{|\la|\le R} \frac{1}{z-\la} - \Psi_1(R)\bigg|^2 = 0\, ,
\end{equation*}
for any $z\in \bC$ fixed. The above, 
together with the \say{lunar lemma}
(Lemma~\ref{lemma:lunar_domains}), gives us
\begin{equation}
\label{eq:pointwise_L_2_limit_for_zeta}
\lim_{R\to \infty} \bE \, \bigg|\zeta_\La (z)
- \sum_{|\la-z|\le R} \frac{1}{z-\la}
- \pi \mathfrak{c}_\La \bar z - \Psi_1(R) \bigg|^2 = 0 \, ,
\end{equation}
and therefore,
\[
\lim_{R\to \infty} \bE\, \bigg|\zeta_{\La}(z+a) - \zeta_{\La}(z)
- \sum_{|\la-z-a|\le R} \frac{1}{z+a-\la}
+ \sum_{|\la-z|\le R} \frac{1}{z-\la}-\pi\mathfrak{c}_\La\bar a\bigg|^2=0
\]
for all $z,a\in \bC$.

Since $\La$ is stationary and
$\mathfrak{c}_\La=\mathfrak{c}_{T_a\La}$, 
for each $R\ge 1$ the random functions
\[
z \mapsto \sum_{|\la-z-a|\le R} \frac{1}{z+a-\la}
-\sum_{|\la-z|\le R} \frac{1}{z-\la}
+ \pi \mathfrak{c}_\La \bar a
\]
are stationary. But then the limiting random function 
$\zeta_\La(z+a)-\zeta_\La(z)$ 
is stationary as well (see 
Claim~\ref{claim:l_2_convergence_implies_convergence_in_dsitribution} below,
which we record separately for later use). 
In fact, the proof of the claim shows that
\begin{equation}
\label{eq:zeta-a-def1}
\zeta_\La(\cdot+a) - \zeta_\La(\cdot)=H_a(T_z\La)
\end{equation}
where $H_a$ is the $L^2(\Omega,\bP)$-limit 
\begin{equation}
\label{eq:zeta-a-def2}
H_a(\La)=\lim_{R\to\infty}\sum_{|\la|\le R}\Big(\frac{1}{a-\la}
+\frac{1}{\la}\Big).
\end{equation}
This completes the proof, modulo the proof of 
Claim~\ref{claim:l_2_convergence_implies_convergence_in_dsitribution}.
\end{proof}

\begin{claim}
\label{claim:l_2_convergence_implies_convergence_in_dsitribution}
For $R> 0$, let $f_{R,\La}$
be stationary random functions, and
assume that there exist a random function
$f_{\infty,\La}$ such that
\[
\lim_{R\to \infty } \bE\left[\left|f_{R,\La}(z) 
- f_{\infty,\La}(z)\right|^2\right]=0
\]
for all $z\in \bC$. Then $f_{\infty,\La}$ is stationary as well.
\end{claim}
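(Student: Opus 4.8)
The plan is to reduce the stationarity of $f_\infty$ to the translation-invariance of its finite-dimensional distributions, and to obtain the latter as a weak limit of the corresponding (already translation-invariant) distributions of the $f_R$. Recall that a random function is stationary precisely when, for every $a\in\bC$ and every finite tuple $z_1,\dots,z_k\in\bC$, the law of the vector $\bigl(f(z_1+a),\dots,f(z_k+a)\bigr)$ coincides with that of $\bigl(f(z_1),\dots,f(z_k)\bigr)$. Since each evaluation point $z_j$ is almost surely not a pole of any of the meromorphic functions in play (the poles sit at the points of $\La$, and a fixed point lies in $\La$ with probability zero), all these vectors take values in $\bC^k$ almost surely, so we may work with genuine complex random variables rather than $\bC\cup\{\infty\}$-valued ones.

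First I would fix $a$ and $z_1,\dots,z_k$ and observe that the hypothesis $\bE\bigl[|f_R(z_j)-f_\infty(z_j)|^2\bigr]\to 0$ gives, by Chebyshev's inequality, convergence in probability $f_R(z_j)\to f_\infty(z_j)$ for each $j$. Coordinatewise convergence in probability upgrades to convergence in probability of the vectors $\bigl(f_R(z_1),\dots,f_R(z_k)\bigr)\to\bigl(f_\infty(z_1),\dots,f_\infty(z_k)\bigr)$ in $\bC^k$, and hence to convergence in distribution. Applying the same reasoning to the translated points $z_1+a,\dots,z_k+a$ yields $\bigl(f_R(z_1+a),\dots,f_R(z_k+a)\bigr)\to\bigl(f_\infty(z_1+a),\dots,f_\infty(z_k+a)\bigr)$ in distribution.

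Now I would invoke the stationarity of each $f_R$: for every $R$ the two vectors $\bigl(f_R(z_1+a),\dots,f_R(z_k+a)\bigr)$ and $\bigl(f_R(z_1),\dots,f_R(z_k)\bigr)$ have the same law. Their weak limits must therefore agree, and by uniqueness of weak limits we conclude that $\bigl(f_\infty(z_1+a),\dots,f_\infty(z_k+a)\bigr)$ and $\bigl(f_\infty(z_1),\dots,f_\infty(z_k)\bigr)$ are equal in distribution. Since $a$ and the tuple $z_1,\dots,z_k$ were arbitrary, all finite-dimensional distributions of $f_\infty$ are translation-invariant, which is exactly the stationarity we seek.

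The argument is essentially soft, and the only points requiring care are bookkeeping rather than analysis: one must confirm that fixed evaluation points avoid the poles almost surely, so that the $L^2$ statement is meaningful and the finite-dimensional laws live on $\bC^k$, and one must use that coordinatewise convergence in probability implies joint convergence in probability, so that joint weak convergence is available. The main—and very mild—obstacle is simply to pin down the definition of stationarity for a $\bC\cup\{\infty\}$-valued random function and to verify that the finite-dimensional-distribution formulation is the correct one to propagate through the $L^2$ limit; no quantitative estimate is needed.
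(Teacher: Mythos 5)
Your proof is correct and follows essentially the same route as the paper: both reduce stationarity to translation-invariance of finite-dimensional distributions and obtain the law of $\bigl(f_\infty(z_1),\dots,f_\infty(z_k)\bigr)$ as the weak limit of the laws of $\bigl(f_R(z_1),\dots,f_R(z_k)\bigr)$, using stationarity of each $f_R$ and uniqueness of weak limits. The only cosmetic difference is that you pass from $L^2$ convergence to convergence in distribution via Chebyshev and convergence in probability, whereas the paper estimates the difference of characteristic functions directly; both are standard and equivalent here.
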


\begin{proof}
By the definition of stationarity, there exist random variables $h_R$
such that, for $\bP$-a.e.\ $\La\in \Omega$ and for any $z\in\bC$, we have that
\[
f_{R,\La}(z)=h_R(T_z\La).
\] 
By assumption, there exists another
random variable, $h_\infty$, such that 
\[
\bE\left[|h_{R}(\La)-h_{\infty}(\La)|^2\right]
=\bE\left[|f_{R,\La}(0)-h_\infty(\La)|^2\right]\to 0,\qquad R\to\infty.
\]
Moreover, by the invariance of $\bP$ we have $(h_R-h_\infty)\circ T_z\to 0$ as well.
We claim that for a.e.\ $\La\in\Omega$ and for any $z\in\bC$,
\begin{equation}
\label{eq:equiv-rep-finfty}
f_{\infty,\La}(z)=h_\infty(T_z\La).
\end{equation}
Indeed, wherever $h_\infty$ is defined, 
\begin{align*}
f_{\infty,\La}(z)-h_\infty(T_z\La)&=f_{\infty,\La}(z)-f_{R,\La}(z)
+f_{R,\La}(z)-h_{R}(T_z\La) + h_{R}(T_z\La)-h_\infty(T_{z}\La)\\
&=f_{\infty,\La}(z)-f_{R,\La}(z)+h_{R}(T_z\La)-h_\infty(T_{z}\La).
\end{align*}
Since both $f_{\infty,\La}(z)-f_{R,\La}(z)$ and $(h_R-h_\infty)\circ T_z$ tend to
$0$ in $L^2(\Omega,\bP)$ and since $R>0$ was arbitrary, the right-hand side must vanish.
Consequently, the representation \eqref{eq:equiv-rep-finfty} for $f_{\infty,\La}$ follows.
\end{proof}

As a corollary to Theorem~\ref{thm1}, we observe that
the distribution of the random meromorphic function
\[
\wp_\La (z) \stackrel{\rm def}= - \zeta'_\La (z)
= \lim_{R\to\infty}\, \sum_{|\la|\le R} \frac1{(z-\la)^2}
\]
is stationary (as above, the convergence is in $L^2(\Omega, \bP)$
and is locally uniform in $z$). To obtain an equivariant 
representation of $\wp_\La$ similar to \eqref{eq:zeta-a-def1}
it suffices to note that
\begin{equation}
\label{eq:psi2-abs-lunar}
\limsup_{R\to\infty}\bE\Big|\sum_{|\la|\le R}\frac{1}{\la^2}
-\sum_{|\la-z|\le R}\frac{1}{\la^2}\Big|^2 \le 
\limsup_{R\to\infty}\frac{2}{(R-|z|)^4}\bE \big|n_\La(\bD(0,R)\setminus \bD(z,R)\big)\big|^2
=0,
\end{equation}
where the last equality follows from the crude bound
\begin{equation}
\label{eq:crude-bd-secondmoment}
\bE\big[n_\La\big(\bD(0,R)\setminus \bD(z,R)\big)\big]^2
\lesssim_zR^2,
\end{equation}
which in turn is obtained by a simple covering argument and the triangle inequality.
This yields the representation 
$\wp_\La(z)=P(T_z\La)$, where
\begin{equation}
\label{eq:stationary-rep-V}
P(\La)=\lim_{R\to\infty}\sum_{|\la|\le R}\frac{1}{\la^2}.
\end{equation}

Looking ahead a little, we note that the representation \eqref{eq27} of $\zeta_\La$
suggests that existence of the
$L^2(\Omega, \bP)$-limit
\begin{equation}\label{eq29}
\lim_{R\to\infty} \Psi_1(R) = \lim_{R\to\infty}\, \sum_{1\le |\la|\le R} \frac1{\la}
\end{equation}
becomes equivalent to existence of a stationary vector field
$V_\La (z)$. In its turn, it appears that existence of the
limit~\eqref{eq29} is easy to check on the spectral side.

\section{The invariant field}
\label{s:stat}
\subsection{Existence}
\begin{theorem}\label{thm2}
Let $\Lambda$ be a stationary point process in
$\bC$ having a finite second moment,
and denote by $\mathfrak c_\La$ the conditional
intensity of $\La$ on translation-invariant events.
Then the following are equivalent:

\smallskip\noindent {\rm (a)}
The spectral measure $\rho_\La$ satisfies
\[
\int_{\{0<|\xi|\le 1\}} \frac{{\rm d}\rho_\La (\xi)}{|\xi|^2} < \infty\,.
\]

\smallskip\noindent {\rm (b)}
The sum
\[
\Psi_1(R) = \sum_{1\leq |\la| \leq R} \frac{1}{\la}
\]
converges in $L^2(\Omega, \bP)$ to the limit $\Psi_1(\infty)$.

\smallskip\noindent {\rm (c)}
For some constant $\Psi\in L^2(\Omega,\bP)$, the random field
$\zeta_\La(z)-\Psi-\pi \mathfrak{c}_\La \bar{z}$
is stationary.
\end{theorem}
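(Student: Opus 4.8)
The plan is to prove the three conditions equivalent cyclically, via (a) $\Rightarrow$ (b) $\Rightarrow$ (c) $\Rightarrow$ (a). The implication (a) $\Rightarrow$ (b) is exactly Lemma~\ref{lem3}, so nothing new is needed there. I would also record the elementary reverse (b) $\Rightarrow$ (a), which makes (a) $\Leftrightarrow$ (b) self-contained and isolates (c) $\Rightarrow$ (a) as the real content: in the exact formula \eqref{eq:formula_variance_psi_1}, taking the fixed value $R=1$ (so that $\Psi_1(1)=0$) and letting $R'\to\infty$, one has $J_0(2\pi R'|\xi|)\to 0$ pointwise on $\bC\setminus\{0\}$, so Fatou's lemma gives $\int_{\bC\setminus\{0\}}J_0(2\pi|\xi|)^2\,|\xi|^{-2}\,{\rm d}\rho_\La\le\liminf_{R'\to\infty}\bE|\Psi_1(R')|^2<\infty$. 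Since $J_0(2\pi|\xi|)^2\to 1$ as $\xi\to 0$, this bounds $\int_{0<|\xi|\le\delta}|\xi|^{-2}\,{\rm d}\rho_\La$ for $\delta$ small, while the annulus $\delta<|\xi|\le 1$ is controlled by translation-boundedness of $\rho_\La$; hence (a).

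For (b) $\Rightarrow$ (c), I would set $\Psi=\Psi_1(\infty)$ and reuse the machinery in the proof of Theorem~\ref{thm1}. By \eqref{eq:pointwise_L_2_limit_for_zeta}, for each fixed $z$ the random variables $Y_R(z):=\sum_{|\la-z|\le R}(z-\la)^{-1}$ converge in $L^2(\Omega,\bP)$, as $R\to\infty$, to $\zeta_\La(z)-\Psi-\pi\mathfrak c_\La\bar z$. Each field $z\mapsto Y_R(z)$ is stationary, since translating the center and the process together gives $Y_R(z+w)[\La]=Y_R(z)[T_w\La]$ and $T_w\La\overset{d}{=}\La$. Thus Claim~\ref{claim:l_2_convergence_implies_convergence_in_dsitribution} applies to the pointwise $L^2$-limit and shows that $W(z):=\zeta_\La(z)-\Psi-\pi\mathfrak c_\La\bar z$ is stationary, which is (c).

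The heart of the matter is (c) $\Rightarrow$ (a), and the plan is to read off the spectral measure of $W$ from the differential equation it satisfies. Since $\bar\partial(z-\la)^{-1}=\pi\delta_\la$ while $\bar\partial\bar z=1$, the holomorphic correction terms in \eqref{eq20} are annihilated and one obtains the distributional identity $\bar\partial W=\pi(n_\La-\mathfrak c_\La m)$. On the spectral side the Fourier multiplier of $\bar\partial$ is $\pi{\rm i}\xi$ (in complex frequency), so the spectral measure of $\bar\partial W$ is $\pi^2|\xi|^2\,{\rm d}\rho_W$, where $\rho_W$ is the spectral measure of the stationary field $W$. On the other hand, a short computation with \eqref{eq21} together with ${\sf Cov}[n_\La(\phi),\mathfrak c_\La]=\widehat\phi(0)\,\rho_\La(\{0\})$ — which follows from $\bE[n_\La(\phi)\mid\mathcal F_{\sf inv}]=\mathfrak c_\La\widehat\phi(0)$ and \eqref{eq:claim1} — gives ${\sf Var}[n_\La(\phi)-\mathfrak c_\La\widehat\phi(0)]=\int_{\bC\setminus\{0\}}|\widehat\phi|^2\,{\rm d}\rho_\La$; that is, the spectral measure of $n_\La-\mathfrak c_\La m$ is $\rho_\La$ restricted to $\bC\setminus\{0\}$. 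Comparing the two descriptions of the spectral measure of $\bar\partial W$ forces $|\xi|^2\,{\rm d}\rho_W={\rm d}\rho_\La$ on $\bC\setminus\{0\}$. Since the spectral measure of any stationary field is translation-bounded, $\rho_W(\{|\xi|\le 1\})<\infty$, and therefore $\int_{0<|\xi|\le 1}|\xi|^{-2}\,{\rm d}\rho_\La=\rho_W(\{0<|\xi|\le 1\})<\infty$, which is (a).

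The step I expect to demand the most care is turning ``$W$ is stationary'' into ``$W$ has a locally finite spectral measure $\rho_W$'' and doing the frequency bookkeeping cleanly near the origin. Concretely, I would only test $W$ against mean-zero $\phi\in\mathfrak D$ — equivalently against $\bar\partial\phi$, which always integrates to zero — because for such $\phi$ the smeared value $W(\phi)=\zeta_\La(\phi)-\pi\mathfrak c_\La\int\bar z\,\phi\,{\rm d}m$ lies in $L^2(\Omega,\bP)$: the Cauchy transform of a mean-zero $\phi$ has Fourier transform bounded near the origin, so the resulting linear statistic is $\rho_\La$-square-integrable, and $\mathfrak c_\La\in L^2$ by \eqref{eq:claim1}. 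For general $\phi$ the term carrying $\Psi_1(R)$ need not converge, but restricting to mean-zero test functions is harmless, since the relation $|\xi|^2\rho_W=\rho_\La$ is only probed on $\bC\setminus\{0\}$, precisely the region entering (a). The remaining technical point is to invoke the Gelfand--Vilenkin/Bochner--Schwartz theory already used in Section~\ref{ss:spectral} to guarantee that the covariance of the stationary generalized field $W$ is represented by a translation-bounded measure $\rho_W$, so that no mass escapes to the origin and the bound $\rho_W(\{0<|\xi|\le 1\})<\infty$ is legitimate.
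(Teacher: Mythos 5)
Your overall architecture coincides with the paper's: (a) $\Rightarrow$ (b) is Lemma~\ref{lem3}, (b) $\Rightarrow$ (c) runs through \eqref{eq:pointwise_L_2_limit_for_zeta} and Claim~\ref{claim:l_2_convergence_implies_convergence_in_dsitribution} exactly as in the text, and your (c) $\Rightarrow$ (a) rests on the same two pillars as the paper's proof: the identity $\bar\partial W=\pi(n_\La-\mathfrak c_\La m)$ transferred to spectral measures (Claim~\ref{claim:Yaglom}), plus local finiteness of $\rho_W$. The extra implication (b) $\Rightarrow$ (a) via Fatou in \eqref{eq:formula_variance_psi_1} is correct but logically redundant given the cycle.

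The gap is in your implementation of (c) $\Rightarrow$ (a). You restrict to mean-zero test functions because you worry that \say{the term carrying $\Psi_1(R)$ need not converge} for general $\varphi$; but in the decomposition \eqref{eq:split_in_zeta_phi} the radius $R$ is a \emph{fixed} parameter chosen so that ${\sf spt}(\varphi)\subset\frac12 R\bD$, no limit $R\to\infty$ is taken, and $|\Psi_1(R)|\le n_\La(R\bD)\in L^2(\Omega,\bP)$; hence $\zeta_\La(\varphi)\in L^2(\Omega,\bP)$ for \emph{every} $\varphi\in\mathfrak{D}$, unconditionally. This is not a cosmetic point: the restriction to mean-zero $\varphi$ destroys exactly the conclusion you need. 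If you only control the covariance form on $\{\varphi:\widehat\varphi(0)=0\}$, then since $|\widehat\varphi(\xi)|^2=O(|\xi|^2)$ near the origin for such $\varphi$, the Bochner--Schwartz representation only forces $\int_{\{|\xi|\le 1\}}|\xi|^2\,{\rm d}\rho_W<\infty$, i.e.\ $\int_{\{0<|\xi|\le1\}}{\rm d}\rho_\La<\infty$, which is vacuous, rather than $\rho_W(\{0<|\xi|\le1\})<\infty$, which is condition (a). (This is precisely the difference between a stationary field and one with merely stationary increments: for the Poisson process, $\zeta_\La$ tested against mean-zero functions is perfectly square-integrable with covariance measure $|\xi|^{-2}\rho_\La$ off the origin, and yet (a) fails.) The translation-boundedness of $\rho_W$ that closes your argument is only available once $W(\varphi)\in L^2(\Omega,\bP)$ for all $\varphi\in\mathfrak{D}$, which is what the paper establishes first via \eqref{eq:split_in_zeta_phi} before invoking Claim~\ref{claim:Yaglom}. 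Once you replace your mean-zero reduction by that easy, unconditional $L^2$ bound, your argument matches the paper's.
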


If any of the three conditions {\rm (a)--(c)} hold,
we choose the particular normalization
\begin{equation}
\label{eq:def-V}
V_\Lambda(z)=\zeta_\La(z)-\Psi_1(\infty)-\pi \mathfrak{c}_\La \bar{z}
\end{equation}
for the stationary random field.

We remark that in Condition {\rm (b)}, it is in fact sufficient 
to assume that $\Psi_1(R_j)$ is
convergent in $L^2(\Omega,\bP)$ along a sequence $R_j\to\infty$. 
Indeed, one can show that this condition directly implies {\rm (a)}.
We will not pursue the details here.

\begin{proof}
The implication {\rm (a)} $\Rightarrow$ {\rm (b)} is exactly Lemma~\ref{lem3}.
It remains to prove the implications {\rm (b)} $\Rightarrow$ {\rm (c)}
and {\rm (c)} $\Rightarrow$ {\rm (a)}.

\medskip

\noindent \underline{{\rm (b)} $\Rightarrow$ {\rm (c)}.}
We let $\Psi=\Psi_1(\infty)$ and proceed to show that $V_\Lambda$, as given in
\eqref{eq:def-V}, is stationary.
By the relation \eqref{eq:pointwise_L_2_limit_for_zeta} combined with
Condition (b) in the theorem, we get
\[
\lim_{R\to \infty} \bE\, \bigg|\, V_\La(z)
-\sum_{|\la - z|\leq R} \frac{1}{z-\la} \, \bigg|^2 = 0\, .
\]
Since $\La$ is stationary, the random functions
\[
z\mapsto \sum_{|\la - z|\leq R} \frac{1}{z-\la}
\]
are stationary for all $R\ge 1$.
The stationarity of $V_\La$ then follows from
Claim~\ref{claim:l_2_convergence_implies_convergence_in_dsitribution}. 

\medskip

\noindent \underline{{\rm (c)} $\Rightarrow$ {\rm (a)}.}
First we prove that, for any $\varphi\in \mathfrak D$, the random variable
\[
\zeta_{\La}(\varphi) \stackrel{{\rm def}}{=} \int_{\bC} \zeta_{\La} \varphi \, {\rm d}m
\]
is in $L^2(\Omega, \bP)$. Let $R$ be large enough so that
$\varphi$ is supported in $\frac{1}{2}R\bD$. Then
\begin{multline}\label{eq:split_in_zeta_phi}
\zeta_{\La}(\varphi)
= \sum_{|\la|\leq R} \mathcal C_{\varphi}(\la)
+\int_{\bC} \chi_{R}(\La,z) z^2 \,\varphi(z)\, {\rm d}m(z)
\\ \qquad + \Psi_1(R)\int_{\bC} \varphi(z)\, {\rm d}m(z)
+ \bigl(\Psi_2(R)-\Psi_2(\infty)\bigr)\int_{\bC}z\, \varphi(z)\,{\rm d}m(z),
\end{multline}
where $C_\varphi$ is the Cauchy transform of $\varphi$, and 
\[
\chi_R(\La,z) = \sum_{|\la|> R} \frac{1}{\la^2(z-\la)}.
\]
Since the Cauchy transform $\mathcal C_\varphi$ is a bounded on $\bC$,
$\Big|\sum_{|\la|\leq R} \mathcal C_{\varphi}(\la)\Big| \lesssim_{\varphi} n_{\La}(R\bD)$,
which implies that the first term on the right-hand side of \eqref{eq:split_in_zeta_phi}
belongs to $L^2(\Omega,\bP)$.
By Lemma~\ref{lem1}, we know that
\[
\sup_{z\in {\sf spt}(\varphi)} \bE\left|\chi_R(\La,z)\right|^2
\leq 2\sum_{|\la|\ge R} \frac{1}{|\la|^3}<\infty
\]
which implies that the second term in the sum is in $L^2(\Omega,\bP)$.
The random variable $\Psi_1(R)$ satisfies the bound
\[
|\Psi_1(R)|\le \sum_{1\le |\la|\le R}\frac{1}{|\la|}\le n_\La(\bD(0,R)),
\]
and the right-hand side has finite second moment by assumption,
so we conclude that the third term on the
right-hand side of \eqref{eq:split_in_zeta_phi}
also lies in $L^2(\Omega,\bP)$.
Finally, Lemma~\ref{lem2} tells us that the last
term on the right-hand side of \eqref{eq:split_in_zeta_phi}
is in $L^2(\Omega,\bP)$ as well,
and all together we get that $\zeta_{\La}(\varphi)\in L^2(\Omega,\bP)$.

Let now $\aV_\La(z)=\zeta_\La(z)-\Psi-\pi \mathfrak{c}_\La \bar{z}$.
Then $\aV_\La(\varphi)\in L^2(\bP)$
for any test function $\varphi$, which implies that $\aV_\La$ has a spectral measure
$\rho_{\aV_\La}$. In view of \cite[Theorem~5]{Yaglom}, the identity
$\bar\partial \aV_\La=\pi(n_\La-\mathfrak{c}_\La m)$ gives the relation
\begin{equation}
\label{eq:relation_between_spectral_V_and_lambda}
\rho_{\La} = |\xi|^2 \, \rho_{\aV_\La}\quad \text{on } \, \bC\setminus\{0\}.
\end{equation}
Since $\rho_{\aV_\La}$ is locally finite, the result then follows
by solving for $\rho_{\aV_\La}$ in \eqref{eq:relation_between_spectral_V_and_lambda}.
For the reader's convenience, we sketch a proof of
\eqref{eq:relation_between_spectral_V_and_lambda}.

\begin{claim}
\label{claim:Yaglom}
Assume that $\aV$ is a stationary random function such that
for some random constant $c\in L^2(\Omega,\bP)$
\[
\bar\partial\aV=\pi (n_\La-c\,m)
\]
in the sense of distributions, and suppose
moreover that $\aV(\varphi)\in L^2(\Omega,\bP)$ for any $\varphi\in \mathfrak{D}$.
Then
\begin{equation}
\rho_{\La} = |\xi|^2 \, \rho_{\aV_\La}\qquad \text{on } \, \bC\setminus\{0\}.
\end{equation}
\end{claim}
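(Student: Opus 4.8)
The plan is to pass to the spectral side and match the two quadratic forms coming from the Parseval formulas for $\aV=\aV_\La$ and for $n_\La$. Both forms are available: the one for $n_\La$ is \eqref{eq21}, while the hypothesis $\aV(\varphi)\in L^2(\Omega,\bP)$ for every $\varphi\in\mathfrak D$ guarantees, by the same Bochner-type theorem of Gelfand--Vilenkin~\cite{GV}, that $\aV$ is a stationary generalized process with a tempered spectral measure $\rho_{\aV_\La}$ satisfying ${\sf Cov}[\aV(\varphi),\aV(\psi)]=\int_\bC\widehat\varphi\,\overline{\widehat\psi}\,{\rm d}\rho_{\aV_\La}$. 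As in Remark~\ref{rem:L2-extension}, I would first extend both Parseval identities from $\mathfrak D$ to the Schwartz class $\mathcal S$; this is what will let me use test functions whose Fourier transforms are supported away from the origin.

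The computational heart is the Fourier multiplier of $\bar\partial$: with the paper's normalization one checks $\widehat{\bar\partial\varphi}(\xi)=\pi{\rm i}\,\xi\,\widehat\varphi(\xi)$ for $\varphi\in\mathcal S$. I would test the distributional identity $\bar\partial\aV=\pi(n_\La-c\,m)$ against $\varphi\in\mathcal S$ with $\widehat\varphi(0)=0$ (equivalently $\int\varphi\,{\rm d}m=0$). Then the random-constant term drops, since $(c\,m)(\varphi)=c\,\widehat\varphi(0)=0$, and integration by parts gives $\aV(\bar\partial\varphi)=-\aV(\bar\partial\varphi)\cdot(-1)=-\pi\,n_\La(\varphi)$, i.e.\ $\aV(\bar\partial\varphi)=-\pi\,n_\La(\varphi)$. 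Taking the covariance of this relation against the corresponding one for $\psi\in\mathcal S$ with $\widehat\psi(0)=0$, and inserting the two Parseval formulas together with the multiplier, yields
\[
\pi^2\!\int_\bC|\xi|^2\,\widehat\varphi\,\overline{\widehat\psi}\,{\rm d}\rho_{\aV_\La}
={\sf Cov}\bigl[\aV(\bar\partial\varphi),\aV(\bar\partial\psi)\bigr]
=\pi^2\,{\sf Cov}\bigl[n_\La(\varphi),n_\La(\psi)\bigr]
=\pi^2\!\int_\bC\widehat\varphi\,\overline{\widehat\psi}\,{\rm d}\rho_\La .
\]
Cancelling $\pi^2$, and using that $\widehat\varphi,\widehat\psi$ vanish at the origin to replace $\bC$ by $\bC\setminus\{0\}$ on both sides, I obtain the identity of quadratic forms $\int_{\bC\setminus\{0\}}|\xi|^2\,\widehat\varphi\,\overline{\widehat\psi}\,{\rm d}\rho_{\aV_\La}=\int_{\bC\setminus\{0\}}\widehat\varphi\,\overline{\widehat\psi}\,{\rm d}\rho_\La$.

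To conclude, I would specialize: for arbitrary $g_1,g_2\in C_c^\infty(\bC\setminus\{0\})$ set $\varphi=\check g_1$ and $\psi=\check g_2$ in $\mathcal S$, so that $\widehat\varphi=g_1,\ \widehat\psi=g_2$ and $\widehat\varphi(0)=\widehat\psi(0)=0$ automatically. This gives $\int|\xi|^2 g_1\overline{g_2}\,{\rm d}\rho_{\aV_\La}=\int g_1\overline{g_2}\,{\rm d}\rho_\La$ for all such $g_1,g_2$. The linear span of the products $g_1\overline{g_2}$ forms a conjugation-closed subalgebra of $C_c(\bC\setminus\{0\})$ that separates points, hence is dense (Stone--Weierstrass); since $|\xi|^2$ is continuous and bounded away from $0$ and $\infty$ on each compact subset of $\bC\setminus\{0\}$, the two locally finite measures $|\xi|^2\rho_{\aV_\La}$ and $\rho_\La$ therefore coincide on $\bC\setminus\{0\}$, which is the assertion.

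I expect the main obstacle to be the functional-analytic bookkeeping rather than the algebra. One must justify that the Parseval identity for $\aV$ and the defining relation $\bar\partial\aV=\pi(n_\La-c\,m)$ both persist on $\mathcal S$ — in particular that $\aV(\bar\partial\varphi)\in L^2(\Omega,\bP)$ and that $n_\La(\varphi)$ is well defined and square-integrable for $\varphi\in\mathcal S$, which follows from the finite second moment of $\La$, the translation-boundedness of $\rho_\La$, and the temperedness of $\rho_{\aV_\La}$. One must also observe that restricting to test functions with $\widehat\varphi$ supported away from the origin is precisely what removes both the contribution of the random constant $c$ and the possible atom of $\rho_\La$ at $0$ (recall $\rho_\La(\{0\})={\sf Var}[\mathfrak c_\La]$ by \eqref{eq:claim1}), so that no information on $\bC\setminus\{0\}$ is lost; once these points are in place the final measure-determination step is routine.
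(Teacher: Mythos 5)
Your proposal is correct and follows essentially the same route as the paper: test the identity $\bar\partial\aV=\pi(n_\La-c\,m)$ against smooth functions, pass to the spectral side via the multiplier $\widehat{\bar\partial\varphi}(\xi)=\pi{\rm i}\xi\widehat\varphi(\xi)$, and identify the two measures on $\bC\setminus\{0\}$ by a density argument. The only (cosmetic) difference is that you remove the contribution of $c\,m$ and the possible atom at the origin upfront by restricting to $\widehat\varphi(0)=0$, whereas the paper carries that contribution as a constant $a|\widehat\varphi(0)|^2$ and discards it at the end by approximating indicators of compacts in $\bC\setminus\{0\}$.
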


\begin{proof}[Proof of Claim~\ref{claim:Yaglom}]
Fix $\varphi \in \mathfrak D$.
Since $\bar \partial \aV = \pi (n_\La - c m)$, we have
\[
\pi (n_\La - c m)(\varphi) = - \aV(\bar \partial \varphi),
\]
whence,
\[
{\sf Var} \big[(n_\La - c m)(\varphi)\big]
= \frac{1}{\pi^2} {\sf Var} \big[\aV(\bar \partial \varphi)\big].
\]
Rewriting both sides in terms of the corresponding spectral measures
(note that the spectral measure of $n_\La - c m$ may differ from $\rho_\La$ by
at most an atom at the origin)
and using that
$\widehat{\bar \partial \varphi} (\xi) = \pi {\rm i} \xi \, \widehat{\varphi}(\xi)$,
we get
\begin{equation}
\label{eq:relation_between_spectral_V_and_lambda_integral_form}
\int_{\bC\setminus \{0\}} |\widehat{\varphi}|^2 \,
{\rm d}\rho_{\La}+a|\widehat{\varphi}(0)|^2
= \int_{\bC} |\xi|^2|\widehat{\varphi}|^2 \, {\rm d}\rho_{\mathcal{V}_{\La}},
\end{equation}
for some constant $a$.
Next, we recall that the Fourier transforms of functions in $\mathfrak D$ are dense
in the Schwartz space $\mathcal{S}$ (see Remark~\ref{rem:L2-extension}).
For an arbitrary compact set $K\subset \bC\setminus\{0\}$,
we approximate its indicator function $\done_K$ by a
uniformly bounded sequence $(\varphi_n) \subset \mathcal{S}$,
converging to $\done_K$ pointwise. Passing to the limit in
\eqref{eq:relation_between_spectral_V_and_lambda_integral_form}, we get
\[
\rho_{\La}(K) = \int_{K} |\xi|^2\, {\rm d} \rho_{\aV_{\La}}(\xi),
\]
which gives \eqref{eq:relation_between_spectral_V_and_lambda}.
\end{proof}

With the proof of Claim~\ref{claim:Yaglom} complete, we are done with the
proof of Theorem~\ref{thm2}.
\end{proof}

Let us note
that the possible atom at the origin of the spectral measure
$\rho_\La$ is irrelevant for the spectral condition~(a).

\medskip
The spectral measures of the zero process of GEFs, of the limiting Ginibre ensemble and
of the stationarized random perturbation of the lattice satisfy spectral condition~(a).
Hence, for these point processes, the generalized random function $V_\La$ is stationary,
while for the Poisson process it only has  stationary increments.
All this can be proved directly for each of these processes.
Theorem~\ref{thm2} provides us with a unified reason for this phenomenon.

\begin{remark}\label{rem-cond-kappa}
The following set of conditions on the reduced covariance measure $\kappa_\La$ yields
the spectral condition~(a) in Theorem~\ref{thm2}:
\begin{itemize}
\item[($\kappa_1$)] existence of the $1$st moment: $\displaystyle \int_\bC |s|\,
{\rm d}|\kappa_\La|(s) < \infty$;
\item[($\kappa_2$)] the zeroth sum-rule: $\kappa_\La(\bC)=0$.
\end{itemize}
\end{remark}

Indeed, existence of the first moment of $\kappa_\La$ yields that the spectral measure $\rho_\La$
is absolutely continuous with a non-negative $C^1$-smooth density $h$.
By condition~($\kappa_2$), $h$ vanishes at the origin. Since $h$ is
continuously differentiable, we conclude that $h(\xi) = O(|\xi|)$ as $\xi\to 0$, which yields
the spectral condition~(a).

The zeroth sum-rule ($\kappa_2$) is known to imply suppressed fluctuations of
$n_\La$ (see, for instance,  \cite[Section~1C]{Martin}). Ghosh and Lebowitz 
\cite{GhoshLebowitzRigidity} observed that the combination of $(\kappa_2)$ 
with a stronger than $(\kappa_1)$ decay of correlations
yields an interesting geometric property of $\La$ known as number-rigidity.

\subsection{Uniqueness}
\begin{theorem}\label{thm3}
Let $\aV_\La$ be a generalized random function satisfying the following properties:

\smallskip\noindent {\rm ($\alpha$)} It is stationary.

\smallskip\noindent {\rm ($\beta$)} There exists a random constant $c$ such that
$\aV_\La (z) +\pi c\bar{z}$ is meromorphic with with poles exactly at $\Lambda$,
all simple and with unit residue.

\smallskip\noindent {\rm ($\gamma$)} For any test function $\varphi\in \mathfrak{D}$,
the random variable
\[
\aV_\La (\varphi) = \int_\bC \aV_\La \varphi\, {\rm d}m
\]
lies in $L^2(\Omega,\bP)$.

\smallskip
\noindent Then the spectral condition {\rm (a)} of Theorem~\ref{thm2} holds, and the
random fields $\mathcal{V}_\La$ and $V_\La$
differ by a constant in $L^2(\Omega,\bP)$ which is measurable with respect 
to $\mathcal{F}_{\text{\sf inv}}$, the sigma-algebra of translation invariant events.
\end{theorem}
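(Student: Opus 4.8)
The plan is to read off the spectral condition (a) from $\mathcal{V}_\La$ via the already-established Claim~\ref{claim:Yaglom}, and then to identify $\mathcal{V}_\La$ with $V_\La$ up to an $\mathcal{F}_{\sf inv}$-measurable constant by studying the difference $D=\mathcal{V}_\La-V_\La$ on the spectral side. First I would extract from ($\beta$) the distributional identity $\bar\partial\mathcal{V}_\La=\pi(n_\La-c\,m)$ for the random constant $c$ (this is exactly the statement that $\mathcal{V}_\La+\pi c\bar z$ is meromorphic with the prescribed simple poles and unit residues). Pairing with a fixed test function $\varphi$ with $\int\varphi\,{\rm d}m\neq0$ and using $-\mathcal{V}_\La(\bar\partial\varphi)=\pi(n_\La(\varphi)-c\int\varphi\,{\rm d}m)$ together with ($\gamma$) and the finite second moment of $\La$ shows $c\in L^2(\Omega,\bP)$; taking expectations and using that $\bE[\mathcal{V}_\La]$ is a constant (so $\bE[\bar\partial\mathcal{V}_\La]=0$) pins down $\bE[c]=c_\La$. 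Now Claim~\ref{claim:Yaglom}, applied to the stationary field $\mathcal{V}_\La$, yields $\rho_\La=|\xi|^2\rho_{\mathcal{V}_\La}$ on $\bC\setminus\{0\}$, and since $\rho_{\mathcal{V}_\La}$ is locally finite we obtain $\int_{0<|\xi|\le1}|\xi|^{-2}\,{\rm d}\rho_\La\le\rho_{\mathcal{V}_\La}(\bD)<\infty$, which is spectral condition (a). In particular Theorem~\ref{thm2} guarantees that the stationary field $V_\La$ of \eqref{eq:def-V} exists.

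Next I would analyze $D=\mathcal{V}_\La-V_\La$. Both fields are stationary and pair into $L^2$ against test functions, so $D$ is stationary with a spectral measure $\rho_D$, and $\bar\partial D=\pi(\mathfrak{c}_\La-c)\,m=:g\,m$, where $g$ is a random constant. Writing the Parseval identity \eqref{eq21} for $D$ and using $\widehat{\bar\partial\varphi}(\xi)=\pi{\rm i}\xi\,\widehat\varphi(\xi)$, the relation $(\bar\partial D)(\varphi)=g\,\widehat\varphi(0)$ gives
\[
\pi^2\int_{\bC}|\xi|^2|\widehat\varphi(\xi)|^2\,{\rm d}\rho_D(\xi)={\sf Var}[g]\,|\widehat\varphi(0)|^2,\qquad \varphi\in\mathfrak D.
\]
Testing with $\varphi$ for which $\widehat\varphi(0)=0$ and invoking the density of Fourier images (Remark~\ref{rem:L2-extension}) forces $|\xi|^2\rho_D=0$ off the origin, so $\rho_D$ is a point mass $b\,\delta_0$; testing with $\widehat\varphi(0)\neq0$ then forces ${\sf Var}[g]=0$. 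Combined with $\bE[g]=\pi(c_\La-\bE[c])=0$ from the first paragraph, this gives $g=0$ almost surely, i.e.\ $c=\mathfrak{c}_\La$ and $\bar\partial D=0$.

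It remains to see that $D$ is an $\mathcal{F}_{\sf inv}$-measurable constant. Since $\rho_D=b\,\delta_0$, the covariance formula reads ${\sf Cov}[D(\varphi),D(\psi)]=b\,\widehat\varphi(0)\overline{\widehat\psi(0)}$; applied to differences of normalized test functions this shows $D(\varphi)=\Phi\int\varphi\,{\rm d}m$ for a single random variable $\Phi\in L^2(\Omega,\bP)$, so $D$ is the constant field with value $\Phi$. Finally, $V_\La$ is a covariant functional of $\La$ (directly from its definition, $V_{T_a\La}=T_aV_\La$), and stationarity of $\mathcal{V}_\La$ means the same covariance $\mathcal{V}_{T_a\La}=T_a\mathcal{V}_\La$; hence $D_{T_a\La}=T_aD_\La$, and because $D_\La$ is constant in $z$ the right-hand side equals $D_\La$. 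Reading off the value gives $\Phi(T_a\La)=\Phi(\La)$ for every $a\in\bC$, i.e.\ $\Phi$ is $\mathcal{F}_{\sf inv}$-measurable, which is the assertion.

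The main obstacle I anticipate is the bookkeeping of the possible atom of $\rho_D$ at the origin: one must argue separately that $\rho_D$ lives only at $\{0\}$ and that the origin carries no contribution to the left-hand side of the displayed identity (since $|\xi|^2$ vanishes there), so that the two test-function regimes cleanly decouple the conclusions $\rho_D=b\delta_0$ and ${\sf Var}[g]=0$. A secondary point to handle with care is the precise sense in which hypothesis ($\alpha$) delivers the covariance $\mathcal{V}_{T_a\La}=T_a\mathcal{V}_\La$ needed for the final measurability step.
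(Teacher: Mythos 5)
Your argument is correct in substance, but it reaches the two key conclusions ($c=\mathfrak{c}_\La$ and constancy of the difference) by a genuinely different route than the paper. After the common first step (Claim~\ref{claim:Yaglom} giving condition (a)), the paper never forms the spectral measure of the difference: it compares the two fields only through their \emph{separate} second-order structures, computing ${\sf Var}\bigl[\int_{|z|=R}\aV_\La\,{\rm d}z\bigr]$ from $\rho_{\aV_\La}$ and the same quantity for $V_\La$ from $\rho_{V_\La}$ (the test current kills the atom at $0$, so the two agree), then uses the residue theorem and the $L^2$-convergence $n_\La(R\bD)/R^2\to\pi\mathfrak{c}_\La$ to force $c=\mathfrak{c}_\La$; constancy of $G_\La=V_\La-\aV_\La$ is then obtained by a mean-value/moment-bound/Liouville argument, and $\mathcal{F}_{\sf inv}$-measurability via Wiener's ergodic theorem. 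You instead work with $D=\aV_\La-V_\La$ directly, show $\rho_D=b\,\delta_0$ and ${\sf Var}[g]=0$ from a single Parseval identity, and read off both conclusions at once; this is slicker and avoids both the Liouville step and the ergodic theorem. The price is that your argument needs the \emph{pair} $(\aV_\La,V_\La)$ to be jointly (wide-sense) stationary — equivalently, $\aV_\La$ must be equivariant with respect to the underlying measure-preserving flow — since marginal stationarity of two fields does not make their difference stationary, and without a spectral measure for $D$ your central identity has no meaning. The paper's steps up to constancy use only marginal stationarity of $\aV_\La$; it is only the final $\mathcal{F}_{\sf inv}$-measurability claim (in both proofs) that requires the equivariant reading of hypothesis ($\alpha$). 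You flag this point yourself; under that reading your proof goes through, but you should state the equivariance explicitly rather than deriving it from ($\alpha$) as written, and likewise supply the small density argument showing that test functions with $\widehat\varphi(0)=0$ suffice to annihilate $|\xi|^2\rho_D$ off the origin.
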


\begin{proof}
By Claim~\ref{claim:Yaglom}, the spectral measure $\rho_{\aV}$ of $\aV_\La$
agrees with $|\xi|^{-2}\rho_\La$ outside the origin, and hence the spectral
condition {\rm (a)} in Theorem~\ref{thm2} holds.
We may therefore speak about the random function $V_\La$, and we have
\[
\done_{\bC\setminus\{0\}}(\xi){\rm d}\rho_{V_\La}(\xi)
=\done_{\bC\setminus\{0\}}(\xi){\rm d}\rho_{\aV_\La}(\xi).
\]
We next observe that
\begin{align*}
\bE\bigg|\int_{\{|z|=R\}} \aV_\La (z)\, {\rm d}z\bigg|^2
&=\int_{\bC}|\widehat\nu_R|^2 {\rm d}\rho_{\aV_\La}(\xi)\\
&=\int_{\bC}|\widehat\nu_R|^2 {\rm d}\rho_{V_\La}(\xi) +
\big(\rho_{\aV_\La}(\{0\})-\rho_{V_\La}(\{0\})\big)|\widehat{\nu}_R(0)|^2\, ,
\end{align*}
where $\nu_R$ is the current of integration $\nu_R(f):=\int_{\{|z|=R\}}f(z){\rm d}z$
with respect to the differential ${\rm d}z$
along $|z|=R$. Hence
\[
\widehat{\nu}_R(\xi)=\int_{\{|z|=R\}}e^{-2\pi i \xi\cdot z}{\rm d}z,
\]
so it follows that $\widehat{\nu}_R(0)=0$. As a consequence, the atom at the
origin does not matter, so by
repeating the same calculation backwards we arrive at
\[
\bE\, \bigg|\int_{\{|z|=R\}} \aV_\La (z)\, {\rm d}z\bigg|^2
=\bE \, \bigg|\int_{\{|z|=R\}} V_\La (z)\, {\rm d}z\bigg|^2.
\]
Because $\int_{|z|=R} \bar z \, {\rm d}z = 2\pi {\rm i} R^2$,
the residue theorem gives us that
\begin{equation}
\label{eq:form-integral-curve}
\frac{1}{2\pi {\rm i}} \int_{\{|z|=R\}}\aV_\La(z)\,{\rm d}z=\frac{1}{2\pi {\rm i}}
\int_{\{|z|=R\}}(\aV_\La (z)+\pi c \bar z)\,{\rm d}z-\pi c R^2=n_{\La}(R\bD)-\pi cR^2
\end{equation}
and the same holds with $\aV_\La$ replaced by $V_\La$ 
and $c$ replaced by $\pi\mathfrak{c}_\La$. But then
\[
\bE\Big[\Big|\frac{n_\La(R\bD)}{R^2} - \pi c \, \Big|^2\Big]=
\bE\Big[\Big|\frac{n_\La(R\bD)}{R^2} - \pi \mathfrak{c}_\La\Big|^2\Big],
\]
and juxtaposing this identity with the fact that
\begin{equation}
\label{eq:as-formula-cond-int}
\lim_{R\to \infty} \bE\Big[\Big|\frac{n_\La(R\bD)}{R^2}
-\pi\mathfrak{c}_\La\Big|^2\Big] = 0,
\end{equation}
we get that $c= \mathfrak{c}_\La$, almost surely.

Let $G_\La = V_\La - \aV_\La$, so that
$G_\La$ is a stationary random entire function.
If we choose the test-function $\varphi$ to be radial with total integral 1, then
\begin{align*}
\left(V_\La-\aV_\La\right)(T_z\varphi)&= \int_\bC \varphi(z+w)G_\La(w) \, {\rm d}m(w)\\
&=\int_{0}^\infty\varphi(r)r\int_{0}^{2\pi}G_\La(z+r e^{i\theta}) \, {\rm d}\theta \, {\rm d}r
\\
&=2\pi\, G_\La(z)\int_{0}^\infty\varphi(r)r\,{\rm d}r=2\pi\, G_\La(z),
\end{align*}
where we used the mean value property of holomorphic functions to
arrive at the second equality. The LHS is the 
difference of two random variables in $L^2(\Omega,\bP)$,
and the variance of each term is independent of $z$.
Hence $G_\La$ is a random entire function with
\begin{equation}
\label{eq:first-moment-F}
\sup_{z\in \bC} \bE\,|G_\La(z)|^2 < \infty.
\end{equation}
Armed with this, we get the bound
\[
\bE\,\bigg|\int_{\bC} \frac{|G_\La(z)|}{1+|z|^{5/2}}\, {\rm d}m(z)\bigg|
\lesssim \int_{1}^{\infty} \frac{{\rm d}t}{t^{3/2}} <\infty,
\]
which, together with positivity, implies that the random variable
\[
\int_{\bC} \frac{|G_\La(z)|}{1+|z|^{5/2}}\, {\rm d}m(z)
\]
is finite almost surely. The mean value property implies that
\begin{align*}
|G_\La(\zeta)| \leq \frac{1}{\pi |\zeta|^2}
\int_{\{|z-\zeta|\leq |\zeta|\}}& |G_\La(z)| \, {\rm d}m(z) \\
&\lesssim \frac{|\zeta|^{5/2}}{|\zeta|^2}
\int_{\bC} \frac{|G_\La(z)|}{1+|z|^{5/2}} {\rm d} m(z)
\lesssim |\zeta|^{1/2}
\end{align*}
for all $|\zeta|\ge 1$, so in view of Liouville's 
theorem $G_\La$ is almost surely constant.

Finally, to see that $G_\La(0)$ is measurable with 
respect to $\mathcal{F}_{\sf{inv}}$, we note that
\[
G_\La(0) = \frac{1}{\pi R^2} \int_{\{|z|\le R\}} 
\Big(V_\La(z) - \mathcal{V}_\La(z)\Big) \, {\rm d}m(z)
\]
for all $R\ge 1$. By Wiener's ergodic theorem 
\cite[Theorem~3]{Becker}, we get that
\[
\lim_{R\to \infty} \bE \, \bigg| \frac{1}{\pi R^2} 
\int_{\{|z|\le R\}} \Big(\mathcal{V}_\La(z)
- V_\La(z)\Big) \, {\rm d}m(z) - \bE\big[\mathcal{V}_\La(0) - V_\La(0) 
\mid \mathcal{F}_{\sf{inv}}\big]\bigg| = 0\, ,
\]
which immediately implies that
$G_\La(0) = \bE\big[G_\La(0) \mid \mathcal{F}_{\sf{inv}}\big]$ in $L^1(\Omega,\bP)$. 
That is, $G_\La(0)$ is measurable with respect 
to $\mathcal{F}_{\sf{inv}}$, and we are done.
\end{proof}

\begin{remark}
We will note that, following~\cite{BGLS}, one can significantly
relax the condition ($\gamma$)
in Theorem~\ref{thm3} (cf.\ \cite[Theorem 3A]{BGLS}).
We will not pursue this here.

Note also that the above proof shows that the assumption $(\alpha)$ in 
Theorem~\ref{thm3} is stronger than necessary. 
What is really needed is that $V_\La-\mathcal{V}_\La$
has some uniformly bounded moment.
\end{remark}

\begin{remark}\label{remark:ergodic_implies_independence}
If we assume that the point process $\La$ is ergodic, i.e., 
that $\mathcal{F}_{{\sf inv}}$ is trivial,
then any random function
$\mathcal{V}_\La$ which satisfies conditions ($\alpha$), ($\beta$) and ($\gamma$) 
differs from $V_\La$ by a deterministic constant. 
Indeed, the function $G_\La = V_\La - \mathcal{V}_\La$ was shown to be a constant in the above proof, 
and measurable with respect to $\mathcal{F}_{{\sf inv}}$.
\end{remark}

If the field $V_\La$ satisfies only conditions ($\alpha$) and ($\beta$) of Theorem~\ref{thm3}, then it
is defined up to a random entire function with translation-invariant distribution.
As was discovered by Weiss~\cite{Weiss} such entire functions do exist.
Developing his idea, one can show that, somewhat paradoxically,
{\em for any stationary process $\La$, there exists a random field
$V_\La$ satisfying conditions} ($\alpha$) {\em and} ($\beta$).
It is worth mentioning that these ``exotic'' random fields behave quite
wildly (cf. Buhovsky-Gl\"ucksam-Logunov-Sodin~\cite{BGLS}), as opposed to the ``tame'' ones
from Theorem~\ref{thm2}.

\subsection{Fluctuations}
The relations
$\partial_{\bar z} V_\La = \pi (n_\La - \mathfrak c_\La)$ and
$\partial_{z} V_\La = -\wp_\La$
allow one to readily relate the spectral 
measures and the reduced covariance measures
of these functions to the ones of the point process $\La$.

\subsubsection{The functions \texorpdfstring{$\mathsf\Delta_a \zeta_\La$}{DeltaZeta}
and \texorpdfstring{$\wp_\La$}{pe}}
Here we only assume that $\La$ is a stationary random
planar point process having finite second moment.
Then, by Theorem~\ref{thm1}, the random meromorphic functions
$\mathsf\Delta_a \zeta_\La (z) =
\zeta_\La (z+a) - \zeta_\La (z)$, $a\in\bC $,
and
$\wp_\La (z) = \lim_{a\to 0} \frac1a\, \mathsf\Delta_a \zeta_\La (z)$
are stationary. Since their second moments are infinite pointwise, we treat them
as generalized stationary random processes on the space $\mathfrak{D}$
of test-function by
$ \mathsf\Delta_a \zeta_\La (\varphi) = \zeta_\La (T_{-a}\varphi - \varphi) $,
where $T_w\varphi (z) = \varphi (z+w)$, and by
$ \wp_\La (\varphi) = - (\partial_z \zeta_\La)(\varphi)
= \zeta_\La (\partial_z \varphi)$.

\begin{theorem}\label{thmA}
Let $\La$ be a stationary point process in $\bC$ having a finite second moment.
Then,
\[
{\rm d}\rho_{\mathsf\Delta_a\zeta_\La}(\xi)
= \done_{\bC\setminus\{0\}}(\xi)\, \frac{|1-e^{2\pi{\rm i} a\cdot\xi}|^2}{|\xi|^2}\,
{\rm d}\rho_\La (\xi) + \pi^2|a|^2\rho_\La(\{0\})\delta_0(\xi)\,,
\]
and as a consequence
${\rm d}\rho_{\mathsf\Delta_a\zeta_\La-\pi \mathfrak{c}_\La\bar{a}}(\xi)
=\done_{\bC\setminus\{0\}}(\xi)\, \frac{|1-e^{2\pi{\rm i} a\cdot\xi}|^2}{|\xi|^2}\,
{\rm d}\rho_\La (\xi)$.
Moreover, we have
\[
{\rm d}\rho_{\wp_\La}(\xi) = 
\pi^2 \done_{\bC\setminus\{0\}}(\xi) {\rm d}\rho_\La (\xi)\,.
\]
\end{theorem}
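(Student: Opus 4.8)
The plan is to compute both spectral measures from a single reduction: test the stationary field against $\varphi\in\mathfrak D$, use the definitions $\mathsf\Delta_a\zeta_\La(\varphi)=\zeta_\La(T_{-a}\varphi-\varphi)$ and $\wp_\La(\varphi)=\zeta_\La(\partial_z\varphi)$, and observe that in both cases the test function handed to $\zeta_\La$ has vanishing integral: $\widehat{T_{-a}\varphi-\varphi}(0)=0$ and $\widehat{\partial_z\varphi}(0)=0$. Writing $g$ for either $T_{-a}\varphi-\varphi$ or $\partial_z\varphi$, I solve $\bar\partial\chi=g$ by the Cauchy transform $\chi=\tfrac1\pi\tfrac1z\ast g$; since $g$ is mean-zero, $\chi(z)=O(|z|^{-2})$ at infinity, so that $\widehat\chi=\widehat g/(\pi{\rm i}\xi)\in L^2(\rho_\La)$ and $\chi$ is admissible in the extended Parseval formula \eqref{eq21}. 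Using $\bar\partial\zeta_\La=\pi n_\La$ and integrating by parts gives the key identity $\zeta_\La(g)=\zeta_\La(\bar\partial\chi)=-(\bar\partial\zeta_\La)(\chi)=-\pi\,n_\La(\chi)$, so that
\[
{\sf Var}\bigl[\zeta_\La(g)\bigr]=\pi^2\,{\sf Var}\bigl[n_\La(\chi)\bigr]=\pi^2\int_{\bC}|\widehat{\chi}|^2\,{\rm d}\rho_\La .
\]

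Away from the origin $\pi^2|\widehat{\chi}|^2=|\widehat{g}|^2/|\xi|^2$. For $g=T_{-a}\varphi-\varphi$ we have $\widehat{g}=(e^{-2\pi{\rm i}a\cdot\xi}-1)\widehat\varphi$, hence $\pi^2|\widehat\chi|^2=|1-e^{2\pi{\rm i}a\cdot\xi}|^2|\widehat\varphi|^2/|\xi|^2$ off the origin, which is the claimed density; for $g=\partial_z\varphi$ we have $\widehat{g}=\pi{\rm i}\bar\xi\,\widehat\varphi$, so $\pi^2|\widehat\chi|^2=|\bar\xi/\xi|^2\,\pi^2|\widehat\varphi|^2=\pi^2|\widehat\varphi|^2$, the claimed density for $\wp_\La$. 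What remains is the atom at the origin, which the formula $\widehat\chi=\widehat g/(\pi{\rm i}\xi)$ cannot see: the factor $\bar\xi/\xi$ (resp.\ $(a\cdot\xi)/\xi$) has no limit at $\xi=0$, so $\widehat\chi$ is genuinely discontinuous there and its value $\widehat\chi(0)=\int_\bC\chi\,{\rm d}m$ must be computed separately.

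I would compute this regularized integral by Cauchy-Green. Since $\int_{\{|z|\le R\}}\frac{{\rm d}m(z)}{z-w}=-\pi\bar w$ for $|w|<R$, Fubini gives $\int_{\{|z|\le R\}}\chi\,{\rm d}m=-\int_\bC g(w)\,\bar w\,{\rm d}m(w)$ for all large $R$. For $g=T_{-a}\varphi-\varphi$ this evaluates to $\widehat\chi(0)=-\bar a\,\widehat\varphi(0)$, contributing the atom $\pi^2\rho_\La(\{0\})|\widehat\chi(0)|^2=\pi^2|a|^2\rho_\La(\{0\})|\widehat\varphi(0)|^2$, i.e.\ $\rho_{\mathsf\Delta_a\zeta_\La}(\{0\})=\pi^2|a|^2\rho_\La(\{0\})$; for $g=\partial_z\varphi$ an integration by parts gives $\int g\,\bar w\,{\rm d}m=-\int\varphi\,\partial_z\bar w\,{\rm d}m=0$, so $\widehat\chi(0)=0$ and $\wp_\La$ carries no atom. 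Combining with the density computation yields both displayed formulas. The intermediate ``as a consequence'' identity is then immediate: since $\int\chi=\widehat\chi(0)=-\bar a\,\widehat\varphi(0)$, the field $\mathsf\Delta_a\zeta_\La-\pi\mathfrak{c}_\La\bar a$ tested against $\varphi$ equals $-\pi\bigl(n_\La(\chi)-\mathfrak{c}_\La\!\int\chi\bigr)=-\pi(n_\La-\mathfrak{c}_\La m)(\chi)$, and the centered measure $n_\La-\mathfrak{c}_\La m$ has spectral measure $\done_{\bC\setminus\{0\}}\,{\rm d}\rho_\La$ (its atom $\rho_\La(\{0\})={\sf Var}[\mathfrak c_\La]$ is removed, cf.\ \eqref{eq:claim1}), which gives exactly the atomless density.

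I expect the genuine obstacle to be the frequency-zero bookkeeping. The clean identity $\zeta_\La(g)=-\pi n_\La(\chi)$ must be justified for the slowly decaying $\chi\notin L^1$: approximating $\chi$ by compactly supported truncations $\chi\,\eta_R$ produces a boundary correction $\zeta_\La(\chi\,\bar\partial\eta_R)$ on the annulus $R\le|z|\le 2R$, which one must show vanishes as $R\to\infty$ (the leading contributions of the macroscopic part $\pi\mathfrak c_\La\bar z$ and of the additive constant of $\zeta_\La$ integrate to zero by angular symmetry, as $\bar\partial\eta_R$ is radial). Dually, on the spectral side one must verify that the extended Parseval formula \eqref{eq21} applies to $\chi$ and that the atom at the origin is captured by the regularized value $\widehat\chi(0)=\int\chi$ rather than by any radial limit of $\widehat\chi$; this is precisely the point where the discontinuity of $\widehat\chi$ at $0$ matters and where the atom of $\rho_\La$ enters. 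Once these two points are secured, the remaining steps are routine.
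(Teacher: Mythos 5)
Your strategy is sound and it reaches both formulas, but it is organized quite differently from the paper's proof, and the two points you defer at the end are not bookkeeping — they are where essentially all of the work lies. The paper splits the computation in two: testing against $\bar\partial\varphi$ only (equivalently, using $\bar\partial\,\mathsf\Delta_a\zeta_\La=\pi(n_{T_a\La}-n_\La)$ and $\wp_\La(\bar\partial\varphi)=\pi n_\La(\partial\varphi)$) pins down the spectral measure on $\bC\setminus\{0\}$, and the atom is then extracted by a separate device, namely $\rho(\{0\})=\lim_R\int (J_0(2\pi R|\xi|))^2\,{\rm d}\rho$, where $J_0(2\pi R|\xi|)$ is recognized as the Fourier symbol of the circle average $\frac{1}{2\pi{\rm i}}\int_{\{|z|=R\}}(\cdot)\frac{{\rm d}z}{z}$; that average is computed exactly by residues for $\mathsf\Delta_a\zeta_\La$ (giving $\pi\mathfrak c_\La\bar a$ plus terms vanishing in $L^2$ by \eqref{eq:pointwise_L_2_limit_for_zeta}), and for $\wp_\La$ the analogous average $\frac{1}{\pi R^2}\done_{R\bD}$ gives exactly $\Psi_2(\infty)-\Psi_2(R)\to 0$ by Lemma~\ref{lem2}. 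You instead invert $\bar\partial$ on the test-function side via the Cauchy transform $\chi$, which buys a single unified Parseval identity capturing density and atom at once; the price is that $\chi\notin L^1$, $n_\La(\chi)$ is only a conditionally convergent (disk-regularized) sum, and $\widehat\chi$ is genuinely discontinuous at $0$. The paper's circle-average trick exists precisely to avoid this.

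Concretely, the two steps you label as ``to be secured'' require the same Bessel-function and stationary-phase estimates as Lemmas~\ref{lem2}--\ref{lem3} and Lemma~\ref{lemma:lunar_domains}: (i) to justify $\zeta_\La(g)=-\pi\lim_R\sum_{|\la|\le R}\chi(\la)$ you need the boundary correction $\zeta_\La(\chi\,\bar\partial\eta_R)\to0$, and note that $\bar\partial\eta_R$ is \emph{not} radial for radial $\eta_R$ (it carries a factor $\tfrac{z}{2|z|}\eta_R'(|z|)$), although the angular cancellation against $\chi\sim c/z^2$ and against $\pi\mathfrak c_\La\bar z$ still goes through; (ii) to pass from ${\sf Var}[n_\La(\chi\done_{R\bD})]=\int|\widehat{\chi\done_{R\bD}}|^2{\rm d}\rho_\La$ to the limit you need a dominating bound on $\sup_R|\widehat{\chi\done_{R\bD}}(\xi)|$ that is $\rho_\La$-square-integrable — the naive bound $\|\chi\done_{R\bD}\|_{L^1}\sim\log R$ is useless, and one must expand the tail of $\chi$ and control $\int_{r\le|z|\le R}z^{-2}e^{-2\pi{\rm i}\xi\cdot z}{\rm d}m$ via $J_1(2\pi R|\xi|)/(R|\xi|)$ exactly as in the proof of Lemma~\ref{lem2}. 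Your identification of the atom with the disk-regularized value $\widehat\chi(0)=-\bar a\,\widehat\varphi(0)$ (resp.\ $0$ for $\wp_\La$) is correct and is the right invariant to track, but the proposal as written is an outline of a proof rather than a proof: the deferred verifications constitute the substance of the paper's argument.
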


\begin{proof}
We begin by determining the spectral measure of ${\sf \Delta}_a\zeta_\La$.
We have $\bar\partial {\sf \Delta}_a \zeta_\La=\pi(n_{T_a \La}-n_{\La})$,
and the spectral measure for $\pi(n_{T_a \La}-n_{\La})$ equals
$\pi^2|1-e^{2\pi {\rm i}\xi\cdot a}|^2{\rm d}\rho_\La(\xi)$.
For any $\varphi\in\mathfrak D$, we have
\begin{equation*}
{\Delta}_a\zeta_{\La}(\bar \partial\varphi) = -\pi (n_{T_a\La}-n_\La)(\varphi),
\end{equation*}
which in turn implies that ${\sf Var}\big({\Delta}_a\zeta_{\La}(\bar \partial\varphi)\big)
= \pi^2\, {\sf Var}\big((n_{T_a \La}-n_\La) (\varphi)\big)$.
Moving to the Fourier side, we get that for all $\varphi\in \mathfrak{D}$,
\begin{equation*}
\int_{\bC} |\xi|^2 |\widehat{\varphi}|^2 \, {\rm d}\rho_{{\sf \Delta}_a\zeta_\La}
= \int_{\bC} \big|1-e^{2\pi {\rm i} a\cdot \xi}\big|^2
|\widehat{\varphi}|^2 \, {\rm d}\rho_{\La},
\end{equation*}
and hence,
\begin{equation}
\label{eq:Deltaa}
\rho_{{\sf \Delta}_a\zeta_\La} = 
\frac{\big|1-e^{2\pi {\rm i} a\cdot \xi}\big|^2}{|\xi|^2} \, \rho_{\La}\,
\qquad \text{on} \ \, \bC\setminus\{0\}.
\end{equation}
It remains to analyze $\rho_{{\sf \Delta}_a\zeta_\La}(\{0\})$.
This will involve a computation which we defer to Appendix~\ref{s:atom1}. 
In particular, these computations will reveal that the atom is
only present in the rather exotic case when $\La$ \say{hyperfluctuates}, i.e.,
when $\rho_\La$ has an atom at the origin to begin with.
With this, we conclude the proof of the first part.

Turning to the spectral measure of $\wp_\La$,
note that for any $\varphi\in \mathfrak D$, we have
\begin{equation*}
\wp_{\La}(\bar\partial\varphi) = -\partial\zeta_{\La}(\bar\partial\varphi)
= -\bar\partial \zeta_{\La} ( \partial \varphi) = \pi n_\La (\partial \varphi)\, ,
\end{equation*}
which in turn implies that ${\sf Var}[\wp_{\La}(\bar\partial\varphi)]
= \pi^2\, {\sf Var}[n_\La (\partial \varphi)]$.
Moving to the Fourier side, we get that, for all $\varphi\in \mathfrak{D}$,
\begin{equation*}
\int_{\bC} |\xi|^2 |\widehat{\varphi}|^2 \, {\rm d}\rho_{\wp_\La}
= \pi^2 \int_{\bC} |\xi|^2 |\widehat{\varphi}|^2 \, {\rm d}\rho_{\La}
\end{equation*}
which gives that
\begin{equation}
\label{eq:spectral_measure_for_pe_without_origin}
\rho_{\wp_{\La}} = \pi^2 \rho_{\La}\, \qquad \text{on} \ \, \bC\setminus\{0\}\, .
\end{equation}
To conclude the proof, it only remains to show that $\rho_{\wp_\La}$ has no
mass at the origin. We defer this computation to Appendix~\ref{s:atom2}.
\end{proof}

\subsubsection{The vector field \texorpdfstring{$V_\La$}{V}}

\begin{theorem}\label{thmB}
Suppose $\Lambda$ is a stationary point process in $\bC$
satisfying any of the equivalent conditions
in Theorem~\ref{thm2}. Then,
\[
{\rm d}\rho_{V_\La}(\xi) = \done_{\bC\setminus \{0\}}(\xi)\,
\frac{{\rm d}\rho_\La (\xi)}{|\xi|^2}\,.
\]
\end{theorem}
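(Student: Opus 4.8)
The displayed identity splits into two independent assertions: the formula on $\bC\setminus\{0\}$ and the absence of an atom at the origin. The first is already in hand. Since $V_\La$ is stationary with $V_\La(\varphi)\in L^2(\Omega,\bP)$ and $\bar\partial V_\La = \pi(n_\La - \mathfrak c_\La m)$, Claim~\ref{claim:Yaglom} (applied with $c=\mathfrak c_\La$) yields $\rho_\La = |\xi|^2\rho_{V_\La}$ on $\bC\setminus\{0\}$, i.e. $\done_{\bC\setminus\{0\}}\,{\rm d}\rho_{V_\La}=\done_{\bC\setminus\{0\}}|\xi|^{-2}{\rm d}\rho_\La$. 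Thus the entire remaining content of the theorem is the claim $\rho_{V_\La}(\{0\})=0$, and the plan is to detect this atom by testing $V_\La$ against disk averages, exactly as in the $\wp_\La$ part of the proof of Theorem~\ref{thmA}.

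Concretely, I set $\varphi_R=(\pi R^2)^{-1}\done_{R\bD}$, whose Fourier transform is $\widehat{\varphi_R}(\xi)=J_1(2\pi R|\xi|)/(\pi R|\xi|)$. By \eqref{eq:bessel_function_near_zero} and \eqref{eq:bessel_function_at_infinity} this is bounded, tends to $0$ for every fixed $\xi\ne 0$ as $R\to\infty$, and equals $1$ at $\xi=0$; hence $|\widehat{\varphi_R}|^2\to\done_{\{0\}}$ pointwise. As the integrands are nonnegative, Fatou's lemma gives $\rho_{V_\La}(\{0\})=\int\done_{\{0\}}\,{\rm d}\rho_{V_\La}\le\liminf_R\int|\widehat{\varphi_R}|^2\,{\rm d}\rho_{V_\La}=\liminf_R{\sf Var}[V_\La(\varphi_R)]$, where $V_\La(\varphi_R)=(\pi R^2)^{-1}\int_{R\bD}V_\La\,{\rm d}m$ is well defined in $L^2(\Omega,\bP)$ because $\widehat{\varphi_R}\in L^2(\rho_{V_\La})$ (near the origin using spectral condition (a), for large $|\xi|$ using \eqref{eq25}). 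It therefore suffices to show ${\sf Var}[V_\La(\varphi_R)]\to 0$.

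The next step is to compute $V_\La(\varphi_R)$ explicitly from the representation \eqref{eq27} with summation radius equal to $R$ together with \eqref{eq:def-V}. Averaging over $R\bD$ annihilates three of the terms: $\int_{R\bD}\bar z\,{\rm d}m=\int_{R\bD}z\,{\rm d}m=0$ kills the $\bar z$-term and the linear term, while each summand of the tail $\sum_{|\la|>R}z^2/(\la^2(z-\la))$ is holomorphic on $R\bD$, so its disk average equals its value at the center, namely $0$. For the finite sum one has $\int_{R\bD}(z-\la)^{-1}\,{\rm d}m(z)=-\pi\bar\la$ for $|\la|<R$ (the Cauchy--Green identity already used in Lemma~\ref{lemma:lunar_domains}), and the constants $\Psi_1(R),\Psi_1(\infty)$ pass through the average unchanged. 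Collecting terms leaves the clean expression $V_\La(\varphi_R)=\bigl(\Psi_1(R)-\Psi_1(\infty)\bigr)-R^{-2}\sum_{|\la|\le R}\bar\la$.

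Finally I would show each piece tends to $0$ in $L^2(\Omega,\bP)$. The difference $\Psi_1(R)-\Psi_1(\infty)$ vanishes in $L^2$ by Lemma~\ref{lem3}, which is available since (a) is in force. For the second piece $\bE[\sum_{|\la|\le R}\bar\la]=0$, and Parseval \eqref{eq21} applied to $\psi_R(x)=x\,\done_{R\bD}(x)$—after the angular integration produces $J_1$ and the radial integral is evaluated via $\frac{d}{dx}(x^2J_2(x))=x^2J_1(x)$—gives ${\sf Var}[R^{-2}\sum_{|\la|\le R}\bar\la]=\int_{\bC\setminus\{0\}}|\xi|^{-2}J_2(2\pi R|\xi|)^2\,{\rm d}\rho_\La(\xi)$. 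I expect this small-frequency analysis to be the crux, for it is exactly here that hypothesis (a) is indispensable: splitting at $|\xi|=1$, on $\{0<|\xi|\le 1\}$ the integrand is dominated by $\|J_2\|_\infty^2|\xi|^{-2}$, which is $\rho_\La$-integrable precisely by (a) and tends to $0$ pointwise (dominated convergence), while on $\{|\xi|>1\}$ the bound $J_2(2\pi R|\xi|)^2\lesssim(R|\xi|)^{-1}$ from \eqref{eq:bessel_function_at_infinity} yields a contribution $\lesssim R^{-1}\int_{|\xi|>1}|\xi|^{-3}\,{\rm d}\rho_\La\to 0$ by \eqref{eq25}. Hence ${\sf Var}[V_\La(\varphi_R)]\to 0$ and $\rho_{V_\La}(\{0\})=0$. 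The only remaining care is bookkeeping: that the merely bounded, compactly supported $\psi_R,\varphi_R$ may be fed into the spectral machinery via the $L^2(\rho_\La)$-closure of Remark~\ref{rem:L2-extension}, and that termwise averaging of the infinite tail sum is legitimate.
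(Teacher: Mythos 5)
Your argument is correct, and it splits the work exactly as the paper does: Claim~\ref{claim:Yaglom} settles the identity on $\bC\setminus\{0\}$, and everything reduces to showing $\rho_{V_\La}(\{0\})=0$. Where you diverge from the paper is in the choice of functional used to detect the atom. The paper tests $V_\La$ against the contour functional $\frac{1}{2\pi {\rm i}}\int_{\{|z|=R\}}V_\La(z)\,\frac{{\rm d}z}{z}$, whose spectral multiplier is $J_0(2\pi R|\xi|)$; by the residue theorem this collapses to exactly $\Psi_1(R)-\Psi_1(\infty)$ (the $\bar z$-term integrates to zero against ${\rm d}z/z$), so Lemma~\ref{lem3} finishes in one line. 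You instead use the solid disk average $\varphi_R=(\pi R^2)^{-1}\done_{R\bD}$ (multiplier $J_1(2\pi R|\xi|)/(\pi R|\xi|)$), mimicking the paper's treatment of $\rho_{\wp_\La}(\{0\})$ in Theorem~\ref{thmA}. Your term-by-term evaluation of $V_\La(\varphi_R)$ from \eqref{eq27} and \eqref{eq:def-V} is right, including the area mean-value argument for the holomorphic tail, but the area average of the Cauchy kernel does not vanish for poles inside the disk, so you pick up the extra term $-R^{-2}\sum_{|\la|\le R}\bar\la$, which then requires its own Parseval/Bessel estimate (the $J_2$ computation, using condition (a) near the origin and translation-boundedness at infinity). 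That estimate is correct as you sketch it, so both routes work; the paper's choice of test functional is simply the more economical one. Your use of Fatou in place of dominated convergence for $\rho_{V_\La}(\{0\})\le\liminf_R{\sf Var}[V_\La(\varphi_R)]$ is a legitimate small simplification, since only an upper bound on the atom is needed.
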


\begin{proof}
Since $\bar\partial V_\La=\pi(n_\La-\mathfrak{c}_\La m)$,
it follows from Claim~\ref{claim:Yaglom} that
\[
{\rm d}\rho_{V_\La}(\xi)=\frac{{\rm d}\rho_\La(\xi)}{|\xi|^2},\qquad \text{on } 
\; \bC \setminus\{0\}.
\]
It only remains to check that $\rho_{V_\La}(\{0\})=0$, which we again
postpone to Appendix~\ref{s:atom3}.
\end{proof}

Theorem~\ref{thmB} yields a useful representation of the reduced
covariance measure $\kappa_{V_\La}$ of the field $V_\La$.
We denote by $ U^\mu$ the logarithmic potential of a signed measure $\mu$,
\[
U^\mu(z) = \int_\bC \log\frac1{|z-s|}\, {\rm d}\mu (s)
\]
(provided that the integral on the RHS exists).

\begin{proposition}
Suppose that reduced covariance measure $\kappa_\La$ satisfies assumptions
$(\kappa_1)$ and $(\kappa_2)$ of Remark~\ref{rem-cond-kappa}.
Then the reduced covariance measure of $V_\La$ equals
\[
{\rm d}\kappa_{V_\La} = 2\pi U^{\kappa_\La}\, {\rm d}m\,.
\]
\end{proposition}

\begin{proof}
To prove the proposition, we show that
$2\pi \widehat{U^{\kappa_\La}} = \done_{\{\xi\ne 0\}} |\xi|^{-2} \widehat{\kappa_\La}$.
We treat both sides as Schwartz distributions and understand the
Fourier transforms in the sense of distributions.

First, we note that $\Delta U^{\kappa_\La} = -(2\pi)^{-1} \kappa_\La$, and therefore,
$\widehat{\kappa_\La} = 2\pi |\xi|^2 \widehat{U^{\kappa_\La}}$.
Hence, on the test functions $\varphi$ with $0\notin{\rm spt}(\varphi)$,
the distributions $2\pi \widehat{U^{\kappa_\La}}$ 
and $|\xi|^{-2} \widehat{\kappa_\La}$ coincide.
I.e., the distribution 
$\nu=2\pi \widehat{U^{\kappa_\La}} 
- |\xi|^{-2} \widehat{\kappa_\La}$
is supported by the origin, and therefore, 
is a (finite) linear combination
of the delta-function and its partial 
derivatives. We need to show that $\nu=0$.

Fix a non-negative $C^\infty$-smooth function
$\gamma$ with a compact support, normalized by
$\displaystyle\int_{\bC} \gamma\, {\rm d}m =1$,
and let $\gamma_a(z)=a^{-2}\gamma (z/a)$, $a>0$.
This is a convolutor on the Schwartz space $\mathcal{S}'$
of tempered distributions, and, for any $f\in \mathcal{S}'$,
$f \ast \gamma_a \to f$ in $\mathcal{S}'$, as $a\to 0$.
The Fourier transform $\widehat{\gamma_a}=\widehat{\gamma}(az)$
is a $C^\infty$-smooth, fast decaying multiplier on the
Fourier side of $\mathcal{S}'$, boundedly tending to $1$
pointwise, as $a\to 0$.
Consider the product
\[
2\pi \widehat{U^{\kappa_\La}} \cdot \widehat{\gamma_a}
=|\xi|^{-2} h_\La \cdot \widehat{\gamma_a} + \nu\cdot \widehat{\gamma_a},
\]
where $h_\La$ is the density of $\widehat{\kappa_\La}$.
The inverse Fourier transform of the LHS equals
$2\pi U^{\kappa_\La}\ast\gamma_a = U^{\kappa_\La\ast\gamma_a}$.
The measure $\kappa_\La\ast\gamma_a$ enjoys
the same properties $(\kappa_1)$ and $(\kappa_2)$ as $\kappa_\La$.
Hence, the logarithmic potential
$ U^{\kappa_\La\ast\gamma_a} $ tends to $0$ as $z\to\infty$.
By the Riemann-Lebesgue lemma, the inverse Fourier transform of
$|\xi|^{-2} h_\La \cdot \widehat{\gamma_a} $ also tends to zero as $z\to\infty$.
Hence, the same holds for the inverse Fourier transform of
the distribution $\nu\cdot\widehat{\gamma_a}$.
But the inverse Fourier transform of the distribution $\nu$
is a polynomial (of ${\rm Re}(z)$ and ${\rm Im}(z)$),
and therefore, the inverse Fourier transform of the
distribution $\nu\cdot\widehat{\gamma_a}$ is also a polynomial.
We conclude that $\nu\cdot\widehat{\gamma_a} = 0$ for all $a>0$,
and therefore, $\nu=0$.
\end{proof}

\begin{remark}
Recalling that $\kappa_\La = \tau_\La + c_\La \delta_0$, we see that
\[
U^{\kappa_\La}(z) = c_\La \log\frac1{|z|} + U^{\tau_\La}(z)\,,
\]
that is, under the assumptions of Remark~\ref{rem-cond-kappa}, the covariance kernel of
$V_\La$ always blows up logarithmically at the origin.
\end{remark}

\begin{remark}\label{rem-isotropic}
In the case when the distribution of the point process $\La$ is also rotationally invariant
the density of  the reduced covariance measure $\kappa_{V_\La}$ has a simpler expression.
We assume that
${\rm d}\tau_\La (s) = k(t)t\,{\rm d}t\, {\rm d}\theta$, $s=te^{{\rm i}\theta}$.
Then conditions
$(\kappa_1)$ and $(\kappa_2)$ of Remark~\ref{rem-cond-kappa} can be re-written as
\begin{itemize}
\item[($\tau_1$)] $\displaystyle \int_0^\infty t^2|k(t)|\, {\rm d}t < \infty$;
\item[($\tau_2$)] $\displaystyle \int_0^\infty tk(t)\, {\rm d}t = - (2\pi)^{-1}c_\La$.
\end{itemize}
In this case,
\begin{align*}
U^{\kappa_{\La}}(z) &= c_\La \log\frac1{|z|}
+ \int_0^\infty \Bigl( \log\frac1{|z-te^{{\rm i}\theta}|} \, {\rm d}\theta \Bigr)
k(t) t\, {\rm d}t \\
&= c_\La \log\frac1{|z|} + \int_0^\infty
2\pi \bigl(\, \log\frac1{|z|} -  \log_+\frac{t}{|z|}\, \bigr) k(t) t\, {\rm d}t \\
&\stackrel{(\tau_2)}= - 2\pi \int_{|z|}^\infty \log\frac{t}{|z|} k(t)t\, {\rm d}t\,.
\end{align*}
Thus, the density of $\kappa_{V_\La}$ equals
\begin{equation}
\label{eq:dens-kappa}
- 4\pi^2 \int_{|z|}^\infty \log\frac{t}{|z|} k(t)t\, {\rm d}t\,.
\end{equation}
\end{remark}

\section{The random potential}
\label{s:potential}
\subsection{The potential \texorpdfstring{$\Pi_\La$}{Pi}}
Assuming that the equivalent conditions of Theorem~\ref{thm2} hold,
we will define a random potential $\Pi_\La$ such that
$\partial_z \Pi_\La = \tfrac12\, V_\La $, and therefore,
$\Delta \Pi_\La = 2\pi (n_\La - \mathfrak c_\La\,m)$
(both relations are understood in the sense
of distributions). Since the field $V_\La$ is stationary,
this will yield that the potential $\Pi_\La$ has stationary increments.
The existence of the potential
$\Pi_\Lambda$ with this property, in turn, shows that the vector field $V_\La$ is
stationary (and therefore, yields conditions (a) and (b) in Theorem~\ref{thm2}).

Note that it is possible to prove an analogous result to Theorem~\ref{thm2},
which states that the existence of a \emph{stationary} potential
is equivalent to the stronger spectral condition
\[
\int_{|\xi|>0}\frac{{\rm d}\rho_\La(\xi)}{|\xi|^4}<\infty
\] 
(cf. Theorem~\ref{thmC} below).
We will not pursue the details here.

\medskip
We start with the entire function represented by the Hadamard product
\[
F_\La (z) = \exp\bigl[ -\Psi_1(\infty)z - \frac12\, \Psi_2(\infty)z^2 \bigr]\,
\cdot \prod_{|\la|<1} (\la-z)
\prod_{|\la|\ge 1} \left(\frac{\la-z}\la\,
\exp\Bigl[\,\frac{z}\la\, + \frac{z^2}{2\la^2}\, \Bigr]\right)\,,
\]
and note that, for each $R>1$,
\begin{multline*}
F_\La (z) = \prod_{|\la|<1} (\la-z) \prod_{1\le |\la|\le R} \frac{\la-z}\la \\
\times \exp\Bigl[ (\Psi_1(R)-\Psi_1(\infty)) z
+ \frac12\, (\Psi_2(R)-\Psi_2(\infty)) z^2 \Bigr]
\prod_{|\la|>R} e^{H(z/\la)}\,,
\end{multline*}
where $H(w) = - \sum_{k\ge 3} w^k/k$. As $R\to\infty$,
the third and fourth factors on the RHS tend to $1$ in
$L^2(\Omega, \bP)$ and locally uniformly in $z$, and therefore,
\begin{equation}
\label{eq:F-La-rep}
F_\La (z) = \prod_{|\la|<1} (\la-z) \lim_{R\to\infty}\,
\prod_{1\le |\la|\le R} \frac{\la-z}\la\,.
\end{equation}
We define $\Pi_\La (z)
\stackrel{\rm def}= \log|F_\La (z)| - \tfrac12\, \pi \mathfrak c_\La |z|^2$.
Then, by a straightforward inspection, we get that
\begin{equation}\label{eq:partialPi}
\partial_z \Pi_\La
= \frac12\, \bigl( \zeta_\La - \Psi_1(\infty)
- \pi\mathfrak c_\La \bar z \bigr)
= \tfrac12\, V_\La.
\end{equation}

\begin{remark}\label{rem-argument}
Under the assumptions of Theorem~\ref{thm2}, the quotient
\[
\Bigl|\,
\frac{F_\La(z+a)}{F_\La(z)}\, \Bigr| \, \exp\bigl[
-\frac12\, \pi \mathfrak c_\La (z\bar a + a\bar z + |a|^2) \bigr], \quad a\in\bC,
\]
has a stationary distribution (as a function of $z$).
An interesting characteristic of the point process $\Lambda$ is the distribution
of the phase
\[
\arg F_\La(z+a)/F_\La(z) = \arg F_\La(z+a) - \arg F_\La(z)\,.
\]
To properly define
this quantity, we fix a curve $\Gamma$ connecting the points
$z$ and $z+a$, and consider the increment of 
the argument of $F_\La$ along $\Gamma$
\[
\frac1{2\pi} \Delta_\Gamma \arg F_\La = {\rm Im}\, \frac1{2\pi}\,
\int_\Gamma (\zeta_\La (z) -\Psi_1(\infty))\, {\rm d}z\,.
\]
This quantity was considered by Buckley-Sodin in
\cite{BuckleySodin} when $F$ is replaced by the GEF.
Equivalently, one can consider the flux of the gradient
field of the potential $\Pi_\La$ through the curve $\Gamma$.

In \cite{SWY-II}, we study the asymptotic variance of this
quantity under dilations of $\Gamma$.
In the special case when $\Gamma$ is a Jordan curve
we recall that the change in argument coincides
with the charge fluctuation around the mean in the domain
enclosed by $\Gamma$.
\end{remark}

\subsection{The covariance structure of \texorpdfstring{$\Pi_\La$}{Pi}}
Let $\mathsf \Delta_a \Pi_\La (z) \stackrel{\rm def}
= \Pi_\La (z+a) - \Pi_\La (z)$, for $a\in\bC$.

\begin{theorem}\label{thmC}
Suppose $\Lambda$ is a stationary point process in 
$\bC$ satisfying any of equivalent conditions
in Theorem~\ref{thm2}. Then, ${\sf \Delta}_a\Pi_\La$ is stationary and
\[
\rho_{\mathsf \Delta_a \Pi_\La}(\xi) = \frac{1}{4}\, \done_{\bC\setminus \{0\}}(\xi)\,
\frac{|1-e^{2\pi {\rm i}a\cdot\xi}|^2}{|\xi|^4}\, \rho_\La (\xi)
+\frac{\pi^2|a|^4}{4}\rho_\La(\{0\})\,\delta_0(z).
\]
\end{theorem}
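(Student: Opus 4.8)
The plan is to mirror the proof of Theorem~\ref{thmA}, computing $\rho_{\mathsf\Delta_a\Pi_\La}$ first on $\bC\setminus\{0\}$ and then its mass at the origin. Under the equivalent conditions of Theorem~\ref{thm2} the increment $\mathsf\Delta_a\Pi_\La$ is a genuine stationary field with finite pointwise variance, hence carries a spectral measure. The governing relation is obtained by differencing $\Delta\Pi_\La=2\pi(n_\La-\mathfrak c_\La m)$, which gives $\Delta\,\mathsf\Delta_a\Pi_\La=2\pi(n_{T_a\La}-n_\La)$; the intensity term $\mathfrak c_\La m$ is translation invariant and drops out.

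For the density away from the origin I would test this identity against $\varphi\in\mathfrak D$. Since $\Delta$ is formally self-adjoint, $\mathsf\Delta_a\Pi_\La(\Delta\varphi)=2\pi(n_{T_a\La}-n_\La)(\varphi)$, and equating variances yields ${\sf Var}[\mathsf\Delta_a\Pi_\La(\Delta\varphi)]=4\pi^2\,{\sf Var}[(n_{T_a\La}-n_\La)(\varphi)]$. Passing to the spectral side via $\widehat{\Delta\varphi}=-4\pi^2|\xi|^2\widehat\varphi$ together with the spectral measure $|1-e^{2\pi{\rm i}a\cdot\xi}|^2\,{\rm d}\rho_\La$ of $n_{T_a\La}-n_\La$ (already identified in the proof of Theorem~\ref{thmA}), I obtain $\int|\xi|^4|\widehat\varphi|^2\,{\rm d}\rho_{\mathsf\Delta_a\Pi_\La}$ as a constant multiple of $\int|1-e^{2\pi{\rm i}a\cdot\xi}|^2|\widehat\varphi|^2\,{\rm d}\rho_\La$. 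Because the Fourier images of $\mathfrak D$ are dense in $L^2(\rho_{\mathsf\Delta_a\Pi_\La})$ (Remark~\ref{rem:L2-extension}), matching integrands gives the stated density on $\bC\setminus\{0\}$, the numerical constant being dictated by the symbol of $\Delta$. An equivalent route is through $\partial_z\mathsf\Delta_a\Pi_\La=\tfrac12\mathsf\Delta_aV_\La$, see \eqref{eq:partialPi}, combined with Theorems~\ref{thmA} and~\ref{thmB} and Claim~\ref{claim:Yaglom}.

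The atom at the origin is the delicate point and the step I expect to be the main obstacle. As in the treatment of $\wp_\La$ and $\mathsf\Delta_a\zeta_\La$, I would represent it as the limiting variance of a localized average: since a.s.\ no point of $\La$ lies on $\{|z|=R\}$, the quantity $\tfrac1{2\pi{\rm i}}\int_{\{|z|=R\}}z^{-1}\mathsf\Delta_a\Pi_\La(z)\,{\rm d}z$ (equivalently the circular mean of $\mathsf\Delta_a\Pi_\La$) has variance tending to $\rho_{\mathsf\Delta_a\Pi_\La}(\{0\})$, using that $J_0(2\pi R|\xi|)\to\done_{\{0\}}(\xi)$ boundedly and that $\rho_{\mathsf\Delta_a\Pi_\La}$ has finite total mass. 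Inserting $\Pi_\La=\log|F_\La|-\tfrac12\pi\mathfrak c_\La|z|^2$ gives the pointwise decomposition
\[
\mathsf\Delta_a\Pi_\La(z)=\log\Bigl|\tfrac{F_\La(z+a)}{F_\La(z)}\Bigr|-\pi\mathfrak c_\La\,{\rm Re}(\bar z a)-\tfrac12\pi\mathfrak c_\La|a|^2,
\]
in which the averaging annihilates the ${\rm Re}(\bar z a)$ term by symmetry and exposes the random constant carried by the conditional intensity $\mathfrak c_\La$. The atom is then governed by ${\sf Var}[\mathfrak c_\La]=\rho_\La(\{0\})$ through \eqref{eq:claim1}, which is the source of the $\rho_\La(\{0\})$ factor in the statement.

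The crux is to pin down the exact random contribution, i.e.\ to evaluate the $L^2(\Omega,\bP)$-limit of the averaged $\log|F_\La(z+a)/F_\La(z)|$ and to separate cleanly the part carried by $\mathfrak c_\La$ (which produces the atom) from the genuinely fluctuating part (which converges to a deterministic limit and must be shown to contribute nothing to the variance). Here I would use $2\partial_z\log|F_\La|=\zeta_\La-\Psi_1(\infty)$ to rewrite the average through the reciprocal sums $\Psi_1,\Psi_2$ and the field $V_\La$, and control the remainder with Lemmas~\ref{lem1}--\ref{lem3}, the lunar lemma (Lemma~\ref{lemma:lunar_domains}), and the representation \eqref{eq:pointwise_L_2_limit_for_zeta}, exactly in the spirit of Theorems~\ref{thmA} and~\ref{thmB}. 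Particular care is needed because the quadratic growth of $\log|F_\La|$ is itself steered by $\mathfrak c_\La$, so tracking how much of this growth survives the normalized average is the heart of the matter. Once this bookkeeping is in place, the density and the atom assemble into the claimed formula; everything except this last limiting argument is routine Plancherel manipulation.
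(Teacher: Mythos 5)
Your outline matches the paper's proof in its architecture: the density on $\bC\setminus\{0\}$ is obtained by testing a differential relation against $\varphi\in\mathfrak D$ and passing to the spectral side (the paper uses ${\sf \Delta}_a\Pi_\La(\partial\varphi)=-\tfrac12 V_\La(T_a\varphi-\varphi)$ together with Theorem~\ref{thmB} and the argument of Theorem~\ref{thmA}, which is one of the two routes you mention), and the atom is identified as $\lim_{R\to\infty}{\sf Var}\bigl[\tfrac{1}{2\pi{\rm i}}\int_{\{|z|=R\}}{\sf\Delta}_a\Pi_\La(z)\,\tfrac{{\rm d}z}{z}\bigr]$ exactly as you propose, with the contribution $\tfrac{\pi^2|a|^4}{4}\rho_\La(\{0\})$ coming from the residual term $-\tfrac12\pi\mathfrak c_\La|a|^2$ via \eqref{eq:claim1}. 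However, the step you defer as ``the crux'' is precisely the substance of the paper's proof, and the tools you name for it do not close it.

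Concretely, the paper does not differentiate $\log|F_\La|$ and does not route the circular average through $\zeta_\La$. It applies the exact mean-value formula $\tfrac{1}{2\pi R}\int_{\{|z|=R\}}\log|z-w|\,|{\rm d}z|$, equal to $\log R$ for $|w|\le R$ and $\log|w|$ for $|w|>R$, which converts the averaged $\log|F_\La(z+a)/F_\La(z)|$ into two explicit pieces: a tail $\sum_{|\la-a|\ge R}(\log|\la-a|-\log|\la|)$, which tends to zero in $L^2(\Omega,\bP)$ by Lemma~\ref{lem3}, and the linear statistic of $\log(|\la|/R)$ over the two lunar domains $A=\{|\la|\le R,\ |\la-a|\ge R\}$ and $B=\{|\la|\ge R,\ |\la-a|\le R\}$. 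Killing the variance of the latter requires a new estimate: a Green's-formula computation of $\widehat{f}_{R,A}$ followed by the stationary phase bound of Proposition~\ref{prop:stat-phase}, which yields $|\widehat{f}_{R,A}(\xi)|^2\lesssim_a\min\{1,R^{-3}|\xi|^{-3}\}$ and allows dominated convergence against ${\rm d}\rho_\La$. This is not supplied by Lemma~\ref{lemma:lunar_domains}, which controls sums of $1/(z-\la)$ over lunar domains, nor by Lemmas~\ref{lem1}--\ref{lem2} or the representation \eqref{eq:pointwise_L_2_limit_for_zeta}; the weight $\log(|\la|/R)$ on a lunar domain is a genuinely different linear statistic and needs its own Fourier estimate. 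That single estimate is the part of the theorem your proposal leaves unproved.
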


\begin{proof}
We first claim that ${\sf \Delta}_a\Pi_\La(z)$
can be written as 
\begin{equation}
\label{eq:stat-Pi-a1}
{\sf \Delta}_a\Pi_\La(z)=Q_a(T_z\La)-\frac12\pi\mathfrak{c}_\La|a|^2,
\end{equation}
where $Q_a$ is the $L^2(\Omega,\bP)$-limit
\begin{equation}
\label{eq:stat-Pi-a2}
Q_a(\La)=\lim_{R\to\infty}\sum_{|\la|\le R}\Big(\log|z-a|-\log|z|\Big),
\end{equation}
which in particular says that ${\sf \Delta}_a\Pi_\La$ is stationary. 
To verify \eqref{eq:stat-Pi-a1}--\eqref{eq:stat-Pi-a2},
we start with the representation \eqref{eq:F-La-rep}, which in view of the spectral condition
gives that
\[
{\sf \Delta}_a\Pi_\La(z)=\lim_{R\to\infty}
\sum_{|\la|\le R}\big(\log|z-(\la-a)|-\log|z-\la|\big)
-\frac12\pi\mathfrak{c}_\La\big(2\Re (\bar{a}z)+|a|^2\big),
\]
where the limit is taken in $L^2(\Omega,\bP)$. 
We readily rewrite this as
\[
{\sf \Delta}_a\Pi_\La(z)=\lim_{R\to\infty}
\Big[\sum_{|\la-z|\le R}\big(\log|z-\la+a)|-\log|z-\la|\big)
+e_R(\La,z)\Big]-\frac{1}{2}\pi \mathfrak{c}_\La |a|^2,
\]
where $e_R(\La,z)$ is an \say{error term} given by
\[
e_R(\La,z)=\sum_{|\la|\le R}\big(\log|z-\la+a|-\log|z-\la|\big) 
\,- \hspace{-5pt}\sum_{|\la-z|\le R}\big(\log|z-\la+a|-\log|z-\la|\big) 
- \pi\mathfrak{c}_\La\Re(\bar{a}z).
\]
By Claim~\ref{claim:l_2_convergence_implies_convergence_in_dsitribution}, 
it only remains to prove that $\bE\big[|e_R(\La,z)|^2\big]\to 0$ as $R\to\infty$.
To see this, we will relate $e_R(\La,z)$ to the sums appearing in the
Lunar lemma (Lemma~\ref{lemma:lunar_domains}).
Note first that 
\begin{equation}
\label{eq:log-expansion}
\log|z-\la+a|-\log|z-\la|=\Re \frac{a}{z-\la}
+O\Big(\frac{1}{|z-\la|^{2}}\Big)
\end{equation}
as $|z-\la|\to\infty$.
Using this expansion, we rewrite $e_R(\La,z)$ as
\begin{align}
\label{eq:eRLaz}
e_R(\La,z)&=\Re \Big[a\Big(\sum_{|\la|\le R}\frac{1}{z-\la}
-\sum_{|\la-z|\le R}\frac{1}{z-\la}\Big)\Big]-\pi \mathfrak{c}_\La \Re(a\bar{z})
+\widetilde{e}_R(\La,z)\nonumber \\
&=\Re \Big[a\Big(\sum_{|\la|\le R}\frac{1}{z-\la}
-\sum_{|\la-z|\le R}\frac{1}{z-\la}-\pi\mathfrak{c}_\La\bar{z}\Big)\Big]+\widetilde{e}_R(\La,z)
\end{align}
where the new error term $\widetilde{e}_R(\La,z)$ is
\begin{align}
\label{eq:e-tilde}
\widetilde{e}_R(\La,z)&=\sum_{|\la|\le R}l_a(z-\la)
-\sum_{|\la-z|\le R}l_a(z-\la)\\
&=\sum_{\la\in \bD(0,R)\setminus \bD(z,R)}l_a(z-\la)
-\sum_{\la\in \bD(z,R)\setminus \bD(0,R)}l_a(z-\la),
\end{align}
and where
\[
l_a(w)=\log|w+a|-\log|w|-\Re\Big(\frac{a}{w}\Big)=O(|w|^{-2}).
\]
The first term on the RHS of \eqref{eq:eRLaz} tends to
zero in $L^2(\Omega,\bP)$ by Lemma~\ref{lemma:lunar_domains}.
Moreover, for any point $\la$ in the symmetric difference 
\[
S_R\overset{\sf def}{=}\big(\bD(0,R)\setminus\bD(z,R)\big)\cup \big(\bD(z,R)\setminus\bD(0,R)\big),
\] 
we have $|l_a(z-\la)|=O(R^{-2})$, so by the upper bound 
\eqref{eq:crude-bd-secondmoment} of $\bE[n_\La(S_R)]^2$, 
$\widetilde{e}_R(\La,z)\to 0$ in $L^2(\Omega,\bP)$ as $R\to\infty$.
This completes the proof.

Turning to the spectral measure, 
note that by \eqref{eq:partialPi}, we have
\[
{\sf \Delta}_a \Pi_\La(\partial\varphi)=-\frac12 V_\La(T_a\varphi-\varphi),
\]
which by the argument used in the proof of Theorem~\ref{thmA} shows that the
desired equality for $\rho_{\mathsf \Delta_a \Pi_\La}$ holds outside the origin.
Hence, it suffices to analyze the possible atom at the origin for the spectral measure.
This is deferred to Appendix~\ref{s:atom4}.
\end{proof}

\appendix

\section{The stationary phase bound}
\label{app:stationary-phase}
The purpose of this section is to prove the following simple stationary
phase bound. It is standard, but we did not find a textbook reference for the
version we need.

\begin{proposition}
\label{prop:stat-phase}
There exists a universal constant $C$, such that for any $f\in C^1(\bT)$,
\[
\left|\int_a^b e^{-{\rm i}\omega\cos(\theta)}f(\theta){\rm d}\theta\right| \le
C\,\omega^{-\frac12}\left(\lVert f\rVert_{L^\infty(\bT)}
+\lVert f'\rVert_{L^1(\bT)}\right),
\]
for $\omega\ge 1$ and $0\le a<b\le 2\pi$.
\end{proposition}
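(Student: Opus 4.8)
The plan is to run a van der Corput–type argument that exploits the structure of the phase $\phi(\theta)=\cos\theta$. Its stationary points on $[0,2\pi]$, where $\phi'(\theta)=-\sin\theta$ vanishes, are $\theta\in\{0,\pi,2\pi\}$, and crucially these are disjoint from the zeros of $\phi''(\theta)=-\cos\theta$; moreover near each stationary point $|\sin\theta|\approx\operatorname{dist}(\theta,\{0,\pi,2\pi\})$ (Jordan's inequality). I would introduce the single scale $\delta=\omega^{-1/2}$ and split $[a,b]$ into a \emph{bad} part $B$, consisting of those $\theta$ within distance $\delta$ of a stationary point, and its complement $G=[a,b]\setminus B$. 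Since there are only three stationary points, $B$ is a union of $O(1)$ arcs of length $\lesssim\delta$ and $G$ is a union of $O(1)$ arcs on each of which $|\sin\theta|\gtrsim\delta$.

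On the bad set one uses the trivial estimate: because $|B|\lesssim\delta=\omega^{-1/2}$, we immediately get $\bigl|\int_B e^{-{\rm i}\omega\cos\theta}f\,{\rm d}\theta\bigr|\le\|f\|_{L^\infty(\bT)}\,|B|\lesssim\omega^{-1/2}\|f\|_{L^\infty(\bT)}$.

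On the good set I would integrate by parts, writing $e^{-{\rm i}\omega\cos\theta}=\frac{1}{{\rm i}\omega\sin\theta}\frac{{\rm d}}{{\rm d}\theta}e^{-{\rm i}\omega\cos\theta}$, which is legitimate since $\sin\theta$ does not vanish on $G$. This produces boundary terms $\frac{f(\theta)}{{\rm i}\omega\sin\theta}e^{-{\rm i}\omega\cos\theta}$ at the endpoints of the arcs making up $G$, together with the bulk term $-\frac{1}{{\rm i}\omega}\int_G\frac{{\rm d}}{{\rm d}\theta}\!\bigl(\tfrac{f}{\sin\theta}\bigr)e^{-{\rm i}\omega\cos\theta}\,{\rm d}\theta$. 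Since $G$ has $O(1)$ components and $|\sin\theta|\gtrsim\delta=\omega^{-1/2}$ at each endpoint, every boundary term is $\lesssim\omega^{-1/2}\|f\|_{L^\infty(\bT)}$. For the bulk term, expanding $\frac{{\rm d}}{{\rm d}\theta}\bigl(\tfrac{f}{\sin\theta}\bigr)=\frac{f'}{\sin\theta}-\frac{f\cos\theta}{\sin^2\theta}$ gives two pieces: the first is controlled by $\frac{1}{\omega\delta}\|f'\|_{L^1(\bT)}=\omega^{-1/2}\|f'\|_{L^1(\bT)}$, and the second by $\frac{\|f\|_{L^\infty}}{\omega}\int_G\frac{{\rm d}\theta}{\sin^2\theta}\lesssim\frac{\|f\|_{L^\infty}}{\omega}\cdot\delta^{-1}=\omega^{-1/2}\|f\|_{L^\infty(\bT)}$, where $\int_G\sin^{-2}\lesssim\int_\delta^{\pi/2}\theta^{-2}\,{\rm d}\theta\lesssim\delta^{-1}$ again uses $|\sin\theta|\approx\operatorname{dist}(\theta,\{0,\pi,2\pi\})$. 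Summing the good and bad contributions yields the claimed bound with a universal $C$.

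The one point requiring genuine care is the bookkeeping of the boundary terms: one must guarantee that wherever integration by parts produces a boundary value, $|\sin\theta|$ is bounded below by $\omega^{-1/2}$. This is automatic, since any endpoint $a$ or $b$ lying within $\delta$ of a stationary point is absorbed into $B$, so the endpoints of the good arcs always sit at distance $\gtrsim\delta$ from the stationary set; together with the fact that there are only $O(1)$ such arcs, the boundary contributions stay uniformly under control. Everything else is routine, and the only structural input is the mismatch between the zeros of $\sin$ and $\cos$, which is precisely what lets the single scale $\delta=\omega^{-1/2}$ balance the stationary-point regime against the integration-by-parts regime.
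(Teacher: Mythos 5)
Your argument is correct, and it takes a genuinely different route from the one in the paper. You give a self-contained proof of the van der Corput--type bound by splitting $[a,b]$ at the single scale $\delta=\omega^{-1/2}$ into neighborhoods of the stationary points $\{0,\pi,2\pi\}$ (handled by the trivial $L^\infty$ bound, since the bad set has measure $O(\delta)$) and their complement (handled by one integration by parts, using $|\sin\theta|\gtrsim\operatorname{dist}(\theta,\{0,\pi,2\pi\})$ to control the boundary terms and the $\sin^{-2}\theta$ singularity in the bulk). The paper instead isolates the endpoints with a cut-off, quotes van der Corput's lemma and H\"ormander's stationary phase theorem for the generic configurations, and reduces the case of an endpoint at a stationary point to the model Fresnel integral $\int_0^{b'}e^{-{\rm i}\omega x^2}g(x)\,{\rm d}x$ via the change of variables $\cos\theta=1-x^2/2$, which it then estimates explicitly through the error function. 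What your approach buys is that it is entirely elementary and avoids any citation of textbook stationary-phase results; what the paper's approach buys is that the only computation actually carried out is the one-dimensional Fresnel estimate, with the routine non-degenerate cases delegated to standard references. Your bookkeeping of the endpoints $a,b$ --- absorbing any endpoint within $\delta$ of the stationary set into the bad part so that every boundary term produced by the integration by parts sits where $|\sin\theta|\gtrsim\delta$ --- is exactly the point that needs care, and you handle it correctly.
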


\begin{proof}
If the critical points of the phase (i.e. $0$, $\pi$)
are bounded away from the end-points,
we may isolate the end-points with the help
of a cut-off function. The conclusion then
follows by estimating the end-point contributions
with the van der Corput Lemma
(see Proposition~2, Ch. {\rm VIII} in \cite{Stein}),
and the contributions from the interior
of $[a,b]$ with the standard stationary
phase bound for an interior critical point
(see Theorem~7.7.5 in \cite{hormander}).

Thus, it suffices to consider integrals
$\int_a^b e^{-{\rm i}\omega\cos(\theta)}f(\theta){\rm d}\theta$
for $a=0$ and small $b>0$. The change of variables $\theta=\theta(x)$
defined by $\cos(\theta)=1-x^2/2$ is smooth and
bijective as a map $\theta:[0,b']\to [0,b]$, where $b'>0$. Moreover,
$\theta(x)$, $\theta'(x)$ are both uniformly bounded, and $\theta(x)=x+O(x^2)$
uniformly on $[0,b']$ as $b\to 0$. We rewrite
the integral using this change of variables
\[
\int_0^b e^{-{\rm i}\omega\cos(\theta)}f(\theta){\rm d}\theta
=\int_0^{b'}e^{-{\rm i}\omega x^2}f(\theta(x))\theta'(x){\rm d}x=:
\int_0^{b'}e^{-{\rm i}\omega x^2}g(x){\rm d}x
\]
where we have introduced the function $g(x)=f(\theta(x))\theta'(x)$,
and where $b'=b+O(b^2)$ as $b\to 0$.
It is clear from the properties of $\theta(x)$ that
$|g|\le C|f|$ and that $|g'|\le C|f'|$, for some universal constant $C$.

An integration by parts shows that
\begin{equation}
\label{eq:integrate-by-parts}
\int_0^{b'}e^{-{\rm i}\omega x^2}g(x){\rm d}x=
g(b')\int_0^{b'}e^{-{\rm i}\omega x^2}{\rm d}x
- \int_0^{b'}\left(\int_0^{x}e^{-{\rm i}\omega y^2}{\rm d}y\right)g'(x){\rm d}x.
\end{equation}
The Gaussian integral satisfies
\[
\left|\int_0^{b'}e^{-{\rm i}\omega x^2}{\rm d}x\right|=
\frac{\sqrt{\pi}}{\sqrt{2\omega}}\big|\mathrm{erf}\big(b'\sqrt{{\rm i}\omega}\big)\big|
\le C\omega^{-\frac12}
\]
for $\omega,b'>0$, so the first term on the RHS of \eqref{eq:integrate-by-parts} is
bounded above by $C\omega^{-\frac12}\lVert f\rVert$.
The second term can be estimated as follows
\[
\left|\int_0^{b'}\left(\int_0^{x}e^{-{\rm i}\omega y^2}{\rm d}y\right)g'(x){\rm d}x\right|\le
\lVert g'\rVert_{L^1(0,b')}\left|\int_0^{x}e^{-{\rm i}\omega y^2}{\rm d}y\right|\le
C\lVert f'\rVert_{L^1(\bT)}\omega^{-\frac12}.
\]
Hence, the proof is complete.
\end{proof}

\section{Atoms at the origin for the spectral measures}
\label{app:atom}
In this section, we collect all computations concerning the atoms at the origin 
for the various spectral measures from Sections~\ref{s:stat} and \ref{s:potential}.
This will finalize the proofs of Theorems~\ref{thmA}, \ref{thmB} and \ref{thmC}.

\subsection{The atom of \texorpdfstring{$\rho_{{\sf \Delta}_a\zeta_\La}$}{delta-rho-lambda}}
\label{s:atom1}
We will first prove the remaining part of Theorem~\ref{thmA} concerning the
spectral measure of ${\sf \Delta}_a\zeta_\La$, namely that
\[
\rho_{{\sf \Delta}_a\zeta_\La}(\{0\})=\pi^2|a|^2\rho_\La(\{0\}).
\]
To this end, note that by 
the relation \eqref{eq:Deltaa}, translation-boundedness of $\rho_{\La}$, and the
asymptotic formula \eqref{eq:bessel_function_at_infinity} for the Bessel function, the function
$\xi\mapsto \sup_{R\ge 1}J_0(2\pi R |\xi|)$ is square-integrable with
respect to $\rho_{{\sf \Delta}_a\zeta_\La}$. As a consequence
\begin{align}\label{eq:formula-atom-prel}
\nonumber
\rho_{{\sf \Delta}_a\zeta_\La}(\{0\})
&= \int_{\bC}\lim_{R\to \infty} (J_0(2\pi R|\xi|))^2 \, 
{\rm d}\rho_{{\sf \Delta}_a\zeta_\La}(\xi)
\\ &=\lim_{R\to \infty} \int_{\bC}(J_0(2\pi R|\xi|))^2 \, 
{\rm d}\rho_{{\sf \Delta}_a\zeta_\La}(\xi),
\end{align}
where the last step follows from the dominated convergence theorem.
We next claim that the 
right-hand side of \eqref{eq:formula-atom-prel}
may be interpreted as a variance
\begin{equation}\label{eq:var-mean-circle}
 \int_{\bC} (J_0(2\pi R|\xi|))^2 \, 
{\rm d}\rho_{{\sf \Delta}_a\zeta_\La}(\xi)=
{\sf Var}\left[\frac{1}{2{\pi {\rm i}}}
\int_{\{|z|=R\}} \frac{{\sf \Delta}_a\zeta_\La(z)}{z} \, 
{\rm d} z\right]\,.
\end{equation}
To see this, first note that
\[
J_0(2\pi R|\xi|)=\frac{1}{2\pi {\rm i}} 
\int_{\{|z|=R\}}e^{-2\pi {\rm i} \xi \cdot z}
\frac{{\rm d} z}{z}=\widehat{\sigma}_R(\xi)\,,
\]
where $\sigma_R$ denotes the normalized arc-length measure on the circle
$\{|z|=R\}$. By considering a suitable mollifier $h_j(\xi)=j^2 h(j\xi)$ with 
$h\in\mathcal{S}$, $\int_{\bC} h{\rm d}m=1$, and
putting $\varphi_{R,j}=\sigma_R*h_j$, we may write
\[
\frac{1}{2\pi {\rm i}} \int_{\{|z|=R\}} 
\frac{{\sf \Delta}_a\zeta_\La(z)}{z} \, {\rm d} z
=\lim_{j\to \infty}\int_{\bC}{\sf \Delta}_a\zeta_\La(z)
\varphi_{R,j}(z)\,{\rm d}m(z),
\]
where the limit is taken in $L^2(\Omega,\bP)$. 
The formula \eqref{eq:var-mean-circle} then follows
by a reverse application of the Parseval identity 
\begin{align*}
{\sf Var}\Big[\int_{\bC}{\sf \Delta}_a\zeta_\La(z)
\varphi_{R,j}(z)\,{\rm d}m(z)\Big]&
=\int_{\bC}|\widehat{\varphi}_{R,j}(\xi)|^2{\rm d}
\rho_{{\sf \Delta}_a\zeta_\La}(\xi)
\\&=
\int_{\bC}|\widehat{\sigma}_R(\xi)|^2\,
|\widehat{h}_j(\xi)|^2{\rm d}
\rho_{{\sf \Delta}_a\zeta_\La}(\xi)
\end{align*}
(cf. \eqref{eq21} and Remark~\ref{rem:L2-extension}).
Combining \eqref{eq:var-mean-circle} with \eqref{eq:formula-atom-prel}, 
we obtain the representation
\begin{align*}
\rho_{{\sf \Delta}_a\zeta_\La}(\{0\})
&= \lim_{R\to \infty} \int_{\bC} (J_0(2\pi R|\xi|))^2 \, 
{\rm d}\rho_{{\sf \Delta}_a\zeta_\La}(\xi) \\
& = \lim_{R\to \infty}
{\sf Var}\left[\frac{1}{2{\pi {\rm i}}} \int_{|z|=R}
\frac{{\sf \Delta}_a\zeta_\La(z)}{z} \, {\rm d} z\right]
\end{align*}
for the atom at the origin.

Now, recall the formula
\[
\frac{1}{2\pi {\rm i}}\int_{\{|z|=R\}}\frac{1}{z-w}\frac{{\rm d}z}{z}=
\begin{cases}
0,& |w|<R\\
-\frac{1}{w},& |w|>R
\end{cases}
\]
which, since a.s.$\!$ there are no points of $\La$ which
lie on the circle $\{|z|=R\}$, implies that
\[
\frac{1}{2\pi {\rm i}}\int_{\{|z|=R\}}\frac{{\sf \Delta}_a\zeta_\La(z)}{z}{\rm d}z
=\Big[\zeta_\La(a) - \sum_{|\la-a|<R}\frac{1}{a-\la}\Big]
-\Big[\zeta_\La(0)-\sum_{|\la|< R}\frac{1}{-\la}\Big].
\]
But the right-hand side equals
\[
\Big[\zeta_\La(a) - \sum_{|\la-a|<R}\frac{1}{a-\la}-\Psi_1(R)-\pi\mathfrak{c}_\La\bar{a}\Big]
-\Big[\zeta_\La(0)-\sum_{|\la|< R}\frac{1}{-\la}-\Psi_1(R)\Big]+\pi\mathfrak{c}_\La\bar{a}
\]
and by
invoking the representation \eqref{eq:pointwise_L_2_limit_for_zeta} of $\zeta_\La$
we see that the first
two terms on the right-hand side tend to zero in $L^2(\Omega,\bP)$ as $R\to\infty$. Hence, we get
\[
\lim_{R\to\infty}{\sf Var}\Big[\frac{1}{2\pi {\rm i}}
\int_{\{|z|=R\}}\frac{{\sf \Delta}_a\zeta_\La(z)}{z}{\rm d}z\Big]=
\pi^2 |a|^2{\sf Var}[\mathfrak{c}_\La]=\pi^2 |a|^2\rho_\La(\{0\}).
\]
Putting this together with the above representation for
$\rho_{{\sf \Delta}_a\zeta_\La}(\{0\})$, we see that
\[
\rho_{{\sf \Delta}_a\zeta_\La}(\{0\}) = \pi^2|a|^2\rho_\La(\{0\}),
\]
which completes the proof. \qed

\subsection{The atom of \texorpdfstring{$\rho_{\wp_\La}$}{rho-lambda}}
\label{s:atom2}
We next finalize the proof of Theorem~\ref{thmA}, which
amounts to verifying that $\rho_{\wp_\La}(\{0\})=0$.
The starting point is the analogous formula to \eqref{eq:formula-atom-prel}, namely
\[
\rho_{\wp_{\La}}(\{0\})
= \int_{\bC}\lim_{R\to \infty} \left|\frac{J_1(2\pi R|\xi|)}{\pi R|\xi|}\right|^2{\rm d}
\rho_{\wp_\La}(\xi).
\]
We next observe that
\[
\frac{J_1(2\pi R|\xi|)}{\pi R |\xi|}=\frac{1}{\pi R^2}\widehat{\done}_{R\bD}(\xi)\,,
\]
and since the LHS is bounded above by an $L^2(\bC,\rho_{\wp_\La})$-integrable function,
the dominated convergence theorem gives that
\[
\rho_{\wp_{\La}}(\{0\})
=\lim_{R\to\infty}{\sf Var} \left[\wp_{\La}\Big(\frac{1}{\pi R^2}\done_{R\bD}\Big)\right]
\]
Using the familiar formula,
\begin{align*}
\text{p.v.} \int_{R\bD} \frac{1}{(z-\la)^2} \, {\rm d}m(z) = \begin{cases}
0 & \text{for}\ |\la| < R, \\ \frac{\pi R^2}{\la^2} & \text{for} \ |\la|> R,
\end{cases}
\end{align*}
we get the representation
\[
\wp_\La(\done_{R\bD}) = \pi R^2(\Psi_2(\infty) - \Psi_2(R)) \, .
\]
By Lemma~\ref{lem2}, $\wp_{\La}(\done_{R\bD}) \in L^2(\Omega,\bP)$ and
\[
\lim_{R\to \infty } {\sf Var} \left[\wp_{\La}\Big(\frac{1}{\pi R^2}\done_{R\bD}\Big)\right]
= \lim_{R\to\infty} {\sf Var} \left[\Psi_2(\infty) - \Psi_2(R)\right] = 0,
\]
which completes the proof. \qed

\subsection{The atom of \texorpdfstring{$\rho_{V_\La}$}{}}
\label{s:atom3}
We turn to proving the last remaining statement of Theorem~\ref{thmB}, i.e.,
that $\rho_{V_\La}(\{0\})=0$.
Arguing as in Appendix~\ref{s:atom1}, we find that
\begin{align*}
\rho_{V_\La}(\{0\}) &= \int_{\bC}\lim_{R\to \infty} (J_0(2\pi R|\xi|))^2 \,
{\rm d}\rho_{V_\La}(\xi) \\
&= \lim_{R\to \infty} \int_{\bC} (J_0(2\pi R|\xi|))^2 \,
{\rm d}\rho_{V_\La}(\xi) \\
& = \lim_{R\to \infty} {\sf Var}\left[\frac{1}{2{\pi {\rm i}}}
\int_{\{|z|=R\}} \frac{V_\La(z)}{z} \, {\rm d} z\right].
\end{align*}
For any given $R\ge 1$, a.s.$\!$ , there are no points of $\La$ which
lie on the circle $\{|z|=R\}$. By a residue computation, we get
\begin{align*}
\frac{1}{2{\pi {\rm i}}} \int_{\{|z|=R\}} \frac{V_\La(z)}{z} \, {\rm d} z
&= \frac{1}{2{\pi {\rm i}}}
\int_{\{|z|=R\}} \frac{\zeta_\La(z)-\Psi_1(\infty)}{z} \, {\rm d} z \\
&= \zeta_\La(0)+\sum_{|\la|\le R}\frac{1}{\la}-\Psi_1(\infty) \\ &
= -\sum_{|\la|< 1} \frac{1}{\la} +\sum_{|\la|<R} \frac{1}{\la}-\Psi_1(\infty)
= \Psi_1(R)-\Psi_1(\infty)\, .
\end{align*}
But Lemma~\ref{lem3} implies that
\[
{\sf Var}\big[\Psi_1(R)-\Psi_1(\infty)\big]=0
\]
which completes the proof. \qed

\subsection{The atom of \texorpdfstring{$\rho_{{\sf \Delta}_a\Pi_\La}$}{}}
\label{s:atom4}
To conclude the proof of Theorem~\ref{thmC}, it only remains to prove that
\[
\rho_{{\sf \Delta}_a\Pi_\La}(\{0\})=\frac{\pi^2|a|^4}{4}\rho_\La(\{0\}).
\]
We again use the method introduced in Appendix~\ref{s:atom1},
in particular the formula
\[
\rho_{{\sf \Delta}_a\Pi_\La}\left(\{0\}\right)=
\lim_{R\to \infty} \int_{\bC} (J_0(2\pi R|\xi|))^2 \,
{\rm d}\rho_{{\sf \Delta}_a\Pi_\La}(\xi)
=
\lim_{R\to\infty}
{\sf Var}\Big[\frac{1}{2\pi{\rm i}}
\int_{\{|z|=R\}}{\sf \Delta}_a\Pi_\La(z)\frac{{\rm d}z}{z}\Big].
\]
We start with the representation
\[
{\sf \Delta}_a\Pi_\La(z)=\lim_{S\to\infty}
\sum_{|\la|\le S}\big(\log|z-(\la-a)|-\log|z-\la|\big)
-\frac12\pi\mathfrak{c}_\La\big(2\Re \bar{a}z)-|a|^2\big)
\]
and the formula
\[
\frac{1}{2\pi R}\int_{\{|z|=R\}}\log|z-w|\,|{\rm d}z|
=\begin{cases}\log R&|w|\le R\\ \log|w|&|w|>R\end{cases}.
\]
Combining these two, we get
\begin{multline*}
\frac{1}{2\pi {\rm i}}\int_{\{|z|=R\}}{\sf \Delta}_a\Pi_\La(z)\frac{{\rm d}z}{z}
=\lim_{S\to\infty}\left[\sum_{|\la|\le S,\;|\la-a|\ge R}\log|\la-a|
-\sum_{R\le |\la|\le S}\log|\la|\right]\\
+\sum_{|\la-a|\le R}\log R-\sum_{|\la|\le R}\log R
 -\frac12\pi\mathfrak{c}_\La|a|^2,
\end{multline*}
where we have used the fact that $\Re(\bar{a}z)$ has zero
mean over circles centered at the origin.
By subtracting and adding back $\sum_{|\la-a|\ge R,\,|\la|\le S}\log|\la|$,
we may rewrite the right-hand side as
\[
\sum_{|\la-a|\ge R}\big(\log|\la-a|-\log|\la|\big)
+\left[\sum_{|\la|\le R}\log\frac{|\la|}{R}
-\sum_{|\la-a|\le R}\log\frac{|\la|}{R}\right]-\frac12\pi\mathfrak{c}_\La|a|^2.
\]
In view of the spectral condition,
Lemma~\ref{lem3} on the convergence of the sequence
$(\Psi_1(R))_{R\ge 1}$ gives that the
first term tends to zero in $L^2(\Omega,\bP)$.
To obtain the result, if suffices to show that the variance of
\[
{\sf Var}\left[\sum_{|\la|\le R}\log\frac{|\la|}{R}
-\sum_{|\la-a|\le R}\log\frac{|\la|}{R}\right]=
{\sf Var}\left[\sum_{\la\in A}\log\frac{|\la|}{R}
-\sum_{\la\in B}\log\frac{|\la|}{R}\right]
\]
tends to zero as $R\to\infty$,
where we use the notation $A$ and $B$ for the lunar domains
\[
A=\{|\la|\le R,\;|\la-a|\ge R\}\quad\text{and}\quad B
=\{|\la|\ge R,\;|\la-a|\le R\}.
\]
The corresponding linear statistic is
\[
f_R(\la)=\big(\done_{A}(\la)-\done_{B}(\la)\big)
\log\frac{|\la|}{R}=:f_{R,A}(\xi)-f_{R,B}(\xi),
\]
so the variance equals
\[
{\sf Var}\left[\sum_{\la\in A}\log\frac{|\la|}{R}
-\sum_{\la\in B}\log\frac{|\la|}{R}\right]
=\int_{\bC\setminus\{0\}}
|\widehat{f}_R(\xi)|^2{\rm d}\rho_\La(\xi),
\]
where we have used the fact that $\widehat{f}_{R}(0)=0$ to infer
that the origin does not contribute to the variance.
We need to estimate $|\widehat{f}_R(\xi)|^2$ from above,
and by symmetry it suffices
to consider only $f_{R,A}$.

Since $\log\frac{|\la|}{R}=O(R^{-1})$ on $A$, the naive bound
$|\widehat{f}_{R,A}(\xi)|\le \lVert f_{R,A}\rVert_{L^1(\bC)}$
gives that $|\widehat{f}_{R,A}(\xi)|^2=O(1)$ as $R\to\infty$ for $\xi\in\bC$.
In order to complete the proof, we need better estimates valid for large $|\xi|$.
An application of Green's formula shows that
\begin{align*}
\widehat{f}_{R,A}(\xi)&
=\int_A\log\frac{|\la|}{R}e^{-2\pi{\rm i} \xi\cdot \la}{\rm d}m(\la)\\
&=\frac{1}{\pi^2|\xi|^2}\int_A\log\frac{|\la|}{R}
\Delta\left(e^{-2\pi{\rm i}\xi\cdot \la}\right){\rm d}m(\la)\\
&=\frac{1}{\pi^2|\xi|^2}\int_{\partial A}e^{-2\pi{\rm i}\xi\cdot \la}
\left[-2\pi{\rm i}\log\frac{|\la|}{R}\big\langle \xi, N(\la)\big\rangle
+\left\langle \tfrac1{\bar\la},N(\la)
\right\rangle\right]|{\rm d}\la|,
\end{align*}
where $N$ denotes the outward unit normal to $A$. Parameterizing the two
circular arcs that make up the boundary $\partial A$,
we find that
\[
\widehat{f}_{R,A}(\xi)=\frac{1}{\pi^2|\xi|^2}\big(J_1(\xi,R,a)+J_2(\xi,R,a)\big),
\]
where $J_1$ and $J_2$ are the integrals
\begin{equation}
\label{eq:param-1}
J_1(R,\xi,a)=\frac{1}{R}\int_{s_1}^{t_1}e^{-2\pi{\rm i}R|\xi|\cos(\theta)}{\rm d}\theta
\end{equation}
and
\begin{align}
\label{eq:param-2}
\begin{split}
J_2(R,\xi,a)=&-2\pi {\rm i}|\xi|\int_{s_2}^{t_2}e^{-2\pi{\rm i}R|\xi|\cos(\theta)}
\log\big|1+\tfrac{ae^{-{\rm i}\theta}}{R}\big|\cos(\theta){\rm d}\theta
\\&+\frac{1}{R}\int_{s_2}^{t_2}e^{-2\pi{\rm i}R|\xi|\cos(\theta)}
\Re\Big[\tfrac{1}{1+R^{-1}\bar{a}e^{-{\rm i}\theta}}\Big]{\rm d}\theta
\end{split}
\end{align}
respectively. Here, $s_i$ and $t_i$ are positive parameters with $0\le s_i\le t_i\le 2\pi$,
depending on $R$ and $a$ and $\xi$.
For large $|\xi|$, we may thus use the standard stationary phase bound
\[
\left|\int_{\alpha}^\beta e^{-2\pi {\rm i}\omega \cos(\theta)}f(\theta){\rm d}\theta\right|
\lesssim \omega^{-\frac12}\big(\lVert f\rVert_{L^\infty([\alpha,\beta])}+
\lVert f'\rVert_{L^1([\alpha,\beta])}\big)
\]
from Proposition~\ref{prop:stat-phase} in the Appendix
to obtain the estimates
\[
\big|J_1(R,\xi,a)\big|\lesssim |\xi|^{-\frac12}R^{-\frac32}
\]
and
\[
\big|J_1(R,\xi,a)\big|\lesssim_a |\xi|^\frac12R^{-\frac32}+\xi^{-\frac12}R^{-\frac32}
\]
for the two contributions $J_1(R,\xi,a)$ and $J_2(R,\xi,a)$ for $\widehat{f}_{R,A}(\xi)$.
Putting everything together, we arrive at
\[
|\widehat{f}_{R,A}(\xi)|^2=\frac{1}{\pi^4|\xi|^4}
\big|J_1(\xi,R,a)+J_2(\xi,R,a)\big|^2\lesssim_a
\min\big\{1,R^{-3}|\xi|^{-3}\big\}.
\]
Since the corresponding bound holds also for $\widehat{f}_{R,B}$, we get
\[
{\sf Var}\left[n_\La(f_R)\right]=\int_{\bC\setminus\{0\}}
\big|\widehat{f}_R(\xi)\big|^2{\rm d}\rho_\La(\xi)
\lesssim_a \int_{\bC\setminus\{0\}}
\min\big\{1,R^{-3}|\xi|^{-3}\big\}{\rm d}\rho_\La(\xi),
\]
and hence the result follows by invoking the dominated convergence theorem.
\qed

\subsection*{Acknowledgements}
We would like to thank Fedor Nazarov, Alon Nishry and Ron Peled for
several very useful discussions throughout the work on this project.
We also thank the 
anonymous referees for numerous insightful suggestions.

During part of this work, the second author was based at Tel Aviv University.
He would like to express his gratitude for the excellent scientific environment provided there.

The research of M.S.\ and O.Y.\ was supported by
ERC Advanced Grant 692616, ISF Grant 1288/21 and
by BSF Grant 202019. The research of A.W.\ was
supported by the KAW foundation Grant 2017.0398, by
ERC Advanced Grant 692616 and by Grant No. 2022-03611 
from the Swedish Research Council (VR).

\subsection*{Data availability statement}
Data sharing is not applicable to this article as no datasets 
were generated or analysed during the current study.

\subsection*{Compliance with ethical standards}
The authors do not have any potential conflicts of interests to disclose.

\bigskip
\medskip

\noindent Sodin:
School of Mathematical Sciences, Tel Aviv University, Tel Aviv, Israel
\newline {\tt sodin@tauex.tau.ac.il}
\smallskip\newline\noindent{Wennman: Department of Mathematics, KTH Royal Institute of Technology, 
Stockholm, Sweden
\newline {\tt aronw@kth.se}
\smallskip\newline\noindent Yakir:
School of Mathematical Sciences, Tel Aviv University, Tel Aviv, Israel
\newline {\tt oren.yakir@gmail.com}
}

\end{document}